\setlist[enumerate,1]{label={\arabic*.}}
\declaretheorem[name=Theorem, style=plain, numberwithin=section]{satz}
\declaretheorem[name=Lemma, style=plain, sibling=satz]{lem}
\declaretheorem[name=Corollary, style=plain, sibling=satz]{kor}
\declaretheorem[name=Theorem, style=plain, numbered=no]{satzA}
\newcommand{\N}{\mathbb{N}}
\newcommand{\Z}{\mathbb{Z}}
\newcommand{\R}{\mathbb{R}}
\renewcommand{\S}{\mathbb{S}}
\newcommand{\Vol}{\operatorname{Vol}}
\renewcommand{\epsilon}{\varepsilon}
\newcommand{\Isom}{\operatorname{Isom}}
\newcommand{\id}{\operatorname{id}}
\newcommand{\injrad}{\operatorname{Inj \, Rad}}
\newcommand{\tors}{\operatorname{tors}}
\newcommand{\K}{\mathbb{K}}
\newcommand{\T}{\mathbb{T}}
\renewcommand{\H}{\mathbb{H}}
\newcommand{\FIX}{\operatorname{Fix}}
\newcommand{\im}{\operatorname{im}}
\newcommand{\grad}{\operatorname{grad}}
\begin{document}

\title{Topological complexity of visibility manifolds}

\author{Hartwig Senska}
\address{Karlsruhe Institute of Technology, Germany}
\email{hartwig.senska@kit.edu}

\begin{abstract}
A recent result of Bader, Gelander and Sauer shows that for manifolds of pinched negative curvature, the torsion part of the homology can be controlled by the volume. This is done by constructing an efficient simplicial model of the thick part, which also provides another proof of the analogous statement for the free part of the homology, a classical theorem due to Gromov.

We will extend these results to more general curvature conditions, namely the case where the sectional curvature can get arbitrarily close to zero, but the visibility axiom still holds.
\end{abstract}
\maketitle

\section{Introduction}

A classical theorem of Gromov \cite{BGS} says that for a Hadamard $n$-manifold $X$ with pinched negative sectional curvature $-1 \leq K \leq a < 0$ (for some $a < 0$) and a lattice $\Gamma < \Isom(X)$, the free part of the homology -- encoded by the Betti numbers -- grows at most linearly, i.e.
\[
b_k(X\slash\Gamma) \leq C \cdot \Vol(X\slash\Gamma)
\]
for all $k=0,\ldots,n$, where $C = C(n) > 0$ is a constant depending only on the dimension $n$ (the Betti numbers can be taken with respect to an arbitrary field). This statement can be extended to non-positive curvature $-1 \leq K \leq 0$ under some additional assumptions\footnote{Namely in the analytic case and the absence of Euclidean de Rham factors of $X$.}. As the homology splits into the free part and the torsion part, it remained an open question whether a similar statement holds true for the torsion in homology. This was answered positively by Bader, Gelander and Sauer in \cite{Sauer}: again in the pinched negative curvature situation, we have
\[
\log |\tors H_k(X\slash\Gamma; \Z)| \leq C \cdot \Vol(X\slash\Gamma)
\]
for all $k=0,\ldots,n$ and some constant $C = C(n) > 0$ depending only on $n$, although the case of degree $k=1$ in dimension $n=3$ has to be excluded; a specific counterexample for the latter case can be constructed using Dehn surgery and is explicitly given in \cite{Sauer}. While Gromov's proof for the Betti numbers is Morse theoretic, the statement for the torsion homology is a consequence of an efficient simplicial decomposition of the thick part of $X\slash\Gamma$.

In this paper, we will construct a similar efficient simplicial model of the thick part under more general curvature assumptions. Let $X$ be Hadamard with curvature $-1 \leq K < 0$ and visibility axiom (recall that pinched negative curvature $-1 \leq K \leq a < 0$ is a special case of this). For an arbitrary lattice $\Gamma < \Isom(X)$ denote the corresponding quotient manifold by $M := X\slash\Gamma$ and write $M_+$ for its thick part. Our main result says:

\begin{satzA}[see Theorem \ref{Satz:Hauptresultat_Sichtbarkeit}]
There exist constants $C = C(n) > 0$ and $D = D(n) > 0$ depending only on the dimension $n$, such that for any such manifold $M$, the pair $(M_+, \partial M_+)$ -- i.e. the thick part and its boundary -- is as a pair homotopy equivalent to a simplicial pair $(S, S')$, where the number of vertices of $S$ is bounded by $C \cdot \Vol(M)$ and the degree at the vertices of $S$ is universally bounded by $D$.
\end{satzA}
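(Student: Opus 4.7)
The plan is to extend the nerve-of-good-cover construction of Bader--Gelander--Sauer from the pinched negative setting to the visibility setting. I would first fix a Margulis-type constant $\epsilon = \epsilon(n) > 0$ calibrated so that (i) the thick part $M_+$ consists of points with $\injrad \geq 2\epsilon$, and (ii) balls of radius $\leq 5\epsilon$ around points of $M_+$ lift isometrically to the universal cover $X$. Then I would pick a maximal $\epsilon$-separated net $N \subset M_+$ (together with a sub-net $N' \subset N$ close to $\partial M_+$), and define $(S, S')$ to be the nerve of the cover $\{B(p, 2\epsilon)\}_{p \in N}$ of (a neighbourhood of) $M_+$, with $S'$ the subcomplex spanned by the vertices in $N'$.

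To apply the Nerve Lemma, each non-empty intersection $\bigcap_i B(p_i, 2\epsilon)$ arising as a simplex of $S$ must be contractible. I would use the standard lifting trick: given a common point $q \in \bigcap_i B(p_i, 2\epsilon)$, lift it to $\tilde q \in X$, and, using the injectivity radius bound, pick the unique lifts $\tilde p_i \in B(\tilde q, 2\epsilon)$. Since $K \leq 0$, the balls $B(\tilde p_i, 2\epsilon)$ are geodesically convex, so their intersection is convex and thus contractible; by injectivity of the covering on this scale it projects homeomorphically onto the downstairs intersection. A relative variant of the Nerve Lemma, combined with a uniform collar around $\partial M_+$, promotes this to the asserted pair homotopy equivalence.

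The two quantitative bounds follow by packing arguments. For the vertex count, the balls $B(p, \epsilon/2)$ for $p \in N$ lift to pairwise disjoint embedded balls in $X$; since $K \leq 0$, Bishop--Gromov comparison with Euclidean space gives a uniform lower bound $v(\epsilon/2)$ on their volume, so $|N| \cdot v(\epsilon/2) \leq \Vol(M)$. For the degree, the vertices adjacent to some $p \in N$ have lifts inside $B(\tilde p, 4\epsilon)$ which remain $\epsilon$-separated there; since $K \geq -1$, Bishop--Gromov comparison with $\H^n$ gives a uniform upper bound on $\Vol(B(\tilde p, 4\epsilon))$, hence a dimensional bound $D(n)$ on the packing number.

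The main difficulty, and what distinguishes this from the pinched argument of \cite{Sauer}, is securing a uniform Margulis-type constant $\epsilon(n) > 0$ in the visibility setting. In pinched negative curvature this is classical and the thin part decomposes neatly into Margulis tubes and cusps; without an upper bound of the form $K \leq a < 0$, the $\epsilon$-thin part is more subtle, and one needs an analogue of the Margulis lemma adapted to visibility manifolds to guarantee that the injectivity radius on $M_+$ is uniformly bounded below, that subgroups generated by short loops are virtually nilpotent, and that the lifts required above exist on the chosen scale. Once these geometric inputs are in place, the construction of the nerve, the contractibility of intersections, and the two packing estimates follow essentially as in \cite{Sauer}, the ingredients being the convexity of balls in $X$ from $K \leq 0$, Bishop--Gromov in both directions from $-1 \leq K \leq 0$, and the injectivity radius lower bound on $M_+$.
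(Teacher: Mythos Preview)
Your proposal misidentifies the obstacle and, as a consequence, hand-waves away the actual hard step.

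First, the Margulis lemma is \emph{not} the issue: as the paper records (Theorem~\ref{Samet2.1}), a uniform Margulis constant $\epsilon(n)$ exists for any Hadamard manifold with $-1\le K\le 0$, with no pinching and no visibility hypothesis. So the injectivity-radius lower bound on $M_+$, the lifting of small balls, and both packing estimates go through exactly as you sketch; none of this distinguishes the visibility case from the pinched case.

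The genuine difficulty is the one you dispatch in a single clause: ``a relative variant of the Nerve Lemma, combined with a uniform collar around $\partial M_+$''. The nerve lemma identifies the nerve with the \emph{union} $N$ of your balls, not with $M_+$. Since the centres lie in $M_+$ but the balls have radius $2\epsilon$, $N$ protrudes into the thin part, and you must produce a deformation retraction of the pair $(N,\, N\cap\text{collar})$ onto $(M_+,\partial M_+)$. A collar of $\partial M_+$ does not by itself give this: the overshoot of the balls into $M_-$ is governed by the sublevel sets $\{d_\gamma<\epsilon\}$ of many different $\gamma$, and there is no reason a naive collar retraction keeps each ball, or each finite intersection of balls, inside itself. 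If it does not, the nerve of $\{B(p,2\epsilon)\}$ need not agree with the nerve of the retracted cover, and the pair equivalence fails. This is precisely why the paper (and \cite{Sauer}) builds \emph{flow-stable} covering sets: one fixes a canonical flow toward $\partial M_+$, truncates each ball at $\partial M'_+$, and re-extends along flow lines so that the retraction literally preserves every covering set. In the pinched case \cite{Sauer} obtains the flow from commuting generators in the virtually nilpotent cusp groups; lacking that here, the paper uses the geodesic flow toward the parabolic fixed point (or hyperbolic axis), and then must repair the resulting non-contractible intersections via the cube-grid refinement of Lemma~\ref{Lem:Verfeinerung_gute_Ueberdeckung}. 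Your sketch contains no substitute for this mechanism, and without it the asserted pair homotopy equivalence $(M_+,\partial M_+)\simeq(S,S')$ is unjustified.

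A smaller calibration issue: you require balls of radius $5\epsilon$ around points of $M_+$ to lift isometrically, but points on $\partial M_+$ have injectivity radius exactly $2\epsilon$ in your normalisation, so this fails as stated. The paper avoids this by centring balls in a \emph{shrunken} thick part $M'_+$ while keeping them inside $M_+$.
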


Using the Mayer-Vietoris sequence and general properties of the thick-thin decomposition, we immediately get a linear bound on the free part in homology.

\begin{satzA}[see Theorem \ref{Satz:Freier_Anteil_beschraenkt_durch_Volumen}]
There is a constant $E = E(n) > 0$ depending only on the dimension $n$, such that for any such manifold $M$ we have
\[
b_k(M;\K) \leq E \cdot \Vol(M)
\]
for all $k \in \N_0$ and arbitrary coefficient field $\K$.
\end{satzA}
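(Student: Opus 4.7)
The plan is to combine the simplicial model from the previous theorem with a Mayer--Vietoris argument for the thick--thin decomposition $M = M_+ \cup M_-$, the two pieces meeting along $\partial M_+$ (up to a small thickening that does not change homotopy type). The strategy is that both pieces admit independent linear bounds on their Betti numbers: $M_+$ through its bounded-degree simplicial model, and $M_-$ because each of its connected components has universally bounded topology.

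First I would bound the Betti numbers of $M_+$ and of $\partial M_+$. Since $(M_+, \partial M_+) \simeq (S,S')$ as pairs with $S$ having at most $C\cdot\Vol(M)$ vertices and degree at every vertex at most $D$, every vertex is contained in at most $\binom{D}{k}$ simplices of dimension $k$, so $S$ has at most $\binom{D}{k}\cdot C\cdot\Vol(M)$ simplices of dimension $k$, and likewise for $S'$. This yields
\[
b_k(M_+;\K) \;\leq\; \dim_\K C_k(S;\K) \;\leq\; C_1(n)\cdot\Vol(M),
\]
and the analogous bound for $b_k(\partial M_+;\K)$.

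Next I would bound $b_k(M_-;\K)$. Each connected component of the thin part is either a (generalized) Margulis tube around a short closed geodesic or a cusp neighborhood, and in either case is homotopy equivalent to a compact manifold of dimension at most $n-1$ whose topology is universally bounded (essentially a nilmanifold or a mapping cylinder over one). Hence each component contributes at most $c(n)$ to $b_k(M_-;\K)$. The number of components of $M_-$ equals the number of boundary components of $M_+$, which is at most the number of connected components of $S'$, and in particular bounded by $C\cdot\Vol(M)$. Combining these two facts gives $b_k(M_-;\K) \leq C_2(n)\cdot\Vol(M)$.

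Finally, the Mayer--Vietoris sequence
\[
\cdots \to H_k(\partial M_+;\K) \to H_k(M_+;\K)\oplus H_k(M_-;\K) \to H_k(M;\K) \to H_{k-1}(\partial M_+;\K) \to \cdots
\]
gives $b_k(M;\K) \leq b_k(M_+;\K) + b_k(M_-;\K) + b_{k-1}(\partial M_+;\K)$, and summing the three linear bounds produces the desired constant $E=E(n)$; for $k>n$ the statement is trivial. The main obstacle I anticipate is the thin-part analysis: under the mere visibility hypothesis, where the sectional curvature may degenerate toward zero, one must verify that the components of $M_-$ still have the structure of Margulis-type tubes and cusps with universally bounded topology, and that the number of such components can be controlled linearly by $\Vol(M)$ via the boundary of the simplicial model. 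Once these geometric inputs are granted, the rest of the argument is formal.
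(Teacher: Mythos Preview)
Your approach is essentially the paper's, with one organizational difference. The paper first uses the thick--thin decomposition to retract the cusps, replacing $M$ by the homotopy equivalent compact manifold $M_C = M_+ \cup (\text{tubes})$, and then runs Mayer--Vietoris on that union along the tube-boundary part $\partial_{\mathcal{T}} M_+$; since every tube is homotopy equivalent to $S^1$, its Betti numbers are at most $1$, and the only remaining input is the linear bound on the number of tubes (already provided by the thick--thin decomposition, independently of the simplicial model). You instead keep all of $M_-$ in play and try to bound its components uniformly.

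The one genuine gap in your version is the assertion that each cusp component has Betti numbers bounded by a universal constant $c(n)$. In the pinched case this follows from the infranilmanifold structure of cusp cross-sections, but under the bare visibility hypothesis the paper's thick--thin decomposition only says that the cross-section $V$ is \emph{some} compact $(n-1)$-manifold, with no a priori topological bound. (Virtual nilpotence of parabolic stabilizers in this generality is precisely the Ji--Wu result the paper mentions but deliberately avoids relying on.) The fix is immediate and already implicit in your setup: each cusp deformation retracts onto its boundary, which is a union of components of $\partial M_+$, so $\sum_{\text{cusps } U} b_k(U;\K) \le b_k(\partial M_+;\K)$, and this is already linearly bounded via the simplicial model $S'$. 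With that adjustment your argument goes through; the paper's route via $M_C$ simply sidesteps the issue from the outset by making the cusps disappear before Mayer--Vietoris is applied.
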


Note that the present situation is not completely covered by either of the Gromov statements: our curvature assumptions are more general than the pinched case, and the non-positive curvature case needed analyticity. If $\operatorname{char}(\K) = 0$, the above statement is included in the results of Samet \cite{Samet}, so the interesting new information is provided for $\operatorname{char}(\K) \neq 0$.

By the same reasoning as in \cite{Sauer}, we can also deduce the corresponding statement for the torsion part of the homology.

\begin{satzA}[see Theorem \ref{Satz:Torsionsanteil_beschraenkt_durch_Volumen}]
There is a constant $F = F(n) > 0$ depending only on the dimension $n$, such that for any such manifold $M$ we have
\[
\log | \tors H_k(M;\Z) | \leq F \cdot \Vol(M)
\]
for all $k \in \N_0$, where for $n=3$ the case $k=1$ has to be excluded.
\end{satzA}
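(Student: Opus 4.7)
The plan is to adapt the strategy of Bader, Gelander and Sauer \cite{Sauer} step by step, substituting their efficient simplicial model of the thick part by the one furnished by the first Theorem A above. After replacing the pair $(M_+, \partial M_+)$ by the homotopy-equivalent simplicial pair $(S,S')$ -- which has at most $C\cdot\Vol(M)$ vertices and vertex degree universally bounded by $D$ -- the boundary operators of the associated simplicial chain complex are integer matrices whose size is linear in $\Vol(M)$ and whose entries are bounded in terms of $D$ alone. The Gabber-type estimate on elementary divisors used in \cite{Sauer} then yields, for every $k \in \N_0$,
\[
\log \lvert \tors H_k(M_+;\Z)\rvert \;\leq\; F_1(n)\cdot\Vol(M),
\]
and the same kind of bound for the relative group $\tors H_k(M_+, \partial M_+;\Z)$ and for $\tors H_k(\partial M_+;\Z)$.

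To treat the thin part, I would invoke the Margulis lemma, which still applies under our hypotheses: $M_-$ decomposes into a number of components bounded linearly in $\Vol(M)$, each of which is either a Margulis tube around a short closed geodesic or a cuspidal end. Each such component is, up to a finite cover, a fibration over a compact infra-nilmanifold of dimension at most $n-1$ of bounded geometric complexity, so that the homology torsion of each component is again controlled linearly in its volume by the same infra-nilmanifold argument as in \cite{Sauer}. The sole obstruction to this step is the Dehn-surgery counterexample recalled in \cite{Sauer}, where a flat $2$-torus cusp cross-section in dimension three can produce an unbounded contribution to $\tors H_1$; this is precisely the origin of the exclusion $(n,k) = (3,1)$ in the statement.

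It remains to assemble the pieces via the Mayer-Vietoris sequence for $M = M_+ \cup M_-$ along $\partial M_+ = M_+ \cap M_-$: since $\log\lvert\tors(\cdot)\rvert$ is subadditive on short exact sequences of finitely generated abelian groups, and the torsion of $H_\ast(M_+;\Z)$, $H_\ast(M_-;\Z)$ and $H_\ast(\partial M_+;\Z)$ has been bounded linearly in $\Vol(M)$ in the previous two steps, the same linear bound transfers to $H_\ast(M;\Z)$. In my view the main obstacle is not the thick-part bound -- which reduces essentially mechanically to the first Theorem A combined with the Gabber estimate -- but rather the verification that the structural description of $M_-$ used in \cite{Sauer} continues to apply under the weaker curvature hypothesis $-1 \leq K < 0$ together with the visibility axiom. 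Since visibility retains enough geometric control for the Margulis-type arguments, in particular the relevant Margulis constants depend only on $n$ and cusp cross-sections remain virtually nilpotent, I expect this transfer to be largely routine, but it is here that the argument genuinely has to be checked rather than merely quoted.
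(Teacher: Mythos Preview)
Your proposal is correct and follows the same route as the paper, whose own proof is simply a one-line reference: it is ``literally the same'' as the proof of \cite{Sauer} Theorem~1.2 once the efficient simplicial model of Theorem~\ref{Satz:Hauptresultat_Sichtbarkeit} is substituted. Your residual concern about whether the structural description of $M_-$ survives under the weaker visibility hypothesis is exactly what Section~\ref{Kapitel:Dick-duenn-Zerlegung} (the generalized thick-thin decomposition, culminating in Theorem~\ref{Satz:DickDuennZerlegungSichtbarkeit}) establishes, so that verification is already carried out in the paper and need only be cited.
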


A last application of the efficient simplicial model for the thick part is the following counting statement, which again is analogous to a result in \cite{Sauer}.

\begin{satzA}[see Theorem \ref{Satz:Homotopie-und_Homoeomorphietypen_zaehlen}]
Let $\mathfrak{Htp}_n(V)$ denote the number of homotopy classes of complete visibility $n$-manifolds of volume at most $V$, where $n \geq 4$. Then $\log \mathfrak{Htp}_n(V)$ grows in the magnitude of $V \cdot \log V$.
\end{satzA}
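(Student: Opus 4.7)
The statement asks for a two-sided asymptotic $\log \mathfrak{Htp}_n(V) = \Theta(V \log V)$, so the plan splits into an upper and a lower estimate, in parallel with the argument in \cite{Sauer}.

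For the upper bound I would feed the efficient simplicial model directly into a combinatorial counting argument. By the first main theorem, every visibility $n$-manifold $M$ of volume at most $V$ has a thick part $(M_+, \partial M_+)$ homotopy equivalent to a simplicial pair $(S,S')$ with at most $CV$ vertices and vertex-degree bounded by a universal constant $D$. Abstract finite simplicial complexes with $N$ vertices and degree at most $D$ fall into at most $\exp(O(N \log N))$ isomorphism classes, since each vertex can be specified by a choice among $\binom{N}{D}$ possible neighbours and the bounded-degree condition keeps the higher-dimensional incidence data under control. This yields at most $\exp(O(V \log V))$ homotopy types of pairs. To pass from $(M_+, \partial M_+)$ to $M$ up to homotopy, I would exploit that each thin component is either a Margulis tube, homotopy equivalent to its core geodesic, or a cusp neighbourhood, homotopy equivalent to its compact cross-section; both are aspherical with virtually nilpotent fundamental group and determined up to homotopy by their boundary together with bounded discrete gluing data. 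Since the number of thin components is at most linear in $V$, this attaches only another $\exp(O(V \log V))$ combinatorial factor.

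For the lower bound I would follow the template used in \cite{Sauer}: fix a closed hyperbolic $n$-manifold $N$ of small volume, which is in particular a visibility manifold of curvature in $[-1,0)$, and count finite covers of degree at most $V/\Vol(N)$. Choosing $\pi_1(N)$ arithmetic so that congruence subgroups are available, classical subgroup-growth estimates provide at least $\exp(\Omega(V \log V))$ finite-index subgroups of index bounded by $V/\Vol(N)$. Because the covering manifolds are aspherical, isomorphic fundamental groups give homotopy equivalent manifolds and vice versa; dividing out by conjugation and by $\pi_1$-isomorphisms costs only a polynomial factor, so one still recovers $\exp(\Omega(V \log V))$ distinct homotopy types of visibility $n$-manifolds of volume at most $V$.

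The main obstacle I anticipate is the bookkeeping for the thin part in the upper bound. Unlike in the pinched negative curvature setting of \cite{Sauer}, the metric structure of a thin component in a visibility manifold is less rigid, so one must verify that its homotopy type still admits a uniformly bounded combinatorial description; this should follow from the Margulis-type control furnished by the thick-thin decomposition used elsewhere in the paper, but it has to be checked explicitly. The restriction $n \geq 4$ is inherited from well-known obstructions in dimension three, where Dehn filling pathologies prevent such a clean count, the same phenomenon that forces the exclusion of $k=1$ in the torsion statement.
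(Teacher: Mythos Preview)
Your proposal is correct and follows essentially the same route as the paper, which simply invokes the proof of \cite{Sauer} Theorem~1.5 after substituting in the visibility analogues of the simplicial model (Theorem~\ref{Satz:Hauptresultat_Sichtbarkeit}) and the thick--thin decomposition (Theorem~\ref{Satz:DickDuennZerlegungSichtbarkeit}). Your anticipated obstacle about the thin components is already dissolved by Theorem~\ref{Satz:DickDuennZerlegungSichtbarkeit}: even in the visibility setting the thin pieces are still tubes ($\simeq \S^1$) or cusps retracting onto their boundary, so the combinatorial bookkeeping carries over verbatim from the pinched case.
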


A crucial tool throughout the paper will be the thick-thin decomposition of negatively curved manifolds, which we state and prove in a slightly generalized form (see Theorem \ref{Satz:DickDuennZerlegungSichtbarkeit}) in section \ref{Kapitel:Dick-duenn-Zerlegung}.

Although the classes of negatively curved visibility manifolds and of negatively pinched manifolds are not expected to be equal (up to homeomorphism or diffeomorphism), no explicit example separating these classes is known. In section \ref{Kapitel:Beispiele_Sichtbarkeit}, we will at least give examples of finite volume visibility manifolds that are not pinched, generalizing a construction of \cite{JiWu}; note that since they are constructed out of hyperbolic manifolds by perturbing the metric on the cusps, this does not provide an answer to the question at hand.

In an upcoming paper, we prove similar statements to the above for hyperbolic orbifolds. All these results -- both for visibility manifolds and for hyperbolic orbifolds -- are already available in German in \cite{SenskaDissertation}.

\subsection{Structure of the paper}

The next section \ref{Kapitel:Grundlagen} reviews some basic concepts. After that, in section \ref{Kapitel:Dick-duenn-Zerlegung}, we state and prove a slightly generalized thick-thin decomposition. Section \ref{Kapitel:Hauptresultat} contains the main result and its applications. The final section \ref{Kapitel:Beispiele_Sichtbarkeit} is dedicated to constructing examples of negatively curved visibility manifolds which do not have pinched negative curvature.

\subsection{Acknowledgement}

The results presented here are part of my doctoral thesis \cite{SenskaDissertation}. I want to express gratitude to my advisor Prof. Roman Sauer and the DFG for supporting my work via the RTG 2229.

\section{Preliminaries}\label{Kapitel:Grundlagen}

We will first fix some notation and review basic properties of negative curvature. Main references for most of the statements given in this section are \cite{BGS} and \cite{BridsonHaefliger} (see also \cite{SenskaDissertation} Kapitel 1).

An $n$-dimensional \textbf{Hadamard manifold} $X$ is a complete, simply connected Riemannian manifold of non-positive sectional curvature $K \leq 0$. By the Hadamard-Cartan theorem, such an $X$ is always diffeomorphic to $\R^n$. For any complete Riemannian manifold $M$ with $K \leq 0$, its universal cover $\widetilde{M}$ is a Hadamard manifold. Assuming further that $M$ has finite volume, we get that $M = X\slash\Gamma$, where $X = \widetilde{M}$ and $\Gamma < \Isom(X)$ is a lattice; conversely, every lattice $\Gamma < \Isom(X)$ yields such a manifold $M$.

If $X$ is Hadamard, then its distance function $d: X \times X \rightarrow [0,\infty)$ is convex; this turns out to be a remarkably powerful property. For a closed convex set $W \subseteq X$, there is a well-defined \textbf{projection} $\pi_W: X \rightarrow W$ sending a point $x \in X$ to the (unique) point $\pi_W(x) \in W$ of smallest distance to $x$; we will call $\pi_W(x)$ the \textbf{projection point} or \textbf{foot point} of $x$ in $W$. This projection is equivariant under isometries preserving $W$, i.e. if $\gamma \in \Isom(X)$ with $\gamma W = W$, then $\pi_W(\gamma x) = \gamma \pi_W(x)$ for all $x \in X$.

Many phenomena in negative curvature are related to the boundary at infinity, which in some sense describes the behavior outside of arbitrarily large compact sets. Two geodesics $c, c': \R \rightarrow X$ are \textbf{asymptotic} if there is a constant $C > 0$ such that $d(c(t), c'(t)) < C$ for all $t \geq 0$. This defines an equivalence relation on the set of geodesics in $X$, where $c(\infty)$ denotes the equivalence class of $c$ (similarly, $c(-\infty)$ denotes the equivalence class of $c$ when parametrized in the other direction); the set of equivalence classes is the \textbf{boundary at infinity} $X(\infty)$. We can endow $\overline{X} := X \cup X(\infty)$ with a topology that coincides with the usual one on $X$ and such that $X(\infty) \cong S^{n-1}$. With this topology, $\overline{X}$ is topologically a closed ball. We say that $X$ satisfies the \textbf{visibility axiom} if for every choice of different boundary points $z, z' \in X(\infty)$, there is a connecting geodesic $c: \R \rightarrow X$ with $z = c(\infty)$, $z' = c(-\infty)$. A manifold whose universal cover satisfies the visibility axiom is a \textbf{visibility manifold}. If the sectional curvature is bounded away from 0 (i.e. $K \leq a < 0$ for some $a < 0$), the visibility axiom is satisfied. From now on, we will only study visibility manifolds, where additionally $K<0$.

We can try to extend the concept of balls around points $x \in X$ to points $z \in X(\infty)$ at infinity; this leads to the notion of horoballs. Let $c$ be a geodesic with $c(\infty) = z$ and define the \textbf{Busemann function} $h_c: X \rightarrow \R$ via $h_c(x) := \lim_{t\rightarrow\infty} (d(x,c(t)) - t)$, which is well-defined, convex and $C^2$. An open \textbf{horoball} around $z$ is a sublevel set $HB = \{ h_c < a \}$ (for some $a \in \R$); similarly, $\{ h_c \leq a \}$ is a closed horoball. Their boundary $HS := \partial HB = h_c^{-1}(\{a\})$ is called \textbf{horosphere}. Each geodesic $c'$ with endpoint $c'(\infty) = z$ intersects each horosphere around $z$ orthogonally in a unique point.

Every isometry $\gamma \in \Isom(X)$ gives rise to a \textbf{displacement function}
\[
d_{\gamma}: X \rightarrow [0,\infty), \qquad x \mapsto d_{\gamma}(x) := d(x, \gamma x).
\]
We can classify the nontrivial isometries of $\Isom(X)$ by the behavior of their displacement functions: $\gamma$ is \textbf{elliptic} if $d_{\gamma}$ has minimum $0$; it is \textbf{hyperbolic} if $d_{\gamma}$ has minimum $>0$; and it is \textbf{parabolic} if $d_{\gamma}$ has no minimum. If $M = X\slash\Gamma$ is a manifold, then $\Gamma < \Isom(X)$ contains no elliptic isometries (and is thus torsion-free); if $M$ is non-compact, there has to be a parabolic $\gamma \in \Gamma$. The different isometry types are stable under taking powers (with powers $\neq 0$). From now on, we will always exclude elliptic isometries, as we are only interested in manifolds. In case $X$ satisfies the visibility axiom, we can alternatively classify the isometries via their fixed points in $X(\infty)$: hyperbolic isometries have precisely two fixed points in $X(\infty)$, whereas parabolic isometries have precisely one. For a hyperbolic $\gamma$, the minimal set $\{ d_{\gamma} = \min d_{\gamma} \}$ consists of geodesics between the two boundary fixed points, and $\gamma$ acts via translation on them; if $K<0$ there is only one such geodesic, namely the \textbf{axis} of $\gamma$. In a discrete subgroup $G < \Isom(X)$, parabolic and hyperbolic isometries can not have common fixed points; moreover, for two hyperbolic isometries in such a discrete $G$, if they have one common fixed point, their second fixed point also has to coincide.

Sublevel sets $\{ d_{\gamma} < a \}$ (for some $a \in \R$) of the displacement function will play a prominent role in this paper. Note that these are convex sets and we have $\gamma' \{ d_{\gamma}  < a \} = \{ d_{\gamma' \gamma \gamma'^{-1}} < a \}$; moreover, $\inf_{x \in X} d_{\gamma}(x) = \inf_{x \in X} d_{\gamma' \gamma \gamma'^{-1}}(x)$. For a set $A \subseteq X$ and $r \geq 0$, we will use $(A)_r = \{ x \in X : d(x,A) < r \}$ to denote the open $r$-neighborhood of $A$.

We will finish with some monotonicity statements. If $\gamma \in \Isom(X)$ is parabolic with parabolic fixed point $z \in X(\infty)$ and $c$ a geodesic with $c(\infty) = z$, then the map $t \mapsto d_{\gamma}(c(t))$ is strictly decreasing; note that in our case of $K < 0$ with visibility axiom, the limit for $t \rightarrow \infty$ is not necessarily $0$ (whereas in the $K \leq a < 0$ case it indeed is). Similarly, for a hyperbolic isometry $\gamma$ with axis $A$ and a geodesic ray $c$ from $\pi_A(x) \in A$ to $x \notin A$, the function $t \mapsto d_{\gamma}(c(t))$ is strictly increasing with limit $\infty$ for $t \rightarrow \infty$. Using this monotonicity, we immediately see that a (nonempty) sublevel set $\{ d_{\gamma} < a \}$ is a tubular neighborhood of the axis $A$, and is contained in the $r$-neighborhood $(A)_r$ of $A$ for some $r > 0$.

\section{Thick-thin decomposition}\label{Kapitel:Dick-duenn-Zerlegung}

In this section we will give a proof of a slightly generalized thick-thin decomposition, where we might weight different isometries differently. This construction was already used in the pinched curvature case in \cite{Sauer}, but without proof. The result will be no news to the expert and can easily be skipped, as the statements are basically identical to the ones e.g. in \cite{BGS} chapter 10, \cite{BenPet} chapter D or \cite{Bowditch} chapter 3.5, and the proofs are almost so: we just have to account for the more general curvature assumptions and the variable weighting of the isometries. Thus, we see this section more as a convenient and concise summary and not as novel result in itself.

Let $X$ be an $n$-dimensional Hadamard manifold with sectional curvature $-1 \leq K < 0$ and visibility axiom. Moreover, $\Gamma $ is a torsion-free lattice in $\Isom(X)$ and $M := X\slash\Gamma$, i.e. $M$ is a complete, finite volume visibility manifold with $-1 \leq K < 0$ and $\pi_1(M) \cong \Gamma$.

Let $\epsilon(n)$ be the Margulis $\epsilon$ (see Theorem \ref{Samet2.1} below), $\epsilon \in (0, \epsilon(n)/2]$ fixed and choose a conjugation invariant assignment $\Gamma \setminus \{\id\} \rightarrow [\epsilon, \epsilon(n)/2]$, $\gamma \mapsto \epsilon_{\gamma}$. The \textbf{thin part} $X_-$ of $X$ is then defined as
\[
X_- := \bigcup_{\gamma \in \Gamma \setminus\{\id\}} \{ d_{\gamma} < \epsilon_{\gamma} \}
\]
and the \textbf{thick part} as its complement $X_+ := X \setminus X_-$. Note that $X_-$ also depends on $\Gamma$ and the assignment $\gamma \mapsto \epsilon_{\gamma}$, although this is omitted in the notation. Similarly, we call $M_- := X_-\slash\Gamma$ the \textbf{thin part} of $M$ and its complement $M_+ := X_+\slash\Gamma$ the \textbf{thick part} (this is well-defined by Lemma \ref{Lem:TransformationNiveaumengen} and \ref{Lem:BGS10.2Analogon} below). The function
\[
d_{\Gamma}: X \rightarrow \R, \qquad x \mapsto d_{\Gamma}(x) := \inf\limits_{\gamma \in \Gamma \setminus \{\id\}} d_{\gamma}(x)
\]
gives us the useful relation $d_{\Gamma}(x) = 2 \cdot \injrad_M(\pi(x))$ between the displacement in $X$ and the injectivity radius in $M$, where $\pi: X \rightarrow X\slash\Gamma = M$ denotes the projection. Finally, recall that for a subset $S \subseteq \overline{X}$, we write $\Gamma_S = \{ \gamma \in \Gamma : \gamma S = S \}$ for the setwise stabilizer group.

The crucial tool in the thick-thin decomposition is the Margulis lemma.

\begin{satz}[Margulis lemma; \cite{Samet} Theorem 2.1]\label{Samet2.1}
There are constants $\epsilon(n) > 0$ and $m(n) \in \N$ depending only on $n$, such that if $X$ is an $n$-dimensional Hadamard manifold with sectional curvature $-1 \leq K \leq 0$, then for every discrete group $\Gamma < \Isom(X)$, every $x \in X$ and every $\epsilon \leq \epsilon(n)$, the group
\[
\Gamma_{\epsilon}(x) := \langle \{ \gamma \in \Gamma : d_{\gamma}(x) < \epsilon \} \rangle
\]
contains a nilpotent normal subgroup $N$ of index $\leq m(n)$. If $\Gamma_{\epsilon}(x)$ is finite, then $N$ is abelian.
\end{satz}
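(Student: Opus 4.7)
The strategy is the classical Kazhdan-Margulis-Zassenhaus scheme, in the form extended to general Hadamard manifolds by Samet. Three ingredients drive the argument: (i) the Lie group structure of $G := \Isom(X)$ together with compactness of the point stabilizer $K_x := \{g \in G : gx = x\}$, which is a closed subgroup of $O(n)$; (ii) the Zassenhaus theorem for Lie groups, which furnishes a symmetric open neighborhood $U$ of $e \in G$ such that, for every discrete subgroup $\Delta < G$, the subgroup $\langle \Delta \cap U \rangle$ is nilpotent -- and abelian when additionally precompact; (iii) a uniform finite covering of $K_x$ by left-translates of $U$. All three data may be chosen depending only on $n$, since the Lie-theoretic germ at $x$ is modeled on $O(n) \ltimes \R^n$ independently of the concrete Hadamard manifold.

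\textbf{Key steps.} First I would translate the displacement at $x$ into proximity in $G$: for any decomposition $g = ku$ with $k \in K_x$ one has $d_g(x) = d_u(x)$, and a $K \geq -1$ comparison in a left-invariant Riemannian metric on $G$ shows that $d_g(x) < \epsilon$ forces $g \in K_x \cdot V$ for a neighborhood $V$ of $e$ shrinking quantitatively with $\epsilon$. Next, I would fix a symmetric $U$ enjoying the Zassenhaus property and cover $K_x$ by finitely many translates $k_1 U, \ldots, k_{m_0(n)} U$; the number $m_0(n)$ is uniform in $n$. Then I choose $\epsilon(n)$ so small that every generator $\gamma \in \Gamma$ of $\Gamma_\epsilon(x)$ -- i.e.\ every $\gamma$ with $d_\gamma(x) < \epsilon(n)$ -- lies in some $k_i W$ with $WW^{-1} \subseteq U$. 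Two such generators $\gamma, \gamma'$ of the same type $i$ then satisfy $\gamma^{-1}\gamma' \in \Gamma \cap U$, so $N' := \langle \Gamma \cap U \rangle \cap \Gamma_\epsilon(x)$ is nilpotent by Zassenhaus, and $\Gamma_\epsilon(x)$ is covered by at most $m_0(n)$ cosets of $N'$. Finally, the normal core $N := \bigcap_{\gamma \in \Gamma_\epsilon(x)} \gamma N' \gamma^{-1}$ inherits nilpotency from $N'$, is normal by construction, and has index at most $m_0(n)! =: m(n)$ via the standard action of $\Gamma_\epsilon(x)$ on its coset space modulo $N'$.

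\textbf{Finite case and main obstacle.} When $\Gamma_\epsilon(x)$ is finite, $N$ is a finite discrete subgroup inside the Zassenhaus neighborhood; the compact-group variant of the Zassenhaus theorem then places it inside a torus of $G$, forcing $N$ to be abelian. The step I expect to require the most care is the calibration tying together $\epsilon(n)$, the window $W$, the inclusion $WW^{-1} \subseteq U$, and the covering number $m_0(n)$: all must be prescribed in terms of $n$ alone, using the $O(n)$-structure of $K_x$ and the Hadamard-Cartan identification $X \cong \R^n$. Once this uniformity is in place, the pigeonhole-and-normal-core step becomes essentially formal.
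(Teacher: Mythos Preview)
The paper does not supply a proof of this theorem at all: it is quoted verbatim as \cite{Samet} Theorem~2.1 and used as a black box throughout. So there is no ``paper's own proof'' to compare against; your sketch is an attempt to prove a result the paper merely imports.

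That said, your outline is the right one in spirit --- the Zassenhaus neighborhood plus pigeonhole-over-$K_x$ plus normal-core argument is indeed how the Margulis lemma is established --- but be aware that the genuinely delicate point is exactly the one you flag as the ``main obstacle'', and it is more serious than you let on. The isometry group $G=\Isom(X)$ depends on the particular Hadamard manifold $X$, so you cannot literally fix a single Zassenhaus neighborhood $U\subset G$ once and for all. The uniform dependence on $n$ comes not from a single Lie group but from working with the \emph{linearization} at $x$: one associates to each $\gamma$ its rotational part $r(\gamma)\in O(T_xX)\cong O(n)$ and its translational part $d_\gamma(x)$, and runs the Zassenhaus/pigeonhole argument inside $O(n)$ (a fixed compact Lie group, independent of $X$), using curvature comparison to control how commutators of small-displacement isometries behave. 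Your phrase ``the Lie-theoretic germ at $x$ is modeled on $O(n)\ltimes\R^n$'' gestures at this, but making it precise is the entire content of the proof --- see Buser--Karcher or \cite{BGS} \S8 for the geometric version, or \cite{Samet} for the formulation you state. As written, your step ``fix a symmetric $U$ enjoying the Zassenhaus property'' presupposes what has to be shown.
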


The constants $\epsilon(n)$ and $m(n)$ in Theorem \ref{Samet2.1} will be called Margulis $\epsilon$ and Margulis index constant, respectively.

Sublevel sets of the displacement function behave as follows, which shows why we need the conjugation invariance of the assignment $\gamma \mapsto \epsilon_{\gamma}$.

\begin{lem}\label{Lem:TransformationNiveaumengen}
Let $\gamma, \gamma' \in \Isom(X)$ and $a \geq 0$. Then $\gamma \{ d_{\gamma'} < a \} = \{ d_{\gamma \gamma' \gamma^{-1}} < a \}$ and similarly for $\{ d_{\gamma'} \leq a \}$.
\end{lem}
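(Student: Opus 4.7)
The plan is to verify the equality of sets pointwise by computing directly how the displacement function transforms under conjugation by an isometry. The key input is that $\gamma$ preserves the distance function on $X$, which immediately turns the conjugation identity into a statement about displacement values.

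More concretely, I would fix $x \in X$ and show that $x \in \gamma\{d_{\gamma'} < a\}$ if and only if $x \in \{d_{\gamma\gamma'\gamma^{-1}} < a\}$. The first condition is equivalent to $d_{\gamma'}(\gamma^{-1}x) < a$, i.e. $d(\gamma^{-1}x, \gamma'\gamma^{-1}x) < a$. Applying the isometry $\gamma$ to both arguments gives
\[
d(\gamma^{-1}x, \gamma'\gamma^{-1}x) \;=\; d(x, \gamma\gamma'\gamma^{-1}x) \;=\; d_{\gamma\gamma'\gamma^{-1}}(x),
\]
so the second condition drops out. The argument for $\{d_{\gamma'} \leq a\}$ is identical with strict inequalities replaced by weak ones.

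There is essentially no obstacle here; the statement is really just the observation that $d_{\gamma\gamma'\gamma^{-1}}(x) = d_{\gamma'}(\gamma^{-1}x)$ for all $x \in X$, which is the reason why a conjugation-invariant choice of thresholds $\gamma \mapsto \epsilon_\gamma$ is the correct notion to make $X_-$ (and hence $M_-$) well-defined as a $\Gamma$-invariant subset of $X$. I would therefore keep the write-up to a short calculation and remark on this interpretation, since it motivates the conjugation-invariance hypothesis used in the definition of the generalized thin part.
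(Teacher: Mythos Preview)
Your argument is correct and is exactly the natural one-line computation; the paper in fact states this lemma without proof (it is already mentioned as a basic fact in the preliminaries), so your write-up is if anything more detailed than what the paper provides.
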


The following lemma serves as an analog for \cite{BGS} Lemma 10.2.

\begin{lem}\label{Lem:BGS10.2Analogon}
Let $W \subseteq X_-$ be a connected component of $X_-$. Then:
\begin{enumerate}
\item $W$ is precisely invariant under $\Gamma$, i.e. for $\gamma \in \Gamma$ either $\gamma W = W$ or $\gamma W \cap W = \emptyset$.

\item If $\gamma \in \Gamma$ and $x\in W$ such that $d_{\gamma}(x) < \epsilon_{\gamma}$, then $\gamma \in \Gamma_W = \{ \gamma \in \Gamma : \gamma W = W \}$.
\end{enumerate}
\end{lem}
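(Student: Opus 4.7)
The plan is to first establish that $\Gamma$ preserves $X_-$ setwise, which reduces both parts to essentially bookkeeping with connected components and convex sublevel sets.

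To see that $\gamma' X_- = X_-$ for every $\gamma' \in \Gamma$, I would apply Lemma \ref{Lem:TransformationNiveaumengen} to each sublevel set in the defining union: $\gamma' \{ d_{\gamma} < \epsilon_{\gamma} \} = \{ d_{\gamma' \gamma \gamma'^{-1}} < \epsilon_{\gamma} \}$. The hypothesis that $\gamma \mapsto \epsilon_{\gamma}$ is conjugation invariant, i.e.\ $\epsilon_{\gamma} = \epsilon_{\gamma' \gamma \gamma'^{-1}}$, is precisely what converts the right-hand side into another one of the defining sublevel sets of $X_-$. Hence $\gamma'$ acts as a homeomorphism of $X_-$ and so permutes its connected components. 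In particular $\gamma W$ is itself a connected component of $X_-$ for every $\gamma \in \Gamma$, and two connected components are either equal or disjoint; this yields the dichotomy in Part 1.

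For Part 2, the key observation is that since $X$ is Hadamard, the displacement function $d_{\gamma}$ is convex, so each sublevel set $\{ d_{\gamma} < \epsilon_{\gamma} \}$ is convex and in particular connected. Given the point $x \in W$ with $d_{\gamma}(x) < \epsilon_{\gamma}$, the set $\{ d_{\gamma} < \epsilon_{\gamma} \}$ is a connected subset of $X_-$ meeting the connected component $W$ at $x$, hence it is contained in $W$. Applying $\gamma$ and using Lemma \ref{Lem:TransformationNiveaumengen} with the trivial identity $\gamma \gamma \gamma^{-1} = \gamma$, I obtain $\gamma \{ d_{\gamma} < \epsilon_{\gamma} \} = \{ d_{\gamma} < \epsilon_{\gamma} \} \subseteq W$. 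Therefore $\gamma W \cap W$ contains $\{ d_{\gamma} < \epsilon_{\gamma} \} \ni x$, and Part 1 forces $\gamma W = W$, i.e.\ $\gamma \in \Gamma_W$.

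There is no real obstacle here: once the convexity of displacement sublevel sets and the conjugation invariance of the weights are brought to bear, the statement follows formally. The only mild point of care is to apply the identity in Lemma \ref{Lem:TransformationNiveaumengen} so that the assignment $\gamma \mapsto \epsilon_{\gamma}$ pairs up correctly under conjugation; this is exactly why that invariance hypothesis was imposed in the definition of $X_-$.
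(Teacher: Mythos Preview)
Your proof is correct and follows essentially the same approach as the paper. The only cosmetic difference is in Part~2: the paper tracks the single point $\gamma x$ (noting $d_{\gamma}(\gamma x) = d_{\gamma}(x) < \epsilon_{\gamma}$ and using convexity of the sublevel set to connect $x$ to $\gamma x$ inside $W$), whereas you observe that the entire sublevel set $\{d_{\gamma} < \epsilon_{\gamma}\}$ lies in $W$ and is $\gamma$-invariant; both are the same idea.
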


\begin{proof}
\begin{enumerate}
\item Let $\gamma_0 \in \Gamma$. By Lemma \ref{Lem:TransformationNiveaumengen} and the conjugation invariance of $\gamma \mapsto \epsilon_{\gamma}$, we get
\begin{align*}
\gamma_0 \bigcup\limits_{\gamma \in \Gamma \setminus \{\id\}} \{ d_{\gamma} < \epsilon_{\gamma} \} &= \bigcup\limits_{\gamma \in \Gamma \setminus \{\id\}} \{ d_{\gamma_0 \gamma \gamma_0^{-1}} < \epsilon_{\gamma} \} \\
&= \bigcup\limits_{\gamma \in \Gamma \setminus \{\id\}} \{ d_{\gamma_0 \gamma \gamma_0^{-1}} < \epsilon_{\gamma_0 \gamma \gamma_0^{-1}} \} \\
&= \bigcup\limits_{\gamma' \in \Gamma \setminus \{\id\}} \{ d_{\gamma'} < \epsilon_{\gamma'} \},
\end{align*}
so $\gamma_0 X_- = X_-$. Thus connected components will be mapped to connected components and $W$ is precisely invariant under $\Gamma$.

\item By assumption, $d_{\gamma} (\gamma x) = d_{\gamma}(x) < \epsilon_{\gamma}$, so $\gamma x \in \{ d_{\gamma} < \epsilon_{\gamma} \}$. Since this set is convex, the geodesic from $x$ to $\gamma x$ is also contained in it; thus $x$ and $\gamma x$ lie in the same connected component, i.e. $\gamma x \in W$. So $\gamma W \cap W \supseteq \{ \gamma x \} \neq \emptyset$ and by 1. we conclude $\gamma W = W$.
\end{enumerate}
\end{proof}

In particular, $\pi(W) \cong W\slash\Gamma_W$ for any connected component $W$ of $X_-$.

Virtually nilpotent groups only contain one type of isometries:

\begin{lem}[\cite{Eberlein} Lemma 3.1b]\label{Eberlein3.1b}
Let $N < \Gamma$ be virtually nilpotent. Then $N \subseteq \Gamma_z = \{ \gamma \in \Gamma : \gamma z = z \}$ for some $z \in X(\infty)$. In particular, nontrivial elements of $N$ are either all hyperbolic or all parabolic. Furthermore, $\FIX(\gamma) = \FIX(\gamma')$ for all nontrivial $\gamma, \gamma' \in N$, where $\FIX(\gamma) := \{ z \in X(\infty) : \gamma z = z \}$.
\end{lem}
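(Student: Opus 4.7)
The plan is to reduce to a nilpotent finite-index subgroup, locate a nontrivial central element there, and use it as an anchor whose boundary fixed-point set is preserved under conjugation by every element of the subgroup. First I would pick a normal, nilpotent, finite-index $N_0 \triangleleft N$, available as the normal core of any nilpotent finite-index subgroup of $N$. If $N_0$ were trivial then $N$ would be finite, hence trivial by torsion-freeness of $\Gamma$; so I may assume $N_0 \neq \{\id\}$, and its center then contains a nontrivial element $\gamma_0$. Since $\Gamma$ is torsion-free, $\gamma_0$ is either parabolic or hyperbolic, and every $\gamma \in N_0$ commutes with $\gamma_0$ and therefore preserves $\FIX(\gamma_0)$ setwise.

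In the parabolic case $\FIX(\gamma_0) = \{z\}$ is a singleton, so automatically $\gamma z = z$ for every $\gamma \in N_0$. In the hyperbolic case $\FIX(\gamma_0) = \{z_1, z_2\}$ and each $\gamma$ either fixes both $z_i$ or swaps them; the swap case would force $\gamma$ to act on the axis of $\gamma_0$ as an orientation-reversing isometry of $\R$, hence to have a fixed point on the axis, contradicting torsion-freeness of $\Gamma$. Thus in either case $N_0 \subseteq \Gamma_z$ for some $z \in X(\infty)$. The remaining assertions for $N_0$ then follow from the discrete-group properties recalled in the preliminaries: any nontrivial $\gamma \in N_0$ shares $z$ with $\gamma_0$, so $\gamma$ has the same isometry type as $\gamma_0$ (parabolic and hyperbolic never share a fixed point in a discrete group); in the hyperbolic case, hyperbolics sharing one fixed point share both, yielding $\FIX(\gamma) = \FIX(\gamma_0)$, and in the parabolic case $\FIX(\gamma) = \{z\}$ by uniqueness of the parabolic fixed point.

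To finally upgrade from $N_0$ to all of $N$, I would use that $[N : N_0] < \infty$, so for every $\gamma \in N$ some power $\gamma^k$ with $k \geq 1$ lies in $N_0$. Powers preserve isometry type, so $\gamma$ has the same type as $\gamma^k$ (and hence as $\gamma_0$), and $\gamma$ permutes $\FIX(\gamma^k) = \FIX(\gamma_0)$; the swap case is again ruled out by torsion-freeness, so $\gamma$ fixes at least one point of $\FIX(\gamma_0)$, and the discrete-group arguments from before give $\FIX(\gamma) = \FIX(\gamma_0)$. I expect the single substantive step to be the swap-to-elliptic argument, which is the one place where torsion-freeness rather than mere discreteness is actually used, and it needs to be invoked both for $N_0$ and when bootstrapping to $N$.
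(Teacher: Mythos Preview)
The paper does not supply its own proof of this lemma; it is quoted verbatim from Eberlein (\cite{Eberlein}, Lemma 3.1b) and used as a black box. So there is no in-paper argument to compare against.

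Your argument is correct and is essentially the standard one. A couple of minor remarks: in the final step you do not actually need to rerun the swap-to-elliptic trick. Once you know $\gamma^k \in N_0 \setminus \{\id\}$, you already have $\FIX(\gamma) \subseteq \FIX(\gamma^k) = \FIX(\gamma_0)$ (any boundary fixed point of $\gamma$ is fixed by every power), and since $\gamma$ and $\gamma^k$ have the same isometry type, their fixed-point sets have the same cardinality, forcing equality. Also, your appeal to torsion-freeness to exclude the swap is exactly right and is indeed the one place where discreteness alone would not suffice; note that the paper's standing hypothesis $K<0$ is what guarantees uniqueness of the geodesic between the two endpoints, so that ``preserves $\{z_1,z_2\}$'' really does imply ``preserves the axis''.
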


Because of the following lemma, we will always choose $\gamma \mapsto \epsilon_{\gamma}$ in such a way that $\epsilon_{\gamma} \in [\epsilon, \epsilon(n)/2]$.

\begin{lem}[\cite{Eberlein} Lemma 3.1c]\label{Eberlein3.1c}
Let $A \subseteq X$ be path connected and such that $d_{\Gamma}(x) \leq \epsilon(n)/2$ for all $x \in A$. If $\gamma, \gamma' \in \Gamma$ are nontrivial with $d_{\gamma}(x) \leq \epsilon(n)/2$ and $d_{\gamma'}(x') \leq \epsilon(n)/2$ for suitable points $x, x' \in A$, then $\FIX(\gamma) = \FIX(\gamma')$.
\end{lem}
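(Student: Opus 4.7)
The plan is to transport the fixed-point condition from $x$ to $x'$ along a path in $A$, using the Margulis lemma to control overlaps of sublevel sets at nearby points. Since $A$ is path connected, fix a continuous path $\alpha: [0,1] \to A$ with $\alpha(0)=x$ and $\alpha(1)=x'$. By uniform continuity of $\alpha$ on a compact interval, I can choose a partition $0 = t_0 < t_1 < \ldots < t_N = 1$ so fine that $d(\alpha(t_i), \alpha(t_{i+1})) < \epsilon(n)/4$ for every $i$.

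At each intermediate time $t_i$ (i.e.\ $0 < i < N$), the hypothesis $d_\Gamma(\alpha(t_i)) \leq \epsilon(n)/2$ produces a nontrivial $\eta_i \in \Gamma$ with $d_{\eta_i}(\alpha(t_i)) \leq \epsilon(n)/2$. At the endpoints I set $\eta_0 := \gamma$ and $\eta_N := \gamma'$, which by the assumption on the chosen points $x,x'$ also satisfy $d_{\eta_i}(\alpha(t_i)) \leq \epsilon(n)/2$. The goal is to show $\FIX(\eta_i) = \FIX(\eta_{i+1})$ for each $i$, for then the chain of equalities yields $\FIX(\gamma) = \FIX(\gamma')$.

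To carry out the inductive step I compare $\eta_i$ and $\eta_{i+1}$ at the single point $y := \alpha(t_{i+1})$. The standard triangle-inequality estimate $d_\eta(y) \leq d_\eta(y') + 2 d(y,y')$ (valid for any isometry $\eta$) gives
\[
d_{\eta_i}(y) \leq d_{\eta_i}(\alpha(t_i)) + 2 d(\alpha(t_i), y) < \tfrac{\epsilon(n)}{2} + 2 \cdot \tfrac{\epsilon(n)}{4} = \epsilon(n),
\]
and of course $d_{\eta_{i+1}}(y) \leq \epsilon(n)/2 < \epsilon(n)$. Thus both $\eta_i$ and $\eta_{i+1}$ belong to the generating set of $\Gamma_{\epsilon(n)}(y)$, which is virtually nilpotent by the Margulis Lemma (Theorem \ref{Samet2.1}). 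Lemma \ref{Eberlein3.1b} then forces $\FIX(\eta_i) = \FIX(\eta_{i+1})$, completing the induction.

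The only subtle point I anticipate is the interplay between the strict inequality in the definition of $\Gamma_\epsilon(\cdot)$ and the non-strict bound $\leq \epsilon(n)/2$ in the hypotheses; this is handled by leaving the slack factor $\epsilon(n)/4$ in the perturbation, so that after translating $\eta_i$ to the neighbouring point we still land strictly below $\epsilon(n)$. Everything else is a direct chaining argument and imposes no further difficulty.
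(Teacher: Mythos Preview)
Your argument is correct and is essentially the standard proof of this fact (the one in Eberlein's paper). Note, however, that the present paper does not supply its own proof of this lemma: it is simply quoted from \cite{Eberlein} Lemma 3.1c, so there is nothing in the paper to compare your approach against beyond the citation itself.

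One small point worth making explicit for completeness: when you produce the intermediate isometries $\eta_i$ from the hypothesis $d_\Gamma(\alpha(t_i)) \leq \epsilon(n)/2$, you are using that the infimum defining $d_\Gamma$ is actually attained. This holds because $\Gamma$ acts properly discontinuously, so for each point only finitely many group elements have displacement below any fixed bound; hence the infimum is a minimum. With that remark your chaining argument goes through exactly as written.
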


\begin{kor}\label{Kor:Eberlein3.1cAnalogon}
Let $W$ be a connected component of $X_-$ and assume the assignment $\gamma \mapsto \epsilon_{\gamma}$ is chosen such that $\epsilon_{\gamma} \in [\epsilon, \epsilon(n)/2]$ for all $\gamma \in \Gamma \setminus \{\id\}$. If $\gamma, \gamma' \in \Gamma$ are nontrivial with $d_{\gamma}(x) < \epsilon_{\gamma}$ and $d_{\gamma'}(x') < \epsilon_{\gamma'}$ for suitable points $x, x' \in W$, then $\FIX(\gamma) = \FIX(\gamma')$.
\end{kor}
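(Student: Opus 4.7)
The plan is to reduce the statement directly to Lemma \ref{Eberlein3.1c} by taking $A := W$. Two things need to be verified: that $W$ is path connected, and that $d_\Gamma(x) \leq \epsilon(n)/2$ holds for every $x \in W$. Once these are in place, the two pointwise bounds $d_\gamma(x) < \epsilon_\gamma \leq \epsilon(n)/2$ and $d_{\gamma'}(x') < \epsilon_{\gamma'} \leq \epsilon(n)/2$ supplied by the hypothesis feed directly into Lemma \ref{Eberlein3.1c} and yield $\FIX(\gamma) = \FIX(\gamma')$.

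For path connectedness: $X_-$ is the union of the open sublevel sets $\{d_\gamma < \epsilon_\gamma\}$, hence open. A connected component of an open subset of a manifold is open and connected, and therefore path connected. So $W$ qualifies as the set $A$ required by Lemma \ref{Eberlein3.1c}.

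For the displacement bound on $W$: any $x \in W \subseteq X_-$ lies in some sublevel set $\{d_\eta < \epsilon_\eta\}$ with $\eta \in \Gamma \setminus \{\id\}$. By the standing assumption $\epsilon_\eta \leq \epsilon(n)/2$, so
\[
d_\Gamma(x) = \inf_{\eta' \in \Gamma \setminus \{\id\}} d_{\eta'}(x) \leq d_\eta(x) < \epsilon_\eta \leq \epsilon(n)/2.
\]
This is precisely the hypothesis on $A$ required by Lemma \ref{Eberlein3.1c}.

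With these two observations, applying Lemma \ref{Eberlein3.1c} to $A = W$ and the given pair $(\gamma,\gamma')$ gives $\FIX(\gamma) = \FIX(\gamma')$. There is no real obstacle here; the only subtlety worth spelling out is the use of the upper bound $\epsilon_\gamma \leq \epsilon(n)/2$, which is exactly the reason why the chosen assignment $\gamma \mapsto \epsilon_\gamma$ is required to land in $[\epsilon,\epsilon(n)/2]$ and not in a larger interval. This is precisely the role the corollary is designed to play as an analog of Lemma \ref{Eberlein3.1c} in the thick-thin framework with variable weights.
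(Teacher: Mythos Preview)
Your proof is correct and is exactly the argument the paper intends: the corollary is stated without proof precisely because it follows immediately from Lemma~\ref{Eberlein3.1c} by taking $A = W$, and your two verifications (path connectedness of $W$ as an open connected subset of a manifold, and the bound $d_\Gamma \leq \epsilon(n)/2$ on $W$ coming from $\epsilon_\eta \leq \epsilon(n)/2$) are the only things that need checking.
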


From now on, $W$ will always denote a connected component of the thin part $X_-$. Similar to virtually nilpotent groups, stabilizer groups of such a $W$ only contain one type of isometries.

\begin{lem}\label{Lem:Gamma_W_hyperbolisch_parabolisch}
The nontrivial elements of $\Gamma_W$ are either all hyperbolic with common axis $A$ or all parabolic with common fixed point $z \in X(\infty)$.
\end{lem}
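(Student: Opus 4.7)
The plan is to locate a single ``witness'' isometry of $W$ and show, via conjugation invariance of $\gamma \mapsto \epsilon_\gamma$, that every element of $\Gamma_W$ must normalize its set of boundary fixed points; a short case analysis then forces all nontrivial elements of $\Gamma_W$ to share both type and fixed points.

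First I would fix an arbitrary base point $x_0 \in W$. Since $W \subseteq X_-$, there is some nontrivial $\gamma_0 \in \Gamma$ with $d_{\gamma_0}(x_0) < \epsilon_{\gamma_0}$, and Lemma \ref{Lem:BGS10.2Analogon}(2) gives $\gamma_0 \in \Gamma_W$. As $\Gamma$ is torsion-free, $\gamma_0$ is hyperbolic or parabolic; I set $F := \FIX(\gamma_0)$, which has two or one elements accordingly.

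The key step is to show $\gamma F = F$ for every nontrivial $\gamma \in \Gamma_W$. For such $\gamma$, the point $\gamma x_0$ lies in $\gamma W = W$, and by isometry invariance of the displacement together with the conjugation invariance of the assignment,
\[
d_{\gamma \gamma_0 \gamma^{-1}}(\gamma x_0) = d_{\gamma_0}(x_0) < \epsilon_{\gamma_0} = \epsilon_{\gamma \gamma_0 \gamma^{-1}}.
\]
So $\gamma_0$ and $\gamma \gamma_0 \gamma^{-1}$ are both witnessed at points of the connected set $W$; Corollary \ref{Kor:Eberlein3.1cAnalogon} then yields $F = \FIX(\gamma \gamma_0 \gamma^{-1}) = \gamma F$ as desired.

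Finally I would conclude by cases. If $\gamma_0$ is parabolic with $F = \{z\}$, then $\gamma z = z$, and because parabolic and hyperbolic elements of a discrete group cannot share a boundary fixed point (recalled in the preliminaries), $\gamma$ must be parabolic with fixed point $z$. If $\gamma_0$ is hyperbolic with $F = \{z_+, z_-\}$ and axis $A$, then $\gamma$ permutes $\{z_+, z_-\}$; the main obstacle here is ruling out the ``swap'' scenario, and this is handled uniformly by passing to $\gamma^2$, which is nontrivial by torsion-freeness, of the same isometry type as $\gamma$, and fixes both $z_+$ and $z_-$ pointwise. A parabolic $\gamma$ is then excluded because $\gamma^2$ would be parabolic with two boundary fixed points, and a swapping hyperbolic $\gamma$ is excluded because $\FIX(\gamma) = \FIX(\gamma^2) \supseteq \{z_+, z_-\}$ forces $\gamma$ to fix the two points to begin with. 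Thus $\gamma$ is hyperbolic fixing $z_\pm$; since two hyperbolic elements of a discrete group sharing one boundary fixed point share both, $\gamma$ has the same axis $A$ as $\gamma_0$.
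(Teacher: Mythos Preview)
Your proof is correct and follows essentially the same strategy as the paper: both arguments show that a general $\gamma \in \Gamma_W$ maps the boundary fixed-point set of a ``witness'' isometry to itself, via conjugation invariance of $\gamma \mapsto \epsilon_\gamma$ and Corollary~\ref{Kor:Eberlein3.1cAnalogon}, and then run a case analysis. The only notable difference is in the hyperbolic case: the paper observes directly that $\gamma$ preserves the axis $A$ setwise and invokes \cite{BGS} Lemma~6.5 to conclude $\gamma$ is hyperbolic with axis $A$, whereas you pass to $\gamma^2$ to rule out the parabolic and swap possibilities by hand; your route is slightly more self-contained, the paper's slightly shorter. (Your final sentence about ``sharing one boundary fixed point'' is redundant, since once $\gamma$ fixes both $z_\pm$ its axis is already forced to be $A$.)
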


\begin{proof}
The proof is similar to the one in \cite{BGS} Lemma 10.3, though we have to replace the statements (*) and Lemma 10.2 by our analogs Corollary \ref{Kor:Eberlein3.1cAnalogon} and Lemma \ref{Lem:BGS10.2Analogon}. Let
\[
\mathcal{B} := \{ \gamma \in \Gamma \setminus \{\id\} : \text{There is } x \in W \text{ with } d_{\gamma}(x) < \epsilon_{\gamma} \}.
\]
By Corollary \ref{Kor:Eberlein3.1cAnalogon}, $\FIX(\gamma) = \FIX(\gamma')$ for any $\gamma, \gamma' \in \mathcal{B}$. Since parabolic isometries have precisely one fixed point and hyperbolic isometries have two, the elements of $\mathcal{B}$ are either all hyperbolic with common axis $A$ or all parabolic with common fixed point $z \in X(\infty)$. Note that by Lemma \ref{Lem:BGS10.2Analogon} we have $\mathcal{B} \subseteq \Gamma_W$.

If $\gamma \in \Gamma_W$ is nontrivial and $x \in W$, then $\gamma x \in W$. Thus by definition of $X_- \supseteq W$, there is $\beta \in \Gamma \setminus \{\id\}$ such that $\gamma x \in \{ d_{\beta} < \epsilon_{\beta} \}$. Obviously, $\beta \in \mathcal{B}$ and we conclude
\[
\epsilon_{\beta} > d_{\beta}(\gamma x) = d_{\gamma^{-1} \beta \gamma}(x).
\]
By conjugation invariance of $\gamma \mapsto \epsilon_{\gamma}$, it follows that $\epsilon_{\gamma^{-1} \beta \gamma} = \epsilon_{\beta}$ and thus $\gamma^{-1} \beta \gamma \in \mathcal{B}$.

If $\beta$ is hyperbolic with axis $A$, then $\gamma^{-1} \beta \gamma \in \mathcal{B}$ is hyperbolic with axis $\gamma^{-1} A$. Since the axes of two hyperbolic elements of $\mathcal{B}$ have to coincide, we get $\gamma^{-1} A = A$. So $\gamma$ leaves $A$ invariant and thus is itself hyperbolic with axis $A$, see \cite{BGS} Lemma 6.5.

Let otherwise $\beta$ be parabolic with fixed point $z \in X(\infty)$, so $\mathcal{B}$ consists only of parabolic isometries with fixed point $z$. Using $\gamma^{-1} \beta \gamma \in \mathcal{B}$ we get $\gamma^{-1} \beta \gamma (z) = z$, thus $\beta (\gamma z) = \gamma z$, i.e. $\gamma z$ is another fixed of $\beta$. Now uniqueness yields $\gamma z = z$. Since for discrete $\Gamma$, parabolic and hyperbolic isometries can't share a fixed point, we conclude that $\gamma$ itself has to be parabolic.
\end{proof}

The following statements will be useful when dealing with parabolic groups.

\begin{lem}\label{Lem:Eberlein3.1ef}
Let $z \in X(\infty)$ be the fixed point of a parabolic isometry in $\Gamma$.
\begin{enumerate}
\item {\normalfont(\cite{Eberlein} Lemma 3.1e)} For every $\delta > 0$, there is a horosphere $HS$ around $z$ which is precisely invariant under $\Gamma$, such that for every point $x$ in the corresponding closed horoball $HB$, we have $d_{\Gamma}(x) < \delta$.

\item {\normalfont(\cite{Eberlein} Lemma 3.1f)} If $c$ is a geodesic in $X$ with $c(\infty) = z$, there is some $t_1 \in \R$ with $d_{\Gamma}(c(t_1)) > \epsilon(n)/2$.
\end{enumerate}
\end{lem}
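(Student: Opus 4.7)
The plan is to handle the two statements in parallel, using parabolic monotonicity, Corollary \ref{Kor:Eberlein3.1cAnalogon}, and the horoball structure around $z$. For (1) we aim to produce a horoball $HB$ around $z$ that is simultaneously small in the $d_\Gamma$-sense and precisely invariant under $\Gamma$; for (2) we argue by contradiction, reducing to the claim that the connected component of the thin part at $z$ meets $X(\infty)$ only at $z$ itself.

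For (1), fix any parabolic $\gamma_0 \in \Gamma$ with $\gamma_0 z = z$ (existing by hypothesis). The main substep is to show that $\Gamma_z$ contains elements of arbitrarily small displacement on sufficiently deep horoballs around $z$. In the pinched-curvature case this would follow immediately from $\lim_{t\to\infty} d_{\gamma_0}(c(t))=0$ along rays to $z$, but in the pure visibility setting $\inf d_{\gamma_0}$ may be positive, so one must invoke the Margulis lemma at points $c(t)\to z$ along such a ray: the virtually nilpotent Margulis groups $\Gamma_\epsilon(c(t))$ for $t\to\infty$ consist, by Lemma \ref{Eberlein3.1b}, of parabolic isometries fixing a common boundary point, necessarily $z$, and produce elements of $\Gamma_z$ with displacement at $c(t)$ tending to $0$; the lattice hypothesis enters essentially here, ensuring cusp cross-sections collapse. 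Convexity of the displacement functions and monotonicity then propagate smallness from $c$ to an entire horoball $HB$ through $c(t)$, yielding $d_\Gamma<\delta$ on $\overline{HB}$. For precise invariance, we shrink $HB$ so that $d_\Gamma<\epsilon(n)/2$ on $\overline{HB}$; if $\gamma HB\cap HB\ni y$, then both $y$ and $\gamma^{-1}y$ lie in $HB$, so we can pick $\gamma_1\in\Gamma\setminus\{\id\}$ with $d_{\gamma_1}(\gamma^{-1}y)<\epsilon(n)/2$. Corollary \ref{Kor:Eberlein3.1cAnalogon} applied in $\overline{HB}$ to $\gamma_0$ and $\gamma_1$ gives $\FIX(\gamma_1)=\{z\}$. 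Then $d_{\gamma\gamma_1\gamma^{-1}}(y)=d_{\gamma_1}(\gamma^{-1}y)<\epsilon(n)/2$, so the same corollary at $y$ applied to $\gamma_0$ and $\gamma\gamma_1\gamma^{-1}$ forces $\gamma\gamma_1\gamma^{-1}\in\Gamma_z$. Hence $\gamma_1$ fixes $\gamma^{-1}z$, and uniqueness of the parabolic fixed point yields $\gamma z=z$. Since $\Gamma_z$ preserves every horosphere around $z$, $\gamma HB=HB$.

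For (2), suppose for contradiction that $d_\Gamma(c(t))\leq\epsilon(n)/2$ for all $t\in\R$. By part (1) and monotonicity, $\gamma_0$ witnesses thinness at $c(t)$ for $t$ large, so Corollary \ref{Kor:Eberlein3.1cAnalogon} applied to the path-connected set $c(\R)$ forces every $\gamma\in\Gamma$ with $d_\gamma(c(t))<\epsilon(n)/2$ at some $t$ to satisfy $\FIX(\gamma)=\{z\}$, hence to be parabolic fixing $z$. Consequently $c(\R)$ lies in $\overline{W}$ for a single connected component $W$ of $X_-$ whose stabilizer $\Gamma_W\subseteq\Gamma_z$ consists of parabolic isometries fixing $z$ (Lemma \ref{Lem:Gamma_W_hyperbolisch_parabolisch}). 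The core topological step is $\overline{W}\cap X(\infty)=\{z\}$: given $x_n\in W$ converging to $x_\infty\in X(\infty)$, pick $\gamma_n\in\Gamma_W$ with $d_{\gamma_n}(x_n)<\epsilon(n)/2$. If only finitely many distinct $\gamma_n$ occur, a fixed $\gamma^*$ satisfies $d_{\gamma^*}(x_n)<\epsilon(n)/2$ for infinitely many $n$; by visibility, $x_n$ and $\gamma^* x_n$ then share the boundary limit $x_\infty$, making $x_\infty$ a fixed point of $\gamma^*$, so $x_\infty=z$ by uniqueness. If infinitely many $\gamma_n$ are distinct, discreteness of $\Gamma$ combined with the north-south dynamics of the parabolic subgroup $\Gamma_z$ at $z$ forces $\gamma_n x_0\to z$ for any fixed $x_0$, and the bounded-displacement condition $d(x_n,\gamma_n x_n)<\epsilon(n)/2$ again pins $x_\infty$ to $z$. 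Since $c$ is a geodesic with $c(\infty)=z\neq c(-\infty)$, this contradicts $c(\R)\subseteq\overline{W}$, so some $t_1$ must satisfy $d_\Gamma(c(t_1))>\epsilon(n)/2$.

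The main obstacle is the smallness step in part (1): in the pure-visibility regime, monotonicity of a single $d_{\gamma_0}$ is not enough because its infimum may be positive, so one genuinely needs the Margulis lemma along a sequence of points deepening into the cusp, and the lattice hypothesis is essential to generate sufficiently many small-displacement elements of $\Gamma_z$.
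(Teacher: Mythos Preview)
The paper does not give its own proof of this lemma; both parts are stated with a bare citation to Eberlein's paper and used as black boxes. So there is no ``paper's proof'' to compare against, and your write-up is effectively a standalone attempt at Eberlein's argument.

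Your outline has the right architecture, but there is a genuine gap in part (1), precisely at the point you flag as ``the main obstacle''. You write that one should ``invoke the Margulis lemma at points $c(t)\to z$'' so that the Margulis groups $\Gamma_\epsilon(c(t))$ ``produce elements of $\Gamma_z$ with displacement at $c(t)$ tending to $0$''. But the Margulis lemma says nothing unless $\Gamma_\epsilon(c(t))$ is already nontrivial, i.e.\ unless you already know $d_\Gamma(c(t))<\epsilon$ --- which is exactly the smallness you are trying to establish. The Margulis lemma does not \emph{produce} small-displacement elements; it only tells you that, once such elements exist, they sit in a virtually nilpotent group and hence (via Lemma~\ref{Eberlein3.1b}) in $\Gamma_z$. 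The existence step genuinely needs the finite-volume hypothesis in a direct way: if $d_\Gamma(c(t))$ stayed bounded below by some $\delta>0$ for all large $t$, then disjoint embedded $\delta/2$-balls along $c$ would give infinite volume in $M$. You acknowledge that ``the lattice hypothesis enters essentially here'', but as written the argument is circular; you need to separate the volume argument (which gives $d_\Gamma(c(t_k))\to 0$) from the Margulis/Lemma~\ref{Eberlein3.1b} step (which identifies the witnessing elements as lying in $\Gamma_z$), and only then can monotonicity propagate smallness to a full horoball.

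Two smaller points. First, you repeatedly cite Corollary~\ref{Kor:Eberlein3.1cAnalogon}, but that corollary is stated for connected components $W$ of $X_-$; for an arbitrary path-connected set such as $\overline{HB}$ or $c(\R)$ on which $d_\Gamma\le\epsilon(n)/2$, the correct reference is Lemma~\ref{Eberlein3.1c}. Second, in part (2) the phrase ``north-south dynamics of the parabolic subgroup $\Gamma_z$'' is a misnomer --- parabolic isometries have a single boundary fixed point, not a source--sink pair --- and the assertion that $\gamma_n x_0\to z$ for any divergent sequence $\gamma_n\in\Gamma_z$ is true in the visibility setting but is not as immediate as you suggest; it deserves at least a sentence (e.g.\ any accumulation point of $\{\gamma_n x_0\}$ in $X(\infty)$ must be $\Gamma_z$-invariant, hence equal to $z$). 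The finite-$\gamma_n$ case in your argument is fine.
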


\begin{lem}\label{Lem:ParabolischeGruppenStabilisator}
In the parabolic case we have $\Gamma_W = \Gamma_z$, where $z \in X(\infty)$ is the common fixed point of the parabolic elements of $\Gamma_W$.
\end{lem}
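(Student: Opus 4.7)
The plan is to prove the two inclusions separately. The easy direction $\Gamma_W \subseteq \Gamma_z$ is immediate from Lemma~\ref{Lem:Gamma_W_hyperbolisch_parabolisch}: in the parabolic case every nontrivial element of $\Gamma_W$ fixes $z$, and $\id \in \Gamma_z$ trivially. For the reverse inclusion $\Gamma_z \subseteq \Gamma_W$, take a nontrivial $\gamma \in \Gamma_z$. First observe that $\gamma$ itself must be parabolic, because by hypothesis $\Gamma$ already contains a parabolic element with fixed point $z$, and in a discrete subgroup of $\Isom(X)$ parabolic and hyperbolic isometries cannot share a fixed point (noted in the preliminaries). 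It suffices to show $\gamma x_0 \in W$ for some $x_0 \in W$: then Lemma~\ref{Lem:BGS10.2Analogon}(1) forces $\gamma W = W$.

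The key construction is a horoball around $z$ sitting entirely inside $W$. Apply Lemma~\ref{Lem:Eberlein3.1ef}(1) with $\delta = \epsilon$ to get a precisely invariant (closed) horoball $HB$ around $z$ with $d_\Gamma(x) < \epsilon$ for every $x \in HB$. Since $\epsilon_{\gamma'} \geq \epsilon$ for every $\gamma' \in \Gamma \setminus \{\id\}$, this estimate yields $HB \subseteq X_-$. Now pick any $x_0 \in W$; as $x_0 \in X_-$, there is some $\beta \in \Gamma \setminus \{\id\}$ with $d_\beta(x_0) < \epsilon_\beta$, which by Lemma~\ref{Lem:BGS10.2Analogon}(2) lies in $\Gamma_W$ and hence, by Lemma~\ref{Lem:Gamma_W_hyperbolisch_parabolisch}, is parabolic with fixed point $z$. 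Applying the monotonicity statement for parabolic isometries from the preliminaries to the geodesic ray $c$ from $x_0$ to $z$, the function $t \mapsto d_\beta(c(t))$ is strictly decreasing, so the entire ray lies in $\{d_\beta < \epsilon_\beta\} \subseteq X_-$; being connected and meeting $x_0$, it lies in $W$. Since $c(t)$ eventually enters the horoball $HB$, we conclude $HB \cap W \neq \emptyset$, and then connectedness of $HB \subseteq X_-$ forces $HB \subseteq W$.

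Exactly the same argument handles $\gamma x_0$ via a conjugation trick. Set $\beta' := \gamma \beta \gamma^{-1}$. By conjugation invariance of $\gamma' \mapsto \epsilon_{\gamma'}$, one has
\[
d_{\beta'}(\gamma x_0) = d_\beta(x_0) < \epsilon_\beta = \epsilon_{\beta'},
\]
and $\beta'$ fixes $\gamma z = z$, so it is parabolic with the same fixed point. The monotonicity argument then shows the geodesic ray from $\gamma x_0$ to $z$ lies in $\{d_{\beta'} < \epsilon_{\beta'}\} \subseteq X_-$ and eventually enters $HB \subseteq W$, so $\gamma x_0$ lies in the same connected component of $X_-$ as $HB$, namely $W$. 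Thus $\gamma W \cap W \neq \emptyset$, whence $\gamma W = W$ by Lemma~\ref{Lem:BGS10.2Analogon}(1), i.e. $\gamma \in \Gamma_W$.

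The main technical point is the horoball step: one must upgrade the soft bound "$d_\Gamma < \delta$" from Lemma~\ref{Lem:Eberlein3.1ef}(1) not merely to $HB \subseteq X_-$ (which is just a matter of choosing $\delta \leq \epsilon$), but to the sharper statement $HB \subseteq W$ for the specific component $W$ of interest. The monotonicity-of-displacement-along-rays trick does precisely this by producing an explicit path in $X_-$ from any point of $W$ into $HB$; once $HB \subseteq W$ is available, the conjugation-invariance identity above makes the analysis of $\gamma x_0$ essentially automatic.
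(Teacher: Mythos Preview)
Your proof is correct. The overall architecture matches the paper's: establish $\Gamma_W \subseteq \Gamma_z$ via Lemma~\ref{Lem:Gamma_W_hyperbolisch_parabolisch}, then for the reverse inclusion produce a horoball $HB \subseteq W$ around $z$ and use it to force $\gamma W \cap W \neq \emptyset$.

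There is, however, a genuine tactical difference in how you close the argument. The paper exploits the \emph{precise invariance} of $HB$ granted by Lemma~\ref{Lem:Eberlein3.1ef}(1): since $\gamma z = z$, the image $\gamma HB$ is again a horoball around $z$, hence meets $HB$, so by precise invariance $\gamma HB = HB \subseteq W$, giving $\gamma W \cap W \supseteq HB$. You instead invoke the conjugation identity $d_{\gamma\beta\gamma^{-1}}(\gamma x_0) = d_\beta(x_0)$ and rerun the monotonicity-along-rays argument to connect $\gamma x_0$ to $HB$ inside $X_-$. Your route is slightly longer but has two advantages: it never actually uses the precise invariance clause of Lemma~\ref{Lem:Eberlein3.1ef}(1) (only the displacement bound), and it makes fully explicit why $HB$ lands in the \emph{particular} component $W$ rather than merely in $X_-$ --- a step the paper asserts but does not spell out. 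The paper's version is cleaner once one accepts $HB \subseteq W$; yours is more self-contained.
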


\begin{proof}
Lemma \ref{Lem:Gamma_W_hyperbolisch_parabolisch} yields $\Gamma_W \subseteq \Gamma_z$. Let otherwise $\gamma \in \Gamma_z$. Using Lemma \ref{Lem:Eberlein3.1ef}, there is a horoball $HB \subseteq W$ around $z$ which is precisely invariant under $\Gamma$ such that $d_{\Gamma}(x) < \epsilon$ for all $x \in HB$. Since $\gamma z = z$, the horoball $HB' = \gamma HB$ is also centered at $z$, thus has to intersect $HB$ nontrivially. So $HB = HB'$ by the precise invariance of $HB$. But using $HB \subseteq W$, this means that $\gamma W \cap W \neq \emptyset$, so $\gamma W = W$ by Lemma \ref{Lem:BGS10.2Analogon}, i.e. $\gamma \in \Gamma_W$.
\end{proof}

Note that the components $U$ of the thin part $M_-$ are precisely of the form $\pi(W)$ $(\cong W\slash\Gamma_W)$ for some component $W$ of $X_-$. The dichotomy given by Lemma \ref{Lem:Gamma_W_hyperbolisch_parabolisch} will then allow us to study the structure of such $U$ more easily.

\begin{lem}\label{Lem:U_unbeschraenkt_GDW_hyperbolisch}
A component $U$ of $M_-$ is bounded if and only if the hyperbolic case occurs in Lemma \ref{Lem:Gamma_W_hyperbolisch_parabolisch}. (So equivalently, $U$ is unbounded iff the parabolic case happens.)
\end{lem}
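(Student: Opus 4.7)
The plan is to split along the dichotomy of Lemma \ref{Lem:Gamma_W_hyperbolisch_parabolisch}.

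For the hyperbolic case I expect $U$ to be bounded. The stabilizer $\Gamma_W$ is torsion-free and acts faithfully on the common axis $A$ (a nontrivial element fixing $A$ pointwise would be elliptic, contradicting the manifold assumption on $\Gamma$), so in the $K<0$ setting $\Gamma_W \cong \Z$, generated by a primitive hyperbolic $\gamma_0$ of translation length $\ell_0 > 0$. Any $x \in W$ lies in some $\{d_\gamma < \epsilon_\gamma\}$ with $\gamma \in \Gamma \setminus \{\id\}$; by Lemma \ref{Lem:BGS10.2Analogon}(2) such $\gamma$ lies in $\Gamma_W$, so $\gamma = \gamma_0^k$ for some $k \neq 0$. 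Since $\min d_{\gamma_0^k} = |k|\ell_0$ while $\epsilon_\gamma \leq \epsilon(n)/2$, only finitely many exponents contribute nonempty sublevel sets, and by the tubular-neighborhood remark closing section \ref{Kapitel:Grundlagen} each such set is contained in a neighborhood $(A)_{r_k}$ of $A$. Taking $R = \max r_k$ gives $W \subseteq (A)_R$. As $\Gamma_W$ acts cocompactly by translation on $A$, the quotient $(A)_R/\Gamma_W$ is a compact tubular neighborhood of the closed geodesic $A/\Gamma_W$, and $U \subseteq (A)_R/\Gamma_W$ is bounded.

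For the parabolic case I expect $U$ to be unbounded. Using Lemma \ref{Lem:ParabolischeGruppenStabilisator} I identify $\Gamma_W = \Gamma_z$, and Lemma \ref{Lem:Eberlein3.1ef}(1) applied with $\delta = \epsilon$ yields a closed horoball $HB$ around $z$, precisely invariant under $\Gamma$, on which $d_\Gamma < \epsilon \leq \epsilon_\gamma$; hence $HB \subseteq X_-$ lies in a single component $W^*$ of $X_-$. The crucial subclaim is $W = W^*$: on one hand, for any $x \in HB$ at distance greater than $\epsilon$ from $\partial HB$, any $\gamma \in \Gamma$ realizing $d_\gamma(x) < \epsilon$ moves $x$ within $HB$, so precise invariance forces $\gamma HB = HB$ and in particular $\gamma z = z$; since no hyperbolic element of a discrete group can share a fixed point with a parabolic, $\gamma \in \Gamma_z$, and Lemmas \ref{Lem:BGS10.2Analogon}(2), \ref{Lem:Gamma_W_hyperbolisch_parabolisch}, \ref{Lem:ParabolischeGruppenStabilisator} combine to give $\Gamma_{W^*} = \Gamma_z$. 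On the other hand, any nontrivial $\beta \in \Gamma_W$ with $\{d_\beta < \epsilon_\beta\} \cap W \neq \emptyset$ produces, by the monotonicity of $t \mapsto d_\beta(c(t))$ along a geodesic $c$ to $z$, a full geodesic ray contained in $W$ and tending to $z$; this ray eventually enters $HB$, so $W \cap W^* \neq \emptyset$ and the two components coincide.

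With $HB \subseteq W$ established, I bound $d_M(\pi(c(s)), \pi(c(t)))$ from below by splitting the infimum over $\Gamma$, using a unit-speed radial geodesic $c$ to $z$ with $c(t) \in HB$ for all $t$ large: elements of $\Gamma_z$ preserve the horospheres around $z$, giving $d(c(s), \gamma c(t)) \geq t - s$, while elements outside $\Gamma_z$ push $c(t)$ out of $HB$ by precise invariance, giving $d(c(s), \gamma c(t)) \geq d(c(s), \partial HB)$, which grows linearly in $s$. Choosing for instance $s = n$ and $t = 2n$ and sending $n \to \infty$ produces points in $U$ of arbitrarily large mutual distance, proving $U$ unbounded. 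The main obstacle I foresee is the identification $W = W^*$ in the parabolic case, where the precise invariance of the Eberlein horoball, the fixed-point dichotomy for discrete groups, and the monotonicity of the displacement along radial geodesics all have to be woven together; once this identification is settled, both conclusions reduce to standard estimates on tubular neighborhoods of axes and on Busemann functions along radial geodesics.
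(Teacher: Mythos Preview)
Your argument is correct and follows essentially the same route as the paper, which simply defers to \cite{BGS} Lemma~10.3: in the hyperbolic case $W$ sits in a bounded tube $(A)_R$ around the common axis and $\Gamma_W\cong\Z$ acts cocompactly, while in the parabolic case one exhibits a precisely invariant horoball inside $W$ whose quotient contains points at arbitrarily large mutual distance. Your detour through $\Gamma_{W^*}=\Gamma_z$ is harmless but unnecessary---the ray argument via monotonicity of $d_\beta$ along geodesics to $z$ already forces $W\cap HB\neq\emptyset$ and hence $HB\subseteq W$ directly.
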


\begin{proof}
The proof is basically identical to the one in \cite{BGS} Lemma 10.3 (see the last two paragraphs in the proof there).
\end{proof}

As usual, the bounded components of $M_-$ are known as \textbf{tubes}.

\begin{lem}\label{Lem:Roehren}
Let $U = \pi(W)$ be bounded. Then $U$ is homeomorphic to a tubular neighborhood of a closed geodesic of length $< \epsilon(n)/2$. (Equivalently, $U$ is homeomorphic to a $D^{n-1}$-bundle over $\S^1$ and thus $U \simeq \S^1$.)
\end{lem}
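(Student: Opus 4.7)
The plan is to use the hyperbolic alternative of Lemma \ref{Lem:Gamma_W_hyperbolisch_parabolisch} (available by Lemma \ref{Lem:U_unbeschraenkt_GDW_hyperbolisch}) to pin down the structure of $\Gamma_W$, and then describe $W$ as a star-shaped tubular neighborhood of the common axis which descends cleanly to $U$.

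First, since $U$ is bounded, every nontrivial $\gamma\in\Gamma_W$ is hyperbolic with a common axis $A$ and acts on $A$ by translation. The induced homomorphism $\Gamma_W\to\R$ (translation length along $A$) is faithful — an isometry fixing $A$ pointwise would be elliptic, contradicting torsion-freeness of $\Gamma$ — and discrete, since $\Gamma$ itself is. Hence $\Gamma_W=\langle\gamma_0\rangle\cong\Z$ for some generator $\gamma_0$ of minimal translation length $\ell$. To establish the length bound I argue as follows: picking any $x\in W$ yields some $\gamma\in\Gamma\setminus\{\id\}$ with $d_\gamma(x)<\epsilon_\gamma$, and by Lemma \ref{Lem:BGS10.2Analogon} such a $\gamma$ lies in $\Gamma_W$, so $\gamma=\gamma_0^k$ with $k\neq 0$. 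Therefore $|k|\ell=\min d_{\gamma_0^k}<\epsilon_{\gamma_0^k}\le\epsilon(n)/2$, and in particular $\ell<\epsilon(n)/2$. Since $\Gamma_W$ is precisely the $\Gamma$-stabilizer of $A$ (any $\gamma\in\Gamma$ with $\gamma A\cap A\neq\emptyset$ forces $\gamma W\cap W\neq\emptyset$ because $A\subseteq W$, hence $\gamma\in\Gamma_W$ by Lemma \ref{Lem:BGS10.2Analogon}), the projection $\pi$ embeds $A/\Gamma_W$ as a closed geodesic of length $\ell$ in $M$.

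Next, I describe $W$ geometrically. Combining Lemma \ref{Lem:BGS10.2Analogon} with the fact that each nonempty sublevel set $\{d_\gamma<\epsilon_\gamma\}$ for $\gamma\in\Gamma_W\setminus\{\id\}$ is convex (hence connected) and contains $A\subseteq W$, I obtain
\[
W \;=\;\bigcup_{\gamma\in\Gamma_W\setminus\{\id\}}\{d_\gamma<\epsilon_\gamma\}.
\]
By the strict monotonicity of $t\mapsto d_\gamma(c(t))$ along geodesic rays emanating orthogonally from $A$ (recalled in Section \ref{Kapitel:Grundlagen}), each summand is star-shaped about $A$, so $W$ itself is a star-shaped open tubular neighborhood of $A$.

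Finally I pass to the quotient. Since $X$ is Hadamard, the normal exponential map $\exp^\perp:\nu A\to X$ is a diffeomorphism, carrying $W$ to a fibrewise star-shaped open neighborhood of the zero section of the normal bundle $\nu A$. The $\Gamma_W$-action lifts equivariantly to $\nu A$, covering the translation action on $A$, so the quotient $\nu A/\Gamma_W$ is the normal bundle of the closed geodesic $A/\Gamma_W$, an $\R^{n-1}$-bundle over $\S^1$, and $U=W/\Gamma_W$ corresponds to a fibrewise star-shaped open neighborhood of the zero section. A fibrewise radial rescaling then identifies $U$ with the open $D^{n-1}$-bundle, hence with a $D^{n-1}$-bundle over $\S^1$; the deformation retraction onto the zero section yields $U\simeq\S^1$. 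The main technical point is the continuity of the radial rescaling in the base, i.e.\ of the exit-radius function $(p,u)\mapsto\sup\{t>0:\exp^\perp_p(tu)\in W\}$ on the unit normal bundle; this follows from the openness of $W$ together with the strict monotonicity of the $d_\gamma$ along normal rays (ensuring the exit radius is finite and attained transversally on $\partial W$). Since this is exactly the argument in \cite{BGS} Lemma 10.3, once the hyperbolic dichotomy and the bound $\ell<\epsilon(n)/2$ are in place, one may also simply refer to that reference.
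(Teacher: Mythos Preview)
Your proof is correct and follows essentially the same route as the paper: identify $\Gamma_W=\langle\gamma_0\rangle$ acting on the common axis $A$, express $W$ as a union of sublevel sets $\{d_{\gamma_0^k}<\epsilon_{\gamma_0^k}\}$ which by monotonicity along normal rays form a tubular neighborhood of $A$, and project to $M$. You have simply filled in more detail than the paper's terse argument --- notably the explicit verification that $\Gamma_W\cong\Z$, the length bound $\ell<\epsilon(n)/2$, and the continuity of the radial rescaling --- but the underlying strategy is the same.
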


\begin{proof}
Note that $W$ is a finite union of sublevel sets $\{ d_{\gamma} < \epsilon_{\gamma} \}$, where each such $\gamma$ is of the form $\gamma = \gamma_0^k$ for some $k \in \Z \setminus \{0\}$, with $\gamma_0$ the hyperbolic isometry with minimal translation on the common axis $A$. Using that these sublevel sets are tubular neighborhoods of $A$ and the monotonicity of each $d_{\gamma}$ when moving towards $A$, we see that $W$ itself is also a tubular neighborhood of $A$. The statement for $U$ follows after projecting to $M$.
\end{proof}

An unbounded component of $M_-$ is called \textbf{cusp}:

\begin{lem}\label{Lem:Spitzen}
Let $U = \pi(W)$ be unbounded. Then $U$ is homeomorphic to $V \times (0,\infty)$ for a suitable compact, $(n-1)$-dimensional manifold $V$. Moreover, there is a strong deformation retraction of $U$ onto $\partial U$, where $\partial U$ denotes the boundary of $U$ in $M = X\slash\Gamma$.
\end{lem}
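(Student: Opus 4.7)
Since $U$ is unbounded, Lemma \ref{Lem:U_unbeschraenkt_GDW_hyperbolisch} puts us in the parabolic case of Lemma \ref{Lem:Gamma_W_hyperbolisch_parabolisch}: every nontrivial element of $\Gamma_W$ is parabolic with a common fixed point $z\in X(\infty)$, and by Lemma \ref{Lem:ParabolischeGruppenStabilisator} we have $\Gamma_W=\Gamma_z$. The plan is to use the Busemann function $h$ at $z$ to write $W$ as an upward graph region in a product $HS_0\times\R$ and then to pass to $U$ by dividing out $\Gamma_W$. Fix a horosphere $HS_0$ at $z$; the flow of $-\grad h$ along rays asymptotic to $z$ provides a $C^1$-diffeomorphism
\[
\Psi\colon HS_0\times\R\longrightarrow X,\qquad (y,t)\longmapsto c_y(t),
\]
with $c_y$ the geodesic through $y$ satisfying $c_y(\infty)=z$, parametrized so that $t$ increases towards $z$. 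Since every parabolic isometry with fixed point $z$ preserves each horosphere at $z$, the action of $\Gamma_W=\Gamma_z$ transported by $\Psi$ is fibrewise on the $HS_0$-factor.

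Next I would invoke the monotonicity recorded at the end of Section \ref{Kapitel:Grundlagen}: for every parabolic $\gamma\in\Gamma_W\setminus\{\id\}$ the map $t\mapsto d_\gamma(c_y(t))$ is strictly decreasing. Consequently each sublevel set $\{d_\gamma<\epsilon_\gamma\}$ contributing to $W$ is upward-saturated in the $\R$-factor of $\Psi$, and so is $W$; dually, Lemma \ref{Lem:Eberlein3.1ef}(2) forces every vertical ray to leave $X_-\supseteq W$ as $t\to-\infty$. Hence
\[
\Psi^{-1}(W)=\{(y,t)\in HS_0\times\R \mid t>f(y)\}
\]
for a $\Gamma_W$-invariant function $f\colon HS_0\to\R$, and the strict monotonicity makes the vertical flow cross each smooth piece $\{d_\gamma=\epsilon_\gamma\}$ of $\partial W$ transversally, whence $f$ is continuous. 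To prepare for the quotient, I would choose $HS_0$ inside the precisely $\Gamma$-invariant horoball $HB$ produced by Lemma \ref{Lem:Eberlein3.1ef}(1); since $HB$ is connected and contained in $X_-$, we have $HB\subseteq W$, so $HS_0\subseteq W$, $f<0$ throughout, and $HB/\Gamma_W$ embeds into $U\subseteq M$ with finite volume.

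The cross-section $V:=HS_0/\Gamma_W$ is an $(n-1)$-dimensional manifold which inherits finite volume from $HB/\Gamma_W$ through the bounded Jacobians of the Busemann flow on a fixed compact vertical slab; the standard finite-volume arguments of \cite{BGS} chapter 10 and \cite{Eberlein} then force $V$ to be compact. This is the step I expect to be the main obstacle, because in the visibility setting the displacement $d_\gamma$ along a vertical ray need not decay to $0$, so cocompactness of $\Gamma_z$ on horospheres must be extracted from finite volume of $M$ together with bounded-Jacobian estimates for the Busemann flow, rather than from any asymptotic decay as in the pinched case. Once compactness of $V$ is in hand, $\Psi$ descends to a homeomorphism $U\cong\{([y],t)\in V\times\R\mid t>\bar f([y])\}$, and the reparametrization $([y],t)\mapsto([y],t-\bar f([y]))$ in the vertical direction yields $U\cong V\times(0,\infty)$ with $\partial U$ corresponding to $V\times\{0\}$; the straight-line homotopy $([y],s,r)\mapsto([y],(1-r)s)$ in the vertical coordinate then provides the required strong deformation retraction of $\overline U$ onto $\partial U$.
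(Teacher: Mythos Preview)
Your proof is correct and follows essentially the same route as the paper: both use the geodesic flow toward the parabolic fixed point $z$ to exhibit $W$ as a product over a horosphere cross-section and then divide by $\Gamma_W=\Gamma_z$. For the compactness of $V=HS_0/\Gamma_z$ that you flag as the main obstacle, the paper simply cites \cite{Eberlein} Lemma~3.1g, which asserts precisely this in the visibility finite-volume setting, so no separate bounded-Jacobian argument is needed.
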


\begin{proof}
This is again a fairly standard argument (see also \cite{Eberlein} Theorem 3.1). Let $z \in X(\infty)$ be the parabolic fixed point of $\Gamma_W = \Gamma_z$. If $HS$ is a horosphere around $z$, then $V := HS\slash\Gamma_z$ is a compact $(n-1)$-dimensional $C^2$-manifold by \cite{Eberlein} Lemma 3.1g. Now observe that if $B(t)$ denotes image of $\partial W$ under the geodesic flow to $z$ at time $t$, we also get $B(t)\slash\Gamma_z \cong V$. Finally, $W = \bigcup_{t>0} B(t)$ yields the desired product structure.

The strong deformation retraction of $W$ onto $\partial W$ (which gives us the one for $U$ onto $\partial U$ after projecting to $M$) can be constructed using the above geodesic flow -- now in the opposite direction, i.e. flowing away from $z$ -- and stopping at $\partial W$.
\end{proof}

By our choice of the constants, the components of $X_-$ have a uniform minimal distance from each other.

\begin{lem}\label{Lem:AbstandKomponentenW}
There is a constant $\delta = \delta(n) > 0$ depending only on $n$, such that $d(W,W') \geq \delta$ for all connected components $W \neq W'$ of $X_-$.
\end{lem}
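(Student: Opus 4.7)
The plan is to argue by contradiction: I assume $d(W, W') < \delta$ for a constant $\delta = \delta(n)$ to be pinned down, and derive that $W$ and $W'$ must share either a common axis or a common parabolic fixed point, ultimately giving $W = W'$.

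To start, I pick $x \in W$, $x' \in W'$ with $d(x, x') < \delta$, together with nontrivial $\gamma, \gamma' \in \Gamma$ witnessing that $x, x' \in X_-$, i.e.\ $d_\gamma(x) < \epsilon_\gamma$ and $d_{\gamma'}(x') < \epsilon_{\gamma'}$. At the midpoint $m$ of the geodesic segment $[x, x']$, the triangle inequality combined with $\gamma$ being an isometry yields
\[
d_\gamma(m) \leq 2\,d(m, x) + d_\gamma(x) < \delta + \epsilon_\gamma \leq \delta + \epsilon(n)/2,
\]
and symmetrically for $\gamma'$. Choosing $\delta := \epsilon(n)/2$ keeps both displacements strictly below the Margulis threshold $\epsilon(n)$, so $\gamma, \gamma' \in \Gamma_{\epsilon(n)}(m)$. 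By the Margulis lemma (Theorem \ref{Samet2.1}) this group is virtually nilpotent, and Lemma \ref{Eberlein3.1b} then forces $\FIX(\gamma) = \FIX(\gamma')$ with both isometries of the same type.

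I then split into the two cases of Lemma \ref{Lem:Gamma_W_hyperbolisch_parabolisch}. In the hyperbolic case, let $A$ denote the common axis. Since $d_\gamma|_A = \min d_\gamma \leq d_\gamma(x) < \epsilon_\gamma$, the axis lies in the open connected set $\{d_\gamma < \epsilon_\gamma\} \subseteq X_-$, which also contains $x \in W$; hence $A \subseteq W$. The symmetric argument gives $A \subseteq W'$, so $W \cap W' \neq \emptyset$ and $W = W'$, a contradiction. In the parabolic case, let $z \in X(\infty)$ be the common fixed point and apply Lemma \ref{Lem:Eberlein3.1ef}(1) with parameter $\epsilon$ to obtain a horoball $HB$ around $z$ satisfying $HB \subseteq X_-$; being connected, $HB$ lies in a single component of $X_-$. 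By the monotonicity of $d_\gamma$ along rays to $z$, the geodesic ray from $x$ to $z$ stays in $\{d_\gamma < \epsilon_\gamma\}$ and hence in $W$ by connectedness, and since it converges to $z$ it eventually enters $HB$, giving $HB \cap W \neq \emptyset$. The analogous reasoning yields $HB \cap W' \neq \emptyset$, forcing $W = W'$ once more, a contradiction.

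The main technical point is the calibration of $\delta$ against $\epsilon(n)$: the triangle-inequality estimate at the midpoint must end strictly below $\epsilon(n)$ in order for the Margulis lemma to apply, and the slack $\epsilon_\gamma \leq \epsilon(n)/2$ built into the chosen assignment is exactly what makes $\delta := \epsilon(n)/2$ succeed. Accordingly, I expect $\delta(n) := \epsilon(n)/2$ to be the correct constant (any smaller positive value works as well).
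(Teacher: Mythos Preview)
Your proof is correct and follows essentially the same strategy as the paper: apply the Margulis lemma at a nearby point to force $\gamma$ and $\gamma'$ into a common virtually nilpotent group, deduce $\FIX(\gamma)=\FIX(\gamma')$ via Lemma~\ref{Eberlein3.1b}, and then argue in the hyperbolic/parabolic cases that $W=W'$. Two minor differences: the paper estimates at the endpoint $x'$ rather than at the midpoint, which leads it to the smaller constant $\delta(n)=\epsilon(n)/8$ (your midpoint trick legitimately buys the sharper $\epsilon(n)/2$); and in the parabolic case the paper invokes Lemma~\ref{Lem:ParabolischeGruppenStabilisator} to get $\gamma'\in\Gamma_W=\Gamma_z$ directly, whereas you connect $W$ and $W'$ through a common horoball --- both routes are valid and equally short.
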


\begin{proof}
Set $\delta(n) := \frac{\epsilon(n)}{8}$ and write $\delta = \delta(n)$. Let $x \in W$ and $x' \in W'$, i.e. $x \in \{ d_{\gamma} < \epsilon_{\gamma} \}$ and $x' \in \{ d_{\gamma'} < \epsilon_{\gamma'} \}$ for suitable $\gamma \in \Gamma_W$ and $\gamma' \in \Gamma_{W'}$. If $d(x,x') < \delta$, then using $\epsilon_{\gamma}, \epsilon_{\gamma'} \leq \epsilon(n)/2$ we get
\[
d_{\gamma}(x') \leq 2 d(x,x') + d_{\gamma}(x) < 2 \delta + \epsilon_{\gamma} \leq \frac{2 \epsilon(n)}{8} + \frac{\epsilon(n)}{2} < \epsilon(n).
\]
Thus $\gamma \in \Gamma_{\epsilon(n)}(x')$ (and trivially $\gamma' \in \Gamma_{\epsilon(n)}(x')$, since $d_{\gamma'}(x') < \epsilon_{\gamma'} < \epsilon(n)$). But $\Gamma_{\epsilon(n)}(x')$ is virtually nilpotent by Theorem \ref{Samet2.1}, so Lemma \ref{Eberlein3.1b} yields $\Gamma_{\epsilon(n)}(x') \subseteq \Gamma_z$ for some $z \in X(\infty)$. Hence $\gamma, \gamma' \in \Gamma_z$.

If $\gamma$ is parabolic, then $\Gamma_W = \Gamma_z$ (Lemma \ref{Lem:Gamma_W_hyperbolisch_parabolisch} and \ref{Lem:ParabolischeGruppenStabilisator}), so $\gamma' \in \Gamma_W$. Using
\[
W = \bigcup\limits_{\gamma \in \Gamma_W \setminus \{\id\}} \{ d_{\gamma} < \epsilon_{\gamma} \},
\]
we deduce $\{ d_{\gamma'} < \epsilon_{\gamma'} \} \subseteq W$ and thus $x' \in W$, i.e. $W = W'$.

Conversely, if $\gamma$ is hyperbolic, let $A$ be the common axis of the nontrivial elements of $\Gamma_W$. Since $\gamma \in \Gamma_z$, we can assume $z = A(\infty)$. By discreteness of $\Gamma$, the nontrivial elements of $\Gamma_z$ all have to be hyperbolic and share their fixed points with $\gamma$, i.e. $A(\infty)$ and $A(-\infty)$. Hence every nontrivial $\gamma'' \in \Gamma_z$ -- and thus in particular $\gamma' \in \Gamma_z$ -- also has axis $A$. Since $\{ d_{\gamma'} < \epsilon_{\gamma'} \}$ is connected and contains $A$, it lies in the same connected component of $X_-$ as $A$, i.e. $\{ d_{\gamma'} < \epsilon_{\gamma'} \} \subseteq W$. So $x' \in W$ and thus $W = W'$.
\end{proof}

Using the previous Lemma \ref{Lem:AbstandKomponentenW} we can deduce that the number of components of $M_-$ is linearly controlled by the volume of $M$:

\begin{lem}\label{Lem:LineareSchrankeAnzahlKomponenten}
There is a constant $C = C(\epsilon, n) > 0$ depending only on $\epsilon$ and $n$, such that the number of connected components of $M_-$ is bounded by $C \cdot \Vol(M)$.
\end{lem}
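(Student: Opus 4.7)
The plan is to associate to each component $U$ of $M_-$ a small ball in $M$ of definite volume, so that these balls are pairwise disjoint; the number of components then falls out from $\Vol(M)$.

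First I would pick, for every component $U$ of $M_-$, a point $p_U \in \partial U$ (the boundary is nonempty, since otherwise $U$ would be a clopen subset of the connected manifold $M$, forcing $U = M$ — a case contributing only one component and handled trivially). Such a $p_U$ lies in $M_+$: indeed, for $\tilde p_U \in X$ a lift of $p_U$, having $\tilde p_U \in X_-$ would put it in some component $W'$ of $X_-$; since $W'$ is open, a whole neighborhood of $\tilde p_U$ would then lie in $W'$, contradicting the fact that $\tilde p_U$ is on the boundary of a lift $W$ of $U$ and that distinct components of $X_-$ are at distance $\geq \delta(n)$ apart by Lemma~\ref{Lem:AbstandKomponentenW}. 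Hence $d_\gamma(\tilde p_U) \geq \epsilon_\gamma \geq \epsilon$ for all $\gamma \in \Gamma \setminus \{\id\}$, so $d_\Gamma(\tilde p_U) \geq \epsilon$ and therefore $\injrad_M(p_U) \geq \epsilon/2$.

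Next I would establish disjointness. For two distinct components $U = \pi(W)$ and $U' = \pi(W')$ of $M_-$, no $\Gamma$-translate of $W$ equals $W'$, so every pair of lifts is a pair of distinct components of $X_-$ and Lemma~\ref{Lem:AbstandKomponentenW} yields $d(\gamma W, W') \geq \delta(n)$ for all $\gamma \in \Gamma$. Taking infima shows $d_M(U, U') \geq \delta(n)$, and this distance bound passes to the closures, giving $d_M(p_U, p_{U'}) \geq \delta(n)$. Setting
\[
r := \tfrac{1}{2} \min\{\delta(n), \epsilon\},
\]
the open balls $B_M(p_U, r)$ are pairwise disjoint.

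Finally I would apply a Bishop--G\"unther-type volume comparison. Since $r \leq \injrad_M(p_U)$, the ball $B_M(p_U, r)$ is isometric to $B_X(\tilde p_U, r)$. The curvature bound $K \leq 0$ makes the exponential map volume-nondecreasing, so $\Vol(B_X(\tilde p_U, r)) \geq \omega_n r^n$ where $\omega_n$ denotes the Euclidean unit-ball volume. Summing over all components gives
\[
(\text{number of components of } M_-) \cdot \omega_n r^n \leq \Vol(M),
\]
and one sets $C := (\omega_n r^n)^{-1}$, which depends only on $\epsilon$ and $n$.

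The only genuinely delicate point is the verification that a chosen boundary point of a thin component indeed sits in the thick part with injectivity radius bounded below; everything else is combining the uniform separation from Lemma~\ref{Lem:AbstandKomponentenW} with standard Hadamard volume comparison. The rest is routine bookkeeping.
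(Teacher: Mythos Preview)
Your proof is correct and follows essentially the same approach as the paper's: to each component of $M_-$ you associate a point in the thick part (you take $p_U \in \partial U$, the paper takes a point in $(W)_{\delta/4}\setminus W$ and projects), verify pairwise separation via Lemma~\ref{Lem:AbstandKomponentenW}, and bound the count by comparing the resulting disjoint balls to Euclidean ones. The only cosmetic differences are the precise location of the chosen point and the resulting radius ($\tfrac{1}{2}\min\{\delta,\epsilon\}$ versus the paper's $\min\{\epsilon/2,\delta/4\}$).
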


\begin{proof}
The proof is again a standard argument, although the variable choice of levels $\epsilon_{\gamma} \in [\epsilon, \epsilon(n)/2]$ will create an additional dependence on $\epsilon$.

Let $U \neq U'$ be connected components of $M_-$; recall that $U = W\slash\Gamma_W$ and $U' = W' \slash \Gamma_{W'}$ for suitable components $W \neq W'$ of $X_-$. By Lemma \ref{Lem:AbstandKomponentenW} there is $\delta := \delta(n) > 0$ such that $d(W, W') \geq \delta > 0$. In particular, the $\delta/2$-neighborhoods of $W$ and $W'$ are disjoint, i.e. $(W)_{\delta/2} \cap (W')_{\delta/2} = \emptyset$.

Choose $x \in (W)_{\delta/4} \setminus W$, so $x \notin X_-$. By construction of $X_-$, we get $d_{\Gamma}(x) \geq \epsilon$. Thus the ball $B$ of radius $\rho := \min\{ \epsilon/2, \delta/4 \}$ around $x$ projects injectively into $M = X\slash\Gamma$. In particular, the volumes of $B$ and $\pi(B)$ coincide. We repeat this procedure for every connected component $U$ of $M_-$ and thus get a ball $\pi(B) \subseteq M$ for every such $U$; observe that all these balls will be disjoint by choice of $x$ and $\rho$.

By the curvature assumptions, we have that the volume of an $r$-ball in $X$ is greater or equal to the volume $V(r,n)$ of an $r$-ball in $\R^n$. Thus also $\pi(B) \geq V(\rho,n)$ for each of the above balls $\pi(B)$ and so by disjointness, there can be at most $\Vol(M)/V(\rho,n)$ such balls. Hence $C := 1/V(\rho,n)$ fulfills the assumption. $C$ depends only on $\epsilon$ and $n$ because $\rho$ only depends on $\epsilon$ and $\delta(n)$.
\end{proof}

The thick part is compact and for dimension $n>2$ also connected.

\begin{lem}\label{Lem:DickerTeilKompakt}
$M_+$ is compact.
\end{lem}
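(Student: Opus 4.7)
The plan is to show $M_+$ is closed and bounded, then invoke Hopf-Rinow. First, $M_+ = M \setminus M_-$ is closed since $M_-$ is open: it is the projection of $X_-$, which is a union of open sublevel sets $\{d_\gamma < \epsilon_\gamma\}$. As $M$ is complete, by Hopf-Rinow it suffices to establish that $M_+$ is bounded.

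For boundedness, I would exploit a uniform injectivity radius bound on the thick part. Any $p \in M_+$ lifts to some $x \in X_+$, which by definition satisfies $d_\gamma(x) \geq \epsilon_\gamma \geq \epsilon$ for every nontrivial $\gamma \in \Gamma$; hence $d_\Gamma(x) \geq \epsilon$. Combined with the relation $d_\Gamma(x) = 2 \cdot \injrad_M(\pi(x))$, this yields $\injrad_M(p) \geq \epsilon/2$. Consequently, the metric ball $B(p, \epsilon/8) \subseteq M$ is isometric to a ball of the same radius in $X$, and since $K \leq 0$, the Bishop volume comparison gives $\Vol(B(p, \epsilon/8)) \geq V(\epsilon/8, n)$, where $V(r,n)$ denotes the volume of a Euclidean $r$-ball.

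Now I would run the standard packing argument, mimicking the proof of Lemma \ref{Lem:LineareSchrankeAnzahlKomponenten}. Pick a maximal $(\epsilon/4)$-separated subset $\{p_1, \ldots, p_N\} \subseteq M_+$; the balls $B(p_i, \epsilon/8)$ are pairwise disjoint, so finiteness of $\Vol(M)$ forces
\[
N \leq \frac{\Vol(M)}{V(\epsilon/8, n)} < \infty.
\]
By maximality of the separated set, $M_+ \subseteq \bigcup_{i=1}^N B(p_i, \epsilon/4)$, so $M_+$ is bounded. Together with closedness of $M_+$ and completeness of $M$, Hopf-Rinow gives compactness. I do not anticipate a genuine obstacle: this is a direct reuse of the volume-packing trick already appearing in Lemma \ref{Lem:LineareSchrankeAnzahlKomponenten}, the only new input being the observation that the thick part admits a uniform positive lower bound on the injectivity radius coming directly from the constraint $\epsilon_\gamma \geq \epsilon$.
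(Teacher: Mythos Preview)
Your proof is correct and follows essentially the same approach as the paper's: both choose a maximal separated subset of $M_+$, use the injectivity-radius lower bound on $M_+$ together with volume comparison to bound its cardinality, and conclude boundedness (hence compactness) from maximality. The only differences are cosmetic choices of constants (the paper uses an $\epsilon$-discrete set with $\epsilon/4$-balls, you use an $\epsilon/4$-separated set with $\epsilon/8$-balls) and your explicit mention of Hopf--Rinow.
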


\begin{proof}
Again, the proof is standard (see e.g. \cite{BenPet} Proposition D.2.6 for the case $X = \H^n$).

Choose a maximal $\epsilon$-discrete subset $\mathcal{M} \subseteq M_+$, i.e. $d(p_1,p_2) \geq \epsilon$ for all $p_1, p_2 \in \mathcal{M}$. Then in particular, the $(\epsilon/4)$-balls $B^M_{\epsilon/4}(p)$ around the points $p \in \mathcal{M}$ are disjoint. With similar arguments as in the previous proof, we get that the volume of each such ball is bounded from below by the volume $V(\epsilon/4,n)$ of an $\epsilon/4$-ball in $\R^n$. Again, we deduce $|\mathcal{M}| \leq \Vol(M)/V(\epsilon/4,n)$, which is finite. But the maximality of $\mathcal{M}$ yields
\[
M_+ \subseteq \bigcup\limits_{p \in \mathcal{M}} B^M_{2\epsilon}(p),
\]
so the closed subset $M_+ \subseteq M$ is also bounded and hence compact.
\end{proof}

\begin{lem}\label{Lem:DickerTeilZshgd}
$M_+$ is connected for $n>2$.
\end{lem}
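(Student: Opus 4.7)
The plan is to leverage connectedness of $M$: if every component $U$ of the thin part $M_-$ has connected boundary $\partial U \subseteq M_+$, then any hypothetical separation $M_+ = A \sqcup B$ would extend to a separation of $M$. The hypothesis $n > 2$ enters only to guarantee that $\partial U$ is connected.

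First I would verify this connectedness of $\partial U$ via the dichotomy in Lemma \ref{Lem:Gamma_W_hyperbolisch_parabolisch}. For a cusp, Lemma \ref{Lem:Spitzen} gives $U \cong V \times (0,\infty)$ with $V = HS\slash\Gamma_z$; since a horosphere $HS$ is homeomorphic to $\R^{n-1}$ and in particular connected, the quotient $V$ is connected, and so is $\partial U \cong V$. For a tube, Lemma \ref{Lem:Roehren} identifies $U$ with a $D^{n-1}$-bundle over $\S^1$, so $\partial U$ is the associated $\S^{n-2}$-bundle over $\S^1$; for $n > 2$ both fibre and base are connected, hence so is the total space. This is precisely where the assumption $n > 2$ is needed: for $n = 2$ the tube boundary is $\S^0 \times \S^1$, which is disconnected.

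Then I would pass from connected boundaries to the global statement. Suppose for contradiction $M_+ = A \sqcup B$ with $A, B$ closed in $M_+$, non-empty and disjoint; since $M_-$ is open, $M_+$ is closed in $M$, so $A, B$ are closed in $M$. Each $\partial U$ is connected and contained in $A \sqcup B$, so it lies entirely in $A$ or entirely in $B$; this partitions the (finitely many, by Lemma \ref{Lem:LineareSchrankeAnzahlKomponenten}) components of $M_-$ into two families $\mathcal{S}_A, \mathcal{S}_B$. Set
\[
\widetilde A := A \cup \bigcup_{U \in \mathcal{S}_A} U, \qquad \widetilde B := B \cup \bigcup_{U \in \mathcal{S}_B} U.
\]
These are disjoint with union $M$, and both are closed in $M$: a limit point of $\widetilde A$ either lies in $M_+$, where it belongs to $\overline{A} \cup \bigcup_{U \in \mathcal{S}_A}\partial U \subseteq A \subseteq \widetilde A$, or inside some open component $U'$ of $M_-$, forcing $U' \in \mathcal{S}_A$ and the limit to lie in $\widetilde A$. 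This disconnects $M$, a contradiction.

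The only technical point I expect to require care is the identification of $\partial U$ in the tube case. It follows from Lemma \ref{Lem:Roehren} (boundary of a disk bundle is the associated sphere bundle), but one could alternatively argue directly: convexity and strict monotonicity of $d_\gamma$ make each $\{d_\gamma = \epsilon_\gamma\}$ diffeomorphic to $\S^{n-2} \times \R$, and quotienting by the cyclic $\Gamma_W$ gives the $\S^{n-2}$-bundle over $\S^1$.
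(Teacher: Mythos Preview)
Your argument is correct and complete; the only place requiring a little care is, as you note, the identification of $\partial U$ for tubes, and your monotonicity remark handles this.

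The paper's proof is much terser and organizes the argument differently. Rather than treating all thin components uniformly via the connectedness of $\partial U$, it first disposes of the cusps by observing that the deformation retraction of Lemma~\ref{Lem:Spitzen} makes $M_+ \simeq M \setminus \{\text{tubes}\}$; then it invokes the standard fact that removing a tubular neighborhood of a submanifold of codimension $\geq 2$ (here a closed geodesic, codimension $n-1$) cannot disconnect a connected manifold. Your approach unpacks this codimension principle into its underlying reason---the sphere bundle $\partial U$ is connected when $n-2 \geq 1$---and then runs an explicit separation argument. The paper's route is quicker for a reader who already knows the codimension fact; yours is more self-contained and makes the role of $n>2$ completely transparent.
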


\begin{proof}
Contracting the cusps won't change connectivity (see Lemma \ref{Lem:Spitzen}). Hence by Lemma \ref{Lem:Roehren} we just have to observe that $M_+$ might not be connected only if geodesics have codimension $\leq 1$, i.e. in dimension $n \leq 2$.
\end{proof}

We summarize the above results in the following theorem.

\begin{satz}\label{Satz:DickDuennZerlegungSichtbarkeit}
We have:
\begin{enumerate}
\item $M_+$ is a compact manifold with boundary.

\item $M_+$ is connected for dimension $n>2$.

\item The number of connected components of $M_-$ is bounded by $C \cdot \Vol(M)$, where $C = C(\epsilon,n) > 0$ is a constant only depending on $\epsilon$ and $n$.

\item The connected components $U$ of $M_-$ are of one of the following two shapes:
\begin{itemize}
\item Tubes (bounded components), i.e. $U$ is homeomorphic to a $D^{n-1}$-bundle over $\S^1$ and thus homotopy equivalent to $\S^1$.

\item Cusps (unbounded components), i.e. $U$ is homeomorphic to $V \times (0,\infty)$ for some compact $(n-1)$-dimensional manifold $V$. If $\partial U$ is the boundary of $U$ in $M$, then there is a strong deformation retraction of $U$ onto $\partial U$.
\end{itemize}
In particular, $M$ is homotopy equivalent to the compact manifold $M_C$ with boundary, which is constructed out of $M$ by contracting the cusps onto their common boundary with $M_+$. Equivalently, $M_C$ is the union of $M_+$ with the finitely many tubes. {\hfill$\square$}
\end{enumerate}
\end{satz}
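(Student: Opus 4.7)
The theorem is a consolidation of the preceding lemmas, so my plan is simply to assemble the pieces in order, with a small amount of extra work for the manifold-with-boundary statement and for the final homotopy equivalence $M \simeq M_C$. Items (2) and (3) are immediate: (2) is Lemma \ref{Lem:DickerTeilZshgd}, and (3) is Lemma \ref{Lem:LineareSchrankeAnzahlKomponenten}. For (4), the dichotomy of Lemma \ref{Lem:Gamma_W_hyperbolisch_parabolisch} together with Lemma \ref{Lem:U_unbeschraenkt_GDW_hyperbolisch} says that every component $U$ of $M_-$ falls into exactly one of the hyperbolic (bounded) or parabolic (unbounded) cases. The tube structure in the hyperbolic case is Lemma \ref{Lem:Roehren}, and the cusp structure together with the strong deformation retraction of $U$ onto $\partial U$ in the parabolic case is Lemma \ref{Lem:Spitzen}.

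For (1), compactness is exactly Lemma \ref{Lem:DickerTeilKompakt}. The manifold-with-boundary claim is the only point that is not a verbatim reference: since $M_-$ is open as a union of open sublevel sets, $M_+$ is closed, and near a boundary point $p$ the definition is locally governed by finitely many smooth convex functions $d_\gamma$. By Lemma \ref{Lem:AbstandKomponentenW} the relevant $\gamma$'s all come from the unique component $W$ of $X_-$ near a preimage of $p$, and by Lemma \ref{Lem:Gamma_W_hyperbolisch_parabolisch} they all share an axis (resp.\ parabolic fixed point), so by the monotonicity statements at the end of Section~\ref{Kapitel:Grundlagen} the defining function $\min_\gamma(\epsilon_\gamma - d_\gamma)$ is locally described by a single $d_\gamma$ with nonvanishing gradient on the relevant level set. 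This gives a half-space chart, so $\partial M_+$ is a (topological) hypersurface and $M_+$ is a manifold with boundary.

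Finally, for the homotopy equivalence $M \simeq M_C$, I would run the cusp retractions of Lemma \ref{Lem:Spitzen} in parallel. Since the components of $M_-$ are pairwise disjoint (with minimal distance $\delta(n)$ between distinct preimages in $X$, via Lemma \ref{Lem:AbstandKomponentenW}) and finitely many in number (item (3)), the individual strong deformation retractions of the cusps onto their boundaries glue with the identity on $M_+ \cup (\text{tubes})$ to a strong deformation retraction of $M$ onto this union, which is by definition $M_C$.

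The only real obstacle is the manifold-with-boundary portion of (1); the rest is bookkeeping. This is why I would lean on Lemmas \ref{Lem:AbstandKomponentenW} and \ref{Lem:Gamma_W_hyperbolisch_parabolisch} to reduce the local description of $\partial M_+$ to the level set of a single smooth function whose gradient is controlled by the explicit monotonicity of $d_\gamma$ along the axis or along geodesic rays to the parabolic fixed point, respectively.
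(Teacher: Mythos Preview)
Your proposal is correct and matches the paper's approach: the theorem is presented there as a summary with no separate proof (the $\square$ sits at the end of the statement), so assembling Lemmas \ref{Lem:Roehren}, \ref{Lem:Spitzen}, \ref{Lem:U_unbeschraenkt_GDW_hyperbolisch}, \ref{Lem:LineareSchrankeAnzahlKomponenten}, \ref{Lem:DickerTeilKompakt}, and \ref{Lem:DickerTeilZshgd} is exactly what is intended. Your added detail for $M\simeq M_C$ is fine.

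One small point on the manifold-with-boundary argument: the claim that near a boundary point the function $\min_\gamma(\epsilon_\gamma - d_\gamma)$ is governed by a \emph{single} $d_\gamma$ is not justified by monotonicity and is generally false---several sublevel-set boundaries can meet. What monotonicity along the rays to the axis (resp.\ to the parabolic fixed point) actually gives you is that $W$ is star-shaped with respect to $A$ (resp.\ $z$), so $\partial W$ is a graph over the sphere of directions (resp.\ over a horosphere); this is exactly what the proofs of Lemmas \ref{Lem:Roehren} and \ref{Lem:Spitzen} establish, and it yields the topological half-space chart without needing a single active $\gamma$. Replace that sentence and the argument is clean.
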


\section{Efficient simplicial model}\label{Kapitel:Hauptresultat}

In this section we will prove the main result and its consequences, i.e. that every negatively curved visibility manifold of finite volume admits an efficient simplicial model for its thick part, which in turn yields bounds on the homology. The general idea of the proof is similar to the one in \cite{Sauer}: construct a good cover of the thick part (which, in general, will be larger than the thick part) that can be homotoped back onto the thick part. This ensures that the thick part will be homotopy equivalent to the good cover, which in turn is homotopy equivalent to its nerve complex via the nerve lemma; the nerve complex is the desired simplicial model.

The main difficulty lies in making the good cover stable under the flow from the thin to the thick part. In fact, we first have to say which flow we actually mean. \cite{Sauer} uses the fact that there is a nice, well-defined direction between the sublevel sets of commuting isometries to construct such a flow; suitably chosen balls (whose centers must not lie too close to the boundary of the thick part) will then be stable under this flow and can thus be taken to construct the good cover. While this works just fine in the pinched curvature case, a new problem came up in the $K<0$ visibility situation as we no longer had the virtual nilpotence of the parabolic stabilizer groups $\Gamma_z$ -- but this virtual nilpotence was crucial in order to have commuting isometries to construct the direction of the flow in the parabolic case, i.e. for cusps. For this reason we instead chose a different flow, namely the canonical one given by the geodesics to/from the parabolic fixed point. Although this at least gives us a flow to work with, making sure that sets are stable under this flow turns out to be much harder. Our approach is to force sets to be stable by cutting off the portion of the set which exceeds the thick part and instead continuing the remainder along the flow lines. This cut-off procedure in turn might lead to non-contractible sets or intersections, so the cover would no longer be good. Fortunately, this can be worked around: cutting the sets into smaller parts will finally yield a good cover.

Note that all these issues only arise near the cusps and we could actually take the same flow and construction as in \cite{Sauer} for the hyperbolic case, i.e. the tubes. Nonetheless, we replicate the new procedure also for the tubes, just to be consistent with the parabolic case. We further remark that in \cite{JiWu} it was recently proven that also in the visibility situation, the parabolic groups $\Gamma_z$ are in fact virtually nilpotent. Thus it might be the case that the strategy of \cite{Sauer} could as well be translated to the visibility setting. As the result of \cite{JiWu} was not available to us when we proved our statements, we will go on with our strategy regardless.

We retain the situation of the previous section, setting $\epsilon := \epsilon(n)/4$ and $\epsilon_{\gamma} := \epsilon$ for all $\gamma \in \Gamma \setminus \{ \id \}$. Again $M_+$ and $M_+$ denote the thick parts of $X$ and $M$, while $X_-$ and $M_-$ are the thin parts. As we chose constant levels $\epsilon_{\gamma} = \epsilon$, we get $X_+ = \{ x \in X : d_{\Gamma}(x) \geq \epsilon \}$. With $\delta = \delta(n)$ from Lemma \ref{Lem:AbstandKomponentenW}, we define the shrunken thick part
\[
M'_+ := X'_+\slash\Gamma = M \setminus (M_-)_{\delta/4}, \qquad\text{where}\qquad X'_+ := X \setminus (X_-)_{\delta/4}.
\]
By choice of $\delta$, the components of $(M_-)_{\delta/4}$ are in one-to-one correspondence with those of $M_-$ (and similarly for $X$). The covering sets of the good cover we construct later on will actually only cover the shrunken thick part (which will be homotopy equivalent to the regular thick part), but be contained in the regular thick part; this way, we still have control over the injectivity radius even though our cover is larger than the shrunken thick part.

\subsection{Defining the flow}

We will now construct the flow and show that the thick part is indeed homotopy equivalent to the shrunken thick part, where we first treat the situation in $X$ and then get the desired statements for $M$ via $\Gamma$-equivariance of the construction. Let $W$ be a component of $X_-$; the set $W' := (W)_{\delta/4} \setminus W$ is the corresponding component of $X_+ \setminus X'_+$ that we want to retract onto $X_+$. Denote the set of all those $W'$ by $\mathcal{W}$.

In the parabolic case -- i.e. $\pi(W)$ is a cusp of $M$ -- let $z \in X(\infty)$ be the parabolic fixed point; recall that $\Gamma_W = \Gamma_z$. For every $x \in \partial W$, let $c_x$ denote the unique unit speed geodesic from $z$ through $x$, with parametrization $c_x(0) = x$ and $c_x(-\infty) = z$ (i.e. flowing away from $z$). By monotonicity of $t \mapsto d_{\gamma}(c_x(t))$ for every nontrivial $\gamma \in \Gamma_z$, we see that $c_x(t)$ is in the interior of $W$ for all $t<0$ and outside the closure of $W$ for all $t>0$. Moreover, for distinct $x, x' \in \partial W$, the corresponding $c_x, c_{x'}$ are disjoint. Thus for all $y \in W'$, we have a representation as $y = c_x(t_y)$ for unique $x \in \partial W$ and $t_y > 0$. Let $T_x$ denote the time where $c_x$ enters $X'_+$ for the first time.

\begin{lem}\label{Lem:Stetigkeit_Eintrittszeit}
The entry time
\[
T: \partial W \rightarrow (0,\infty), \quad x \mapsto T_x
\]
is well-defined and continuous.
\end{lem}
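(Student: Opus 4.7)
The claim has two parts. \emph{Well-definedness of $T_x$} is the easier one. Applying Lemma~\ref{Lem:Eberlein3.1ef}(2) to the parabolic fixed point $z = c_x(-\infty)$ yields some $t_1 > 0$ with $d_\Gamma(c_x(t_1)) > \epsilon(n)/2 = 2\epsilon$. Since each displacement function is $2$-Lipschitz, this forces $d(c_x(t_1), X_-) \geq (2\epsilon - \epsilon)/2 = \epsilon/2 = \delta > \delta/4$, hence $c_x(t_1) \in X'_+$. Closedness of $X'_+$ makes $A_x := \{t \geq 0 : c_x(t) \in X'_+\}$ a nonempty closed set, and since $x \in \overline{W} \subseteq (X_-)_{\delta/4}$ (open), a right-neighborhood of $0$ is disjoint from $A_x$. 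Thus $T_x := \min A_x$ is attained and strictly positive.

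For \emph{continuity} I would prove lower and upper semi-continuity separately. The lower direction is soft: if $x_n \to x_0$ in $\partial W$ and $T_{x_n} \to T_* < \infty$, joint continuity of $(x, t) \mapsto c_x(t)$ gives $c_{x_n}(T_{x_n}) \to c_{x_0}(T_*)$, and closedness of $X'_+$ forces $c_{x_0}(T_*) \in X'_+$, i.e.\ $T_* \in A_{x_0}$, so $T_* \geq T_{x_0}$.

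For the upper direction the scheme is: given $\eta > 0$, produce some $t_\eta \in (T_{x_0}, T_{x_0} + \eta)$ with $c_{x_0}(t_\eta)$ in the \emph{open} interior of $X'_+$, i.e.\ $d(c_{x_0}(t_\eta), X_-) > \delta/4$ strictly. Such a $t_\eta$ would yield $c_{x_n}(t_\eta) \in X'_+$ for $n$ large by joint continuity, so $T_{x_n} \leq t_\eta < T_{x_0} + \eta$, and letting $\eta \to 0$ concludes $\limsup T_{x_n} \leq T_{x_0}$.

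\textbf{Main obstacle.} The crux is thus producing $t_\eta$, i.e.\ ruling out a tangential exit of $c_{x_0}$ from $(X_-)_{\delta/4}$ at $T_{x_0}$. By Lemma~\ref{Lem:AbstandKomponentenW} only $W$ itself can lie within $\delta/4$ of the segment $c_{x_0}([0, T_{x_0}])$, so $d(c_{x_0}(\cdot), X_-)$ and $d(c_{x_0}(\cdot), W)$ agree near $T_{x_0}$. I would extract strict monotonicity of $t \mapsto d(c_{x_0}(t), W)$ past $T_{x_0}$ from the decomposition $W = \bigcup_{\gamma \in \Gamma_z \setminus \{\id\}} \{d_\gamma < \epsilon\}$: for each nontrivial $\gamma \in \Gamma_z$ the function $t \mapsto d_\gamma(c_{x_0}(t))$ is strictly increasing by the parabolic monotonicity recalled in Section~\ref{Kapitel:Grundlagen}, and the closed sublevel $K_\gamma := \{d_\gamma \leq \epsilon\}$ is convex, so $t \mapsto d(c_{x_0}(t), K_\gamma)$ is convex, vanishes on $(-\infty, \tau_\gamma]$ (the exit time, which is $\leq 0$), and is strictly positive afterwards; convexity with a one-sided minimum at $\tau_\gamma$ forces strict monotonicity on $[\tau_\gamma, \infty)$. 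A local finiteness argument, together with the identity $d(\cdot, W) = \inf_\gamma d(\cdot, K_\gamma)$, transfers this strict monotonicity to $d(c_{x_0}(\cdot), W)$ itself, supplying the desired $t_\eta$ and completing upper semi-continuity, hence continuity.
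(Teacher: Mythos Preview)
Your overall structure is sound and arguably cleaner than the paper's: rather than splitting into semi-continuity, the paper runs a direct $\epsilon$-$\delta$ argument, picking auxiliary points $x_1 \in W' \setminus \overline{W}$ and $x_2 \in \operatorname{int}(X'_+)$ on $c_x$ just before and just after the entry time, placing small balls around them, and using joint continuity of $(x,t) \mapsto c_x(t)$ to trap $T_{x'}$ for nearby $x'$. Both routes reduce to the crux you correctly isolate: one must know that $c_{x_0}$ enters the \emph{interior} of $X'_+$ immediately after $T_{x_0}$, i.e.\ that $t \mapsto d(c_{x_0}(t), W)$ is strictly increasing past $0$.

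There is, however, a genuine gap in your monotonicity argument for the individual $K_\gamma$. You assume an exit time $\tau_\gamma \leq 0$ exists, i.e.\ that the ray $c_{x_0}$ actually meets $K_\gamma = \{d_\gamma \leq \epsilon\}$. In pinched negative curvature this holds because $d_\gamma(c_{x_0}(t)) \to 0$ as $t \to -\infty$; but the paper explicitly warns at the end of Section~\ref{Kapitel:Grundlagen} that in the mere visibility setting this limit need not be $0$, so for some $\gamma \in \Gamma_z$ the geodesic $c_{x_0}$ may never enter $K_\gamma$. In that case convexity of $t \mapsto d(c_{x_0}(t), K_\gamma)$ alone does not rule out a minimum at some $t > 0$, and your deduction of strict monotonicity fails for precisely those $\gamma$. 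The paper closes this gap (in Lemma~\ref{Lem:Eigenschaften_Konstruktion_B'}(3)) with a horoball-projection trick: projecting $c_{x_0}(t)$ and its foot point on $\overline{K_\gamma}$ onto a suitable horoball about $z$ strictly shortens the distance between them, which shows the convex function $t \mapsto d(c_{x_0}(t), K_\gamma)$ has no minimum on $\R$ and is therefore strictly increasing. With that one extra ingredient, your scheme goes through.
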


\begin{proof}
We first show that $T$ is well-defined. Let $y' \in \partial X'_+$, so there is $y \in \partial X_-$ with $d(y',y) = \delta/4$. Since $d_{\Gamma}(y) = \epsilon$ and by the choice of $\epsilon$  and $\delta$, we get $d_{\Gamma}(y') < \epsilon(n)/2$. By Lemma \ref{Lem:Eberlein3.1ef}, also $d_{\Gamma}(c_x(t)) > \epsilon(n)/2$ for some $t \in \R$; since $d_{\Gamma}(c_x(0)) = \epsilon(n)/4$, the existence of an intersection of $c_x$ with $X'_+$ follows via the intermediate value theorem. Hence $T_x$ is well-defined.

{
\begin{figure}
\centering

\begin{tikzpicture}[scale=0.75]


\begin{scope}

\clip[xshift=-3cm, yshift=+2cm] (0.25,-3) -- (7.5,-3) -- (7.5,4.5) -- (0.25,4.5) -- cycle;


\fill[color=gray!0, xshift=-3cm, yshift=+2cm] (7.5,-3) -- (8.75, 4.5) -- (0.75, 4.5) -- (-0.5,-3) -- cycle; 


\fill[color=gray!8, xshift=-3cm, yshift=+2cm] (0.25,1.5) arc (160:30:2.75cm and 1.2cm) .. controls (6.75, 2) .. (8.25,1.5) -- (8.75, 4.5) -- (0.25, 4.5) -- cycle; 


\fill[color=gray!8, xshift=-3cm, yshift=+2cm] (-0.375,-2.25) arc (160:30:2.75cm and 1.2cm) .. controls (6.125, -1.75) .. (7.625,-2.25) -- (7.5, -3) -- (-0.5, -3) -- cycle; 


\draw[ultra thin, xshift=0 cm, yshift=0 cm,decoration={markings, mark=at position 0.75 with {\arrow{<}}},
        postaction={decorate}] (1.45,6.5) .. controls (-0.3,3.03) and (-0.7,2.03) .. (-1,0.53);
\draw[ultra thin, xshift=0 cm, yshift=0 cm,decoration={markings, mark=at position 0.75 with {\arrow{<}}},
        postaction={decorate}] (0.95,6.5) .. controls (-0.8,3) and (-1.2,2) .. (-1.5,0.5);


\coordinate (x) at (-1,0.53); 
\coordinate (x') at (-1.5,0.5); 
\coordinate (x_1) at (-0.18,3.1); 
\coordinate (x'_1) at (-0.52,3.45); 
\coordinate (cxTx) at (0.35,4.26); 
\coordinate (cx'Tx') at (-0.13,4.29); 
\coordinate (x_2) at (0.96,5.52); 
\coordinate (x'_2) at (0.62,5.86); 

\draw[ultra thin, xshift=-3cm, yshift=+2cm] (0.25,1.5) arc (160:30:2.75cm and 1.2cm) .. controls (6.75, 2) .. (8.25,1.5); 

\draw[ultra thin, xshift=-3cm, yshift=+2cm] (-0.375,-2.25) arc (160:30:2.75cm and 1.2cm) .. controls (6.125, -1.75) .. (7.625,-2.25); 

\draw[ultra thin, fill=gray, fill opacity=0.1] (cxTx) circle (1.5cm); 
\draw[ultra thin, fill=gray, fill opacity=0.1] (x_1) circle (0.7cm); 
\draw[ultra thin, fill=gray, fill opacity=0.1] (x_2) circle (0.7cm); 

\filldraw (x) circle (0.75pt) node[below, xshift=0pt, yshift=-1pt] {$x$}; 
\filldraw (x') circle (0.75pt) node[below, xshift=0pt, yshift=2pt] {$x'$}; 
\filldraw (x_1) circle (0.75pt) node[right, yshift=-1pt] {$x_1$}; 
\filldraw (x'_1) circle (0.75pt) node[above left, xshift=1pt, yshift=-3pt] {$x'_1$}; 
\filldraw (cxTx) circle (0.75pt) node[below right, xshift=-2pt, yshift=3pt] {$c_x(T_x)$}; 
\filldraw (cx'Tx') circle (0.75pt) node[above left, xshift=2pt, yshift=-1pt] {$c_{x'}(T_{x'})$}; 
\filldraw (x_2) circle (0.75pt) node[right, yshift=-1pt] {$x_2$}; 
\filldraw (x'_2) circle (0.75pt) node[above left, xshift=0pt, yshift=-2pt] {$x'_2$}; 

\node at (3.25,5) {$X'_+$}; 
\node at (3.25,1.75) {$W'$}; 
\node at (3.25,-0.5) {$W$}; 

\node at (-0.4,1.58) {$c_x$}; 
\node at (-1.6,1.6) {$c_{x'}$}; 

\end{scope}

\end{tikzpicture}

\caption{
Situation in the proof of Lemma \ref{Lem:Stetigkeit_Eintrittszeit}. The balls around $x_1$ and $x_2$ have radius $\delta_1 < \epsilon_0/4$ and $\delta_2 < \epsilon_0/4$ respectively, while the ball around $c_x(T_x)$ is of radius $< \epsilon_0/2$. Moreover, the distance between $x'$ and $x$ is at most $\epsilon_0/4$.
}
\label{Bild_Stetigkeit_Eintrittszeit}

\end{figure}
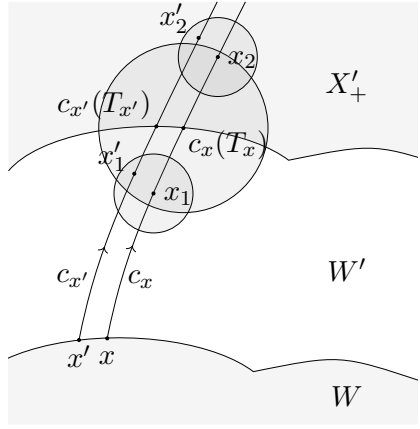

}

Let $x \in \partial W$ and $\epsilon_0 > 0$; in order to prove continuity in $x$, we have to find $\delta_0 > 0$ such that $|T_x - T_y| < \epsilon_0$ for all $y \in \partial W$ with $d(x,y) < \delta_0$. Figure \ref{Bild_Stetigkeit_Eintrittszeit} will help to illustrate the situation. Let $x_1$ be a point on $c_x$ such that $x_1 \in W' \setminus \overline{W}$ and $c_x$ enters $X'_+$ less than $\epsilon_0/2$ units of time after passing through $x_1$, i.e. $x_1 = c_x(t_{x_1})$ for some $t_{x_1} > T_x - \epsilon_0/2$. Since $W' \setminus \overline{W}$ is open, there is $\delta_1 > 0$ sucht that $B_{\delta_1}^X(x_1) \subseteq W' \setminus \overline{W}$; note that $\delta_1 \leq \epsilon_0/2$. Let $x' \in \partial W \cap B_{\epsilon_0/4}^X(x)$ be another point with existing intersection $x'_1 \in c_{x'}([0,\infty)) \cap B_{\delta_1}^X(x_1) \neq \emptyset$; similarly, $x'_1 = c_{x_1}(t_{x'_1})$ for some $t_{x'_1} < T_{x'}$. We will now assume $\delta_1 < \epsilon_0/4$ without restriction. Using $t_{x_1} = d(x, x_1) \leq d(x, x') + d(x', x'_1) + d(x'_1, x_1)$, we deduce
\[
t_{x'_1} = d(x', x'_1) \geq t_{x_1} - d(x, x') - d(x'_1, x_1) > t_{x_1} - \epsilon_0/2.
\]
Together with $t_{x_1} > T_x - \epsilon_0/2$ and $t_{x'_1} < T_{x'}$ this yields
\begin{equation}\label{Gleichung:Beweis_Stetigkeit_Eintrittszeit_1}
T_{x'} > t_{x'_1} > t_{x_1} - \epsilon_0/2 > T_x - \epsilon_0/2 - \epsilon_0/2 = T_x - \epsilon_0.
\end{equation}
On the other hand, $c_x((T_x, T_x + t))$ is in the interior of $X'_+$ for sufficiently small $t>0$. Hence there is some $x_2$ on $c_x$ in the interior of $X'_+$ such that $x_2$ is reached at most $\epsilon_0/2$ units of time after $c_x$ enters $X'_+$ (where we assume that, without restriction, $\epsilon_0$ is sufficiently small w.r.t. $t$), i.e. $x_2 = c_x(t_{x_2})$ for some $t_{x_2} < T_x + \epsilon_0/2$. Again, there is $\delta_2 > 0$ with $B_{\delta_2}^X(x_2) \subseteq X'_+$, where $\delta_2 < \epsilon_0/4$ without restriction. If again $x' \in \partial W \cap B_{\epsilon_0/4}^X(x)$ is a point with existing intersection $x'_2 \in c_{x'}([0,\infty)) \cap B_{\delta_2}^X(x_2) \neq \emptyset$, we have $x'_2 = c_{x'}(t_{x'_2})$ for some $t_{x'_2} > T_{x'}$. Similar to the previous argument, we deduce
\begin{equation}\label{Gleichung:Beweis_Stetigkeit_Eintrittszeit_2}
T_{x'} < t_{x'_2} < t_{x_2} + \epsilon_0/2 < T_x + \epsilon_0/2 + \epsilon_0/2 = T_x + \epsilon_0.
\end{equation}
Recall that
\[
c: \partial W \times \R \rightarrow X, \quad (x,t) \mapsto c(x,t) := c_x(t).
\]
is continuous. Taking the preimage of $B_{\delta_1}^X(x_1)$ under $c$ and projecting onto the factor $\partial W$ yields an open neighborhood $U_x^{(1)} \subseteq \partial W$ of $x$ in $\partial W$, such that for all $x' \in U_x^{(1)}$, the flow along $c_{x'}$ intersects the ball $B_{\delta_1}^X(x_1)$. Assuming $B_{\epsilon_0/4}^X(x) \subseteq U_x^{(1)}$ without restriction, we deduce $T_{x'} > T_x - \epsilon_0$ by inequality (\ref{Gleichung:Beweis_Stetigkeit_Eintrittszeit_1}). Similarly, using $B_{\delta_2}^X(x_2)$, we get an open neighborhood $U_x^{(2)} \subseteq \partial W$ of $x$ in $\partial W$, such that by inequality (\ref{Gleichung:Beweis_Stetigkeit_Eintrittszeit_2}), we have $T_{x'} < T_x + \epsilon_0$ for all $x' \in U_x^{(2)}$. In summary, there is an open ball $B \subseteq U_x^{(1)} \cap U_x^{(2)}$ around $x$ in $\partial W$ such that
\[
|T_x - T_{x'}| < \epsilon_0
\]
for all $x' \in B$. Now the radius $\delta_0$ of $B$ is the desired $\delta_0$ for the continuity of $T$ in $x$.
\end{proof}

In particular, we have $W' = \bigcup_{x \in \partial W} c_x( [0,T_x) )$, where the union is disjoint. Define the flow $F_W: X_+ \times [0,1] \rightarrow X_+$ via
\[
F_W(y,t) = \begin{cases}
c_x \big( (1-t)\cdot t_y + t \cdot T_x \big) & \text{if } y = c_x(t_y) \in c_x( [0,T_x) ) \subseteq W',\\
y & \text{else}.
\end{cases}
\]
By the above, this is continuous. Note that $\gamma c_x = c_{\gamma x}$ for $\gamma \in \Gamma_z$, and similarly $T_{\gamma x} = T_x$. With this we see that $F_W$ is $\Gamma_z$-equivariant, i.e. $F_W(\gamma y, t) = \gamma F_W(y,t)$ for $\gamma \in \Gamma_z$. Moreover, $F_W$ clearly defines a strong deformation retraction of $X_+$ onto $X_+ \setminus W'$, which for time $t=1$ induces a homeomorphism $F_W(\cdot, 1)_{|\partial W}$ between $\partial W$ and the corresponding boundary component $\overline{W'} \cap (X_+ \setminus W')$ of $X_+ \setminus W'$.

For the hyperbolic case -- i.e. $\pi(W)$ is a tube of $M$ -- let $A \subseteq W$ be the unique axis of the elements of $\Gamma_W$. Denoting the projection geodesic from $x \in \partial W$ to $\pi_A(x) \in A$ by $c_x$, where $c_x(0) = x$ and $c_x(-d(x, \pi_A(x))) = \pi_A(x)$, we can repeat the above constructions, namely defining the entry time $T_x$ and the corresponding flow $F_W$. Again, $F_W$ will be a strong deformation retraction of $X_+$ onto $X_+ \setminus W'$, inducing a homeomorphism between $\partial W$ and the respective boundary component $\overline{W'} \cap (X_+ \setminus W')$ of $X_+ \setminus W'$ for time $t=1$; the arguments are completely analogous to the parabolic case.

Having defined a suitable flow $F_W$ for every component $W$ of $X_-$ separately, we summarize the above constructions in the general setting:

\begin{lem}\label{Lem:DefinitionEigenschaftenF}
The map $F: X_+ \times [0,1] \rightarrow X_+$, given by
\[
F(y,t) := \begin{cases}
F_W(y,t) & \text{if } y \in W' \in \mathcal{W},\\
y & \text{else},
\end{cases}
\]
is a well-defined strong deformation retraction of $X_+$ onto $X'_+$. $F$ is $\Gamma$-equivariant and for time $t=1$ induces a homeomorphism $F(\cdot,1)_{|\partial X_+}$ between $\partial X_+$ and $\partial X'_+$.
\end{lem}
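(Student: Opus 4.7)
The plan is to piece together the properties of $F$ from the corresponding properties of the individual flows $F_W$ established in the preceding construction. Well-definedness of $F$ reduces to disjointness of the sets $W' \in \mathcal{W}$: for distinct components $W_1 \neq W_2$ of $X_-$, Lemma~\ref{Lem:AbstandKomponentenW} gives $d(W_1, W_2) \geq \delta$, whence $(W_1)_{\delta/4} \cap (W_2)_{\delta/4} = \emptyset$ and in particular $W'_1 \cap W'_2 = \emptyset$; so every $y \in X_+$ lies in at most one $W'$ and the two clauses of the definition never collide.

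Continuity within each $W'$ follows from continuous dependence of $c_x$ on $x$ together with Lemma~\ref{Lem:Stetigkeit_Eintrittszeit}, while outside $\bigcup \mathcal{W}$ the map $F$ is the identity. The only nontrivial gluing occurs at the outer boundary $\partial W' \cap \partial X'_+$, i.e.\ at points of the form $y_0 = c_x(T_x)$: approaching $y_0$ from within $W'$ along a sequence $y_n = c_{x_n}(t_{y_n})$ with $x_n \to x$ and $t_{y_n} \to T_x$, the convex combination $(1-t)t_{y_n} + tT_{x_n}$ converges to $T_x$ uniformly in $t \in [0,1]$, so $F(y_n, t) \to c_x(T_x) = y_0 = F(y_0, t)$, while approaching from outside $W'$ gives $F(y_n, t) = y_n \to y_0$. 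The strong deformation retraction property then drops out: $F(\cdot, 0) = \id$; $F(\cdot, 1)$ lands in $X'_+$, since either $y \in X'_+$ already or $y \in W'$ and $F_W(y,1) = c_x(T_x) \in \partial X'_+$; and each $y \in X'_+$ is fixed throughout the homotopy because $X'_+ \cap W' = \emptyset$ for every $W \in \mathcal{W}$.

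For $\Gamma$-equivariance, $\Gamma$ permutes the components of $X_-$ by Lemma~\ref{Lem:BGS10.2Analogon}, so it permutes $\mathcal{W}$ via $\gamma W' = (\gamma W)'$. The construction of $c_x$ is natural under isometries: in the parabolic case $\gamma$ sends the fixed point $z$ of $\Gamma_W$ to the fixed point $\gamma z$ of $\Gamma_{\gamma W}$, and in the hyperbolic case the axis $A$ to $\gamma A$; in either case $\gamma c_x = c_{\gamma x}$ and $T_{\gamma x} = T_x$, yielding $F_{\gamma W}(\gamma y, t) = \gamma F_W(y, t)$ and upgrading the $\Gamma_W$-equivariance of each $F_W$ to full $\Gamma$-equivariance of $F$. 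The homeomorphism $F(\cdot, 1)|_{\partial X_+} \colon \partial X_+ \to \partial X'_+$ finally assembles from the individual homeomorphisms $F_W(\cdot, 1)|_{\partial W}$ recorded at the end of each case discussion, since $\partial X_+$ and $\partial X'_+$ decompose as the disjoint unions of the corresponding components. The main technical obstacle is precisely the uniform-in-$t$ continuity at the outer boundary $\partial X'_+$, which relies on Lemma~\ref{Lem:Stetigkeit_Eintrittszeit}; every other verification is formal once disjointness of the $W'$ is secured.
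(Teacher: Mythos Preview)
Your proof is correct and follows essentially the same approach as the paper's: well-definedness from the disjointness of the $W'$ (via Lemma~\ref{Lem:AbstandKomponentenW}), the retraction and homeomorphism statements assembled from those of the individual $F_W$, and $\Gamma$-equivariance from the $\Gamma_W$-equivariance of each $F_W$ together with the fact that $\gamma \in \Gamma \setminus \Gamma_W$ carries $W$ to another component. You supply more detail than the paper does---in particular the explicit continuity check at the outer boundary $\partial X'_+$, which the paper subsumes into the earlier remark that $F_W$ is continuous---but there is no difference in strategy.
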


\begin{proof}
Recall that by choice of $\delta$, the different $W' \in \mathcal{W}$ don't intersect, so $F$ is indeed well-defined. As every $F_W$ retracted onto $X_+ \setminus W'$, the map $F$ retracts onto $X'_+ = X_+ \setminus \bigcup_{W' \in \mathcal{W}} W'$. Similarly, the homeomorphism statement for $F(\cdot, 1)_{|\partial X_+}$ follows. Note that the $\Gamma$-equivariance of $F$ can be deduced from the $\Gamma_W$-equivariance of the $F_W$'s together with the fact that isometries $\gamma \in \Gamma \setminus \Gamma_W$ interchange $W$ with another component $V$ of $X_-$.
\end{proof}

Since $F$ is $\Gamma$-equivariant, it descends to a similar flow $f$ in $M$.

\begin{lem}\label{Lem:Sauer3.13Analogon}
The map $f: M_+ \times [0,1] \rightarrow M_+$ is a strong deformation retraction of $M_+$ onto $M'_+$, which for time $t=1$ induces a homeomorphism $f(\cdot,1)_{|\partial M_+}$ between $\partial M_+$ and $\partial M'_+$. {\hfill$\square$}
\end{lem}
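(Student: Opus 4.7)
The plan is to obtain $f$ simply by passing $F$ to the quotient, and then to verify that each property listed for $F$ in Lemma \ref{Lem:DefinitionEigenschaftenF} survives the descent. Write $\pi: X \to M$ for the covering projection. Because $X_+$ and $X'_+$ are $\Gamma$-invariant (they are unions of orbits, as $d_\Gamma$ is $\Gamma$-invariant) and $\Gamma$ acts freely and properly discontinuously on $X$, the restrictions $\pi_{|X_+}: X_+ \to M_+$ and $\pi_{|X'_+}: X'_+ \to M'_+$ are covering maps. The $\Gamma$-equivariance of $F$ (with trivial action on the interval factor) means that $\pi \circ F: X_+ \times [0,1] \to M_+$ factors through the orbit space of the diagonal action, yielding a well-defined continuous map
\[
f: M_+ \times [0,1] \longrightarrow M_+, \qquad f(\pi(y),t) := \pi(F(y,t)),
\]
where continuity is ensured because $\pi \times \id_{[0,1]}$ is a quotient map (the product of a covering with a locally compact space is still a quotient map).

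Next I would check the deformation-retraction properties by reducing each one to the corresponding statement for $F$. For every $y \in X_+$ we have $F(y,0) = y$, so $f(p,0) = p$ for all $p \in M_+$; similarly $F(y,1) \in X'_+$ gives $f(p,1) \in M'_+$; and $F$ is the identity on $X'_+$ at every time $t$, so $f$ restricts to the identity on $M'_+ \times [0,1]$. Thus $f$ is a strong deformation retraction of $M_+$ onto $M'_+$.

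For the boundary statement, observe that $\partial X_+$ and $\partial X'_+$ are $\Gamma$-invariant, and the preceding lemma gives a $\Gamma$-equivariant homeomorphism $\Phi := F(\cdot,1)_{|\partial X_+}: \partial X_+ \to \partial X'_+$. Since $\Gamma$ acts freely and properly discontinuously on both sides and $\Phi$ intertwines the actions, it descends to a continuous bijection $\overline{\Phi}: \partial X_+/\Gamma \to \partial X'_+/\Gamma$, i.e.\ $\partial M_+ \to \partial M'_+$, which agrees with $f(\cdot,1)_{|\partial M_+}$. Its continuous inverse is obtained by descending $\Phi^{-1}$ in the same way, using its $\Gamma$-equivariance; alternatively one notes that $\overline{\Phi}$ is a local homeomorphism (it lifts to the homeomorphism $\Phi$ through the covering projections) between compact Hausdorff spaces, hence a homeomorphism.

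The only subtle point, and the one I would expect to be the main obstacle, is confirming that $\partial M_+$ really equals $\pi(\partial X_+) \cong \partial X_+/\Gamma$ and similarly for $M'_+$, so that the descended map has the claimed domain and target. This follows from the fact that $\pi$ is a local diffeomorphism mapping $X_+$ onto $M_+$ and $X \setminus X_+$ onto $M \setminus M_+$: interior points of $X_+$ project to interior points of $M_+$, boundary points to boundary points, and the precise invariance of the components of $X_-$ from Lemma \ref{Lem:BGS10.2Analogon} ensures that no identifications collapse boundary onto interior. Once this identification is in place, the rest is a purely formal descent argument.
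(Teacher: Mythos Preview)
Your proof is correct and follows exactly the approach the paper intends: the paper simply remarks ``Since $F$ is $\Gamma$-equivariant, it descends to a similar flow $f$ in $M$'' and states the lemma without proof, so your write-up just fills in the standard descent details that the paper leaves implicit.
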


\subsection{Stable covering sets}

Our aim is to construct a good cover of the (shrunken) thick part $M'_+$. For the covered subspace to be homotopy equivalent to $M'_+$, we need it to be stable under the flow $f$ retracting $M_+$ onto $M'_+$; this is because the covering sets will in general exceed $M'_+$ and we thus need to push them back. By \textit{stable under the flow} we simply mean that if the flow $f$ enters a covering set, it will remain inside the cover until it reaches $\partial M'_+$, where we stop flowing. To achieve stability of a covering set, we will simply cut of its part outside $M'_+$ and then extend the remaining set along the flow lines given by $f$. Note that equivalently, we could treat the situation in $X$ with flow $F$, as long as the construction remains $\Gamma$-equivariant.

More precisely, let $B_r^M(p)$ be a ball of radius $r$ in $M$ around $p \in M'_+$. Here,
\[
r := \delta(n)/4
\]
is fixed; note that by choice of $r$, the ball $B_r^M(p)$ is convex and intersects at most one boundary component of $M'_+$. If $B_r^M(p) \subseteq M'_+$, we won't change $B_r^M(p)$. On the other hand, if $B_r^M(p) \setminus M'_+ \neq \emptyset$, then $A := B_r^M(p) \cap \partial M'_+$ is a nonempty set, which is entirely contained in a unique component of $\partial M'_+$. Let $\pi(W')$ (for $W' \in \mathcal{W}$) and $W$ denote the corresponding components of $M_+ \setminus M'_+$ and $X_-$, respectively, i.e. $B_r^M(p) \cap \pi(W') \neq \emptyset$.

The geodesics $c_x$ of the points $x \in A' := \{ x \in \partial W \,|\, \pi(c_x(T_x)) \in A \}$ all meet points of $\partial X'_+$ at entry time $T_x$, which -- after projecting to $M$ -- correspond to points in $A$. Let
\[
A_S := \pi\Big( \bigcup\limits_{x \in A'} c_x\big( (T_x/2, T_x) \big) \Big),
\]
which is what we previously meant by \textit{extension along the flow lines}. The final covering set is now obtained by setting
\[
B' := \big( B_r^M(p) \cap M'_+ \big) \cup A_S.
\]
Hence $B'$ is an open set which is stable under the flow $f$, i.e. $f(y,t) \in B'$ for all $t \geq t_0$, if $f(y,t_0) \in B'$. Figure \ref{Bild_Konstruktion_Ueberdeckungsmengen} illustrates the construction.

{
\begin{figure}
\centering

\begin{tikzpicture}[scale=0.75]


\begin{scope}[xshift=-6cm]

\clip[xshift=-3cm, yshift=+2cm] (0,-3) -- (7.5,-3) -- (7.5,3) -- (0,3) -- cycle;


\fill[color=gray!0, xshift=-3cm, yshift=+2cm] (7.5,-3) -- (8.5, 3) -- (0.5, 3) -- (-0.5,-3) -- cycle; 


\foreach \s in {1, 2, 3, 4, 5, 6, 7, 8, 9, 10, 11, 12, 13}
      \draw[ultra thin, xshift=0 cm, yshift=0 cm,decoration={markings, mark=at position 0.7 with {\arrow{<}}},
        postaction={decorate}] (-2+0.3*\s,3) .. controls (-2.3+0.3*\s,2.5) and (-2.7+0.3*\s,2) .. (-3.2+0.3*\s,0);


\fill[color=gray!8, xshift=-3cm, yshift=+2cm] (0,0) arc (160:30:2.75cm and 1.2cm) .. controls (6.5, 0.5) .. (8,0) -- (8.5, 3) -- (0, 3) -- cycle; 


\fill[color=gray!8, xshift=-3cm, yshift=+2cm] (-0.35,-2.1) arc (160:30:2.75cm and 1.2cm) .. controls (6.15, -1.6) .. (7.65,-2.1) -- (7.5, -3) -- (-0.5, -3) -- cycle; 


\coordinate (p) at (-0.13,2.95); 

\draw[ultra thin, xshift=-3cm, yshift=+2cm] (0,0) arc (160:30:2.75cm and 1.2cm) .. controls (6.5, 0.5) .. (8,0); 

\draw[ultra thin, xshift=-3cm, yshift=+2cm] (-0.35,-2.1) arc (160:30:2.75cm and 1.2cm) .. controls (6.15, -1.6) .. (7.65,-2.1); 

\draw[very thin, fill=gray, fill opacity=0.1] (p) circle (1.5cm); 

\filldraw (p) circle (0.75pt) node[above left, xshift=1pt, yshift=-3pt] {$p$}; 

\node at (3.625,4.25) {$M'_+$}; 
\node at (3.625,1.25) {$M_+$}; 
\node at (3.625,-0.5) {$M_-$}; 

\node at (-0.1, 4) {$B_r^M(p)$};

\end{scope}


\begin{scope}[xshift=3cm]

\clip[xshift=-3cm, yshift=+2cm] (0,-3) -- (7.5,-3) -- (7.5,3) -- (0,3) -- cycle;


\fill[color=gray!0, xshift=-3cm, yshift=+2cm] (7.5,-3) -- (8.5, 3) -- (0.5, 3) -- (-0.5,-3) -- cycle; 


\foreach \s in {1, 2, 3, 4, 5, 6, 7, 8, 9, 10, 11, 12, 13}
      \draw[ultra thin, xshift=0 cm, yshift=0 cm,decoration={markings, mark=at position 0.7 with {\arrow{<}}},
        postaction={decorate}] (-2+0.3*\s,3) .. controls (-2.3+0.3*\s,2.5) and (-2.7+0.3*\s,2) .. (-3.2+0.3*\s,0);


\fill[color=gray!8, xshift=-3cm, yshift=+2cm] (0,0) arc (160:30:2.75cm and 1.2cm) .. controls (6.5, 0.5) .. (8,0) -- (8.5, 3) -- (0, 3) -- cycle; 


\fill[color=gray!8, xshift=-3cm, yshift=+2cm] (-0.35,-2.1) arc (160:30:2.75cm and 1.2cm) .. controls (6.15, -1.6) .. (7.65,-2.1) -- (7.5, -3) -- (-0.5, -3) -- cycle; 


\coordinate (p) at (-0.13,2.95); 

\draw[ultra thin, xshift=-3cm, yshift=+2cm] (0,0) arc (160:30:2.75cm and 1.2cm) .. controls (6.5, 0.5) .. (8,0); 

\draw[ultra thin, xshift=-3cm, yshift=+2cm] (-0.35,-2.1) arc (160:30:2.75cm and 1.2cm) .. controls (6.15, -1.6) .. (7.65,-2.1); 

\draw[very thin, fill=gray, fill opacity = 0.1] (-2.15,1.5) arc (130:52.5:2.4cm and 1.2cm) .. controls (1.1,2.12) .. (1.31,2.52) arc (-17:191:1.5cm) .. controls (-1.9,2.15) and (-1.98,1.98) .. cycle; 


\filldraw (p) circle (0.75pt) node[above left, xshift=1pt, yshift=-3pt] {$p$}; 

\node at (3.625,4.25) {$M'_+$}; 
\node at (3.625,1.25) {$M_+$}; 
\node at (3.625,-0.5) {$M_-$}; 

\node at (-0.2, 4) {$B'$};

\end{scope}

\end{tikzpicture}

\caption{
The stable covering sets are constructed out of balls by cutting off the part outside $M'_+$ and extending the remaining part along the flow lines.
}
\label{Bild_Konstruktion_Ueberdeckungsmengen}

\end{figure}
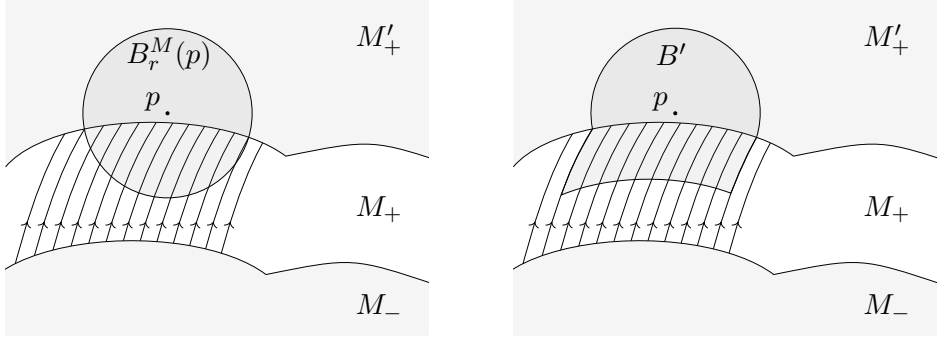
}

\subsection{Cutting and refining}

As mentioned previously, cutting off the part of a ball $B_r^M(p)$ outside $M'_+$ might lead to the resulting stable covering set $B'$ (or rather their intersections) no longer being contractible. In this section, we will show how to deal with this issue. Recall that by our choice of radius $r$, the initial balls $B_r^M(p)$ are isometric to their lifts $B_r^X(x)$ in $X$, where $\pi(x) = p$ (and $p \in M'_+$); moreover, a stable covering set $B'$ is homotopy equivalent to its part in $M'_+$ (and similarly for their lifts in $X$). Thus in order to study (the homotopy type of) a stable covering set $B'$, we can equivalently look at the set $B_r^X(x) \cap X'_+$ (and similarly for intersections) and will also denote the latter set by $B'$, for simplicity.

Note that we have $(X_-)_{\delta/4} = \bigcup_{\gamma \in \Gamma \setminus\{\id\}} ( \{ d_{\gamma} < \epsilon \} )_{\delta/4}$ and thus
\[
B' = B_r^X(x) \setminus \bigcup\limits_{\gamma \in \Gamma \setminus\{\id\}} ( \{ d_{\gamma} < \epsilon \} )_{\delta/4},
\]
since $X'_+ = X \setminus (X_-)_{\delta/4}$. The following lemma serves as a first step to studying the sets in the above equation, as well as their relationship.

\begin{lem}\label{Lem:Eigenschaften_Konstruktion_B'}
We have:
\begin{enumerate}
\item The sets $( \{ d_{\gamma} < \epsilon \} )_{\delta/4}$ are convex.

\item There is a constant $\kappa = \kappa(n) \in \N$ only depending on $n$, such that the number of sets $( \{ d_{\gamma} < \epsilon \} )_{\delta/4}$ intersecting $B_r^X(x)$ nontrivially is at most $\kappa$.

\item For all $x \in \partial W$, the function $[0,\infty) \rightarrow \R_{\geq 0}$, $t \mapsto d(c_x(t), \{ d_{\gamma} < \epsilon \} )$ (where $\gamma \in \Gamma_W \setminus \{\id\}$) is strictly increasing in $t$ (again, $c_x$ denotes a geodesic of the flow $F$).

\item If $c_x(t_0) \in ( \{ d_{\gamma} < \epsilon \} )_{\delta/4}$ (where $\gamma \in \Gamma_W \setminus \{\id\}$) for some $t_0 \in \R$ and $x \in \partial W$, then $c_x(t) \in {( \{ d_{\gamma} < \epsilon \} )_{\delta/4}}$ for all $t \leq t_0$.

\item For all $x \in \partial W$ the function $[0,\infty) \rightarrow \R_{\geq 0}$, $t \mapsto d(c_x(t), W)$ is strictly increasing in $t$.
\end{enumerate}
\end{lem}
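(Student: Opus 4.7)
The plan is to handle the five claims in order: (1) and (2) are convex-geometric and packing estimates, while (3)--(5) all reduce to the monotonicity of $t \mapsto d_\gamma(c_x(t))$ recalled in section \ref{Kapitel:Grundlagen}.

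For (1), $d_\gamma$ is convex on the Hadamard manifold $X$, so $\{d_\gamma < \epsilon\}$ is convex; and the distance function to a convex set in a Hadamard space is itself convex, so $(\{d_\gamma < \epsilon\})_{\delta/4}$ is convex as a sublevel set of this distance. For (2), suppose $B_r^X(x) \cap (\{d_\gamma < \epsilon\})_{\delta/4} \neq \emptyset$; picking a witness point and using the triangle inequality yields $d_\gamma(x) < 2(r+\delta/4) + \epsilon = \delta + \epsilon$, so $\gamma x \in B_{\delta+\epsilon}^X(x)$. Since $x$ is a lift of a point in $M'_+ \subseteq M_+$, we have $d_\Gamma(x) \geq \epsilon$; hence the orbit points $\gamma x$ for the relevant $\gamma$'s are pairwise $\epsilon$-separated. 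Bishop--Gromov volume comparison under $K \geq -1$ then bounds their number by a constant $\kappa = \kappa(n)$ depending only on $n$, since both $\delta = \delta(n)$ and $\epsilon = \epsilon(n)/4$ depend only on $n$.

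For (3), fix $\gamma \in \Gamma_W \setminus \{\id\}$, put $C := \{d_\gamma < \epsilon\}$ and $f(t) := d(c_x(t), C)$. In either the hyperbolic case ($c_x$ leaves $A$ radially) or the parabolic case ($c_x$ points away from the fixed point $z$), the preliminaries give that $t \mapsto d_\gamma(c_x(t))$ is strictly increasing on $[0,\infty)$. Since $x \in \partial W$ lies outside the open set $C$, $d_\gamma(x) \geq \epsilon$, so $d_\gamma(c_x(t)) > \epsilon$ and $c_x(t) \notin \overline{C}$ for $t > 0$. The function $f$ is convex by (1), non-negative, and strictly positive on $(0,\infty)$; if $f(0) = 0$, convexity alone forces strict monotonicity. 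In the remaining case $x \notin \overline{C}$ I would use the first variation formula $f'(t) = -\cos \angle(c_x'(t), v_t)$, where $v_t$ is the direction from $c_x(t)$ to its foot point in $\overline{C}$, and argue that this angle is obtuse because the foot point must lie on the side of $c_x(t)$ toward $A$ (resp.\ toward the horoballs around $z$). This foot-point analysis is the step I expect to require the most care; it hinges on convexity of $C$ and on the fact that $C$ accumulates only toward $A$ (resp.\ $z$).

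Part (4) then follows immediately from (3): $(C)_{\delta/4} = \{f < \delta/4\}$, and the preimage of $(-\infty, \delta/4)$ under a strictly increasing function is an interval of the form $(-\infty, t^*)$, giving the desired downward stability along $c_x$. For (5), write $d(c_x(t), W) = \inf_{\gamma' \in \Gamma_W \setminus \{\id\}} d(c_x(t), \{d_{\gamma'} < \epsilon\})$. For $0 \leq t_1 < t_2$ pick $\gamma^*$ (nearly) realising the infimum at $t_2$; applying (3) to $\gamma^*$ gives $d(c_x(t_1), \{d_{\gamma^*} < \epsilon\}) < d(c_x(t_2), \{d_{\gamma^*} < \epsilon\})$, and since $\{d_{\gamma^*} < \epsilon\} \subseteq W$ we also have $d(c_x(t_1), W) \leq d(c_x(t_1), \{d_{\gamma^*} < \epsilon\})$; passing to the infimum on the right delivers the strict inequality $d(c_x(t_1), W) < d(c_x(t_2), W)$.
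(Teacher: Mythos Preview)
Parts (1), (2), (4) and (5) are handled essentially as in the paper. One small point on (5): your ``(nearly)'' hedge only yields $d(c_x(t_1),W)\le d(c_x(t_2),W)$; for the strict inequality you need the infimum at $t_2$ to be \emph{attained} by some $\gamma^\ast$, which follows from the local finiteness established in (2).

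The genuine gap is in (3), exactly where you flag it. In the hyperbolic case the first-variation route is unnecessary: since $c_x$ passes through $A\subseteq C$, one has $f(t_0)=0$ for some $t_0<0$, and convexity of $f$ together with $f>0$ on $(t_0,\infty)$ already forces strict increase on $[0,\infty)$. The substantive case is the parabolic one with $c_x\cap\overline{C}=\emptyset$, which genuinely occurs under the visibility hypothesis (unlike in pinched curvature) because $d_\gamma(c_x(t))$ need not tend to $0$ as $t\to-\infty$. Here your claim that the foot point $\pi_{\overline C}(c_x(t))$ lies ``toward $z$'' is correct but unproved, and it is the whole content of this case. The paper supplies the missing idea: orthogonal projection onto a closed horoball $HB$ around $z$ is strictly distance-decreasing off $HB$ and maps $\overline{C}$ into $C$ (since $d_\gamma$ strictly decreases along geodesics to $z$). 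Taking $HB$ with $c_x(t)\in\partial HB$ gives $f(t')<f(t)$ for suitable $t'<t$, so the convex function $f$ has no interior minimum on $\R$ and is therefore strictly increasing. The same horoball projection is precisely what would justify your obtuse-angle claim, so the two arguments rest on the same geometric input; the paper just uses it directly rather than via first variation.
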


\begin{proof}
\begin{enumerate}
\item Observe that $( \{ d_{\gamma} < \epsilon \} )_{\delta/4} = ( \{ d_{\gamma} \leq \epsilon \} )_{\delta/4}$, and the latter set can be seen as a sublevel set of the convex (see \cite{BGS} Section 1.6 Exercise (iii)) function $d(\cdot, \{ d_{\gamma} \leq \epsilon \})$.

\item Let $\gamma \in \Gamma \setminus \{ \id \}$ such that $B_r^X(x) \cap ( \{ d_{\gamma} < \epsilon \} )_{\delta/4} \neq \emptyset$, so $d(y,y')$ for some $y \in B_r^X(x)$ and $y' \in \{ d_{\gamma} < \epsilon \}$. Hence
\[
d_{\gamma}(x) \leq d_{\gamma}(y') + 2 d(x,y') < \epsilon + 2r + \delta/2 < \epsilon(n)
\]
by choice of $\epsilon$, $r$ and $\delta$. Since $x \in X'_+ \subseteq X_+$ (i.e. $d_{\Gamma}(x) \geq \epsilon$), we see that there can be at most $N := N(n,\epsilon,\epsilon(n))$ such $\gamma$ (see \cite{Sauer} Lemma 3.2 for the definition of $N$). As $N$ depends only on $n$, we conclude that $\kappa(n) := \lceil N \rceil$ satisfies the statement.

\item First, assume further that $x \in \partial \{ d_{\gamma} < \epsilon \}$. In 1. we saw that for $D := \{ d_{\gamma} < \epsilon \}$, the distance function $d_D$ is convex. By our parametrization we have $c_x(0) = x \in \partial D$, so $d_D(c_x(0)) = 0$. As $t \mapsto d_{\gamma}(c_x(t))$ is strictly increasing, also $d_D(c_x(t)) > 0$ for all $t > 0$. Hence $t \mapsto d_D(c_x(t))$ is a convex function with strict minimum in $0$, thus strictly increasing.

This already proves the statement in the case of tubes (i.e. $\pi(W)$ is a tube in $M$), since $c_x$ always intersects $\partial D$ in that situation.

In the following, we will thus assume that $W$ projects onto a cusp; let $z \in X(\infty)$ be the parabolic fixed point. Moreover, we're only left with treating the situation where $c_x$ and $D$ do not intersect\footnote{This can not happen in the pinched curvature case. There, $d_{\gamma}(c_x(t)) \rightarrow 0$ as $t \rightarrow -\infty$ for all $\gamma \in \Gamma_W \setminus \{ \id \}$, so $c_x$ and $D$ would always intersect.}.

Let $x'$ be the projection point of $x$ onto $\overline{D} = \{ d_{\gamma} \leq \epsilon \}$ (so $d(x, D) = d(x,x')$) and $HB$ be a horoball around $z$, not containing either of $x$ and $x'$; denote the projection onto $HB$ by $\pi_{HB}$. Note that $\pi_{HB}(x') \in D$ and $\pi_{HB}(x) = c_x(t_0)$ for some $t_0 < 0$. As the projection is (strictly) distance decreasing (see \cite{BO'N} Proposition 3.4), we obtain
\begin{align*}
d(c_x(t_0), D) &\leq d(c_x(t_0), \pi_{HB}(x')) = d(\pi_{HB}(x), \pi_{HB}(x')) \\
&< d(x, x') = d(c_x(0), D).
\end{align*}
In particular, the convex function $t \mapsto d(c_x(t), D)$ defined on all of $\R$ is not constant (and thus also has no local/global maximum). If there was a local minimum in $t_1 \in \R$, then by convexity, it would be global. But the argument analogous to the above inequality (with $c_x(t_1)$ instead of $c_x(0) = x$) would show that the projection of $c_x(t_1)$ onto a suitable horoball $HB'$ around $z$ had a smaller distance to $D$ than $c_x(t_1)$, contradicting the assumption that $c_x(t_1)$ was a global minimum of $t \mapsto d(c_x(t), D)$.

Consequently, the convex function $t \mapsto d(c_x(t), D)$ defined on all of $\R$ has no (local or global) extrema. In view of the above inequality, the only possible scenario is thus that $t \mapsto d(c_x(t), D)$ is strictly increasing, which then also holds for the restriction to $[0,\infty)$.

\item This is a consequence of 3.

\item As $W = \bigcup_{\gamma \in \Gamma_W \setminus \{\id\}} \{ d_{\gamma} < \epsilon \}$, this also follows from 3.
\end{enumerate}
\end{proof}

So the above lemma tells us that $B'$ is obtained from $B_r^X(x)$ by removing convex subsets of it, where the number of convex subsets removed is bounded by $\kappa = \kappa(n)$. Still, how the convex subsets lie inside the ball (or an intersection of several balls) could be quite complicated, resulting in a complicated $B'$. We thus have to make sure that the situation still is well-behaved; this is done in the following lemma.

\begin{lem}\label{Lem:Durchschnitt_Kugeln_Ausschneidung_homoeomorph_Halbkugel}
Let $B_i := B_r^X(x_i)$ ($i=1,\ldots,k$) be balls as in the present situation, where $\bigcap_{i=1}^k B_i \setminus ( \{ d_{\gamma} < \epsilon \} )_{\delta/4} \neq \emptyset$. Then
\[
\Big( \bigcap\limits_{i=1}^k B_i , \bigcap\limits_{i=1}^k B_i \cap ( \{ d_{\gamma} < \epsilon \} )_{\delta/4} \Big) \cong (D^n, D^n_+)
\]
are homeomorphic as pairs. Here $D^n$ denotes the open unit ball in $\R^n$ and $D^n_+ := \{ x \in D^n : x_1 > 0 \}$ half a ball.
\end{lem}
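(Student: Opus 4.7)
Denote $C := \bigcap_{i=1}^k B_i$ (open, bounded, and convex as a finite intersection of convex balls in the Hadamard manifold $X$) and $H := (\{d_\gamma < \epsilon\})_{\delta/4}$ (open and convex by Lemma~\ref{Lem:Eigenschaften_Konstruktion_B'}(1)). The hypothesis gives $C \setminus H \neq \emptyset$; for the statement to be non-vacuous we also assume $C \cap H \neq \emptyset$, which together with connectedness of $C$ forces $\partial H \cap C \neq \emptyset$. My plan is to exhibit $(C, C \cap H) \cong (D^n, D^n_+)$ in two stages: first topologically identify $\partial H \cap C$ as a separating $(n-1)$-disk inside $C$, then straighten everything to a flat hyperplane in $\R^n$ via exponential coordinates.

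\textbf{Stage 1 (separating hypersurface).} Since $\overline H$ is a closed convex set with non-empty interior in $X$, its boundary $\partial H$ is a codimension-one topological submanifold of $X$, locally the graph of a convex function over any supporting half-space. I argue that $\partial H \cap C$ is a connected, topologically embedded open $(n-1)$-disk in $C$, separating $C$ into the two open pieces $C \cap H$ (convex, hence connected) and $C \setminus \overline H$. Connectedness of the latter is the delicate point: in general a convex set can disconnect a bounded convex set (think of a slab cutting through a disk), so one must genuinely exploit both that $C$ has small diameter $\leq 2r = \delta/2$ and that $\overline H$ has the concrete shape of the $\delta/4$-thickening of a tubular neighborhood of an axis or of a horoball --- precisely the monotonicity content encoded in parts (3), (4), (5) of Lemma~\ref{Lem:Eigenschaften_Konstruktion_B'}.

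\textbf{Stage 2 (straightening).} Fix $p \in \partial H \cap C$ and let $v \in T_p X$ be an outward unit normal to a supporting half-space of $\overline H$ at $p$. Using the diffeomorphism $\exp_p^{-1}: X \to T_p X \cong \R^n$, transport the pair to $(\exp_p^{-1}(C), \exp_p^{-1}(C \cap H))$, choosing linear coordinates on $T_p X$ so that $v = e_1$. The local graph structure of $\partial H$ allows one to construct a shear-type self-homeomorphism of $T_p X$ of the form $(x_1, x_2, \ldots, x_n) \mapsto (x_1 - h(x_2, \ldots, x_n), x_2, \ldots, x_n)$ that flattens $\exp_p^{-1}(\partial H \cap C)$ onto a piece of the hyperplane $\{x_1 = 0\}$; the image of the pair then becomes $(V, V \cap \{x_1 > 0\})$ for some bounded open $V \subseteq \R^n$ with the origin on $V \cap \{x_1 = 0\}$. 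A final radial homeomorphism from $0$ inside $V$ identifies this pair with $(D^n, D^n_+)$, completing the proof.

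The main obstacle is Stage 1, specifically verifying that $C \setminus \overline H$ is connected and that $\partial H \cap C$ is a single topological disk rather than several components. Both assertions rely essentially on the small diameter of $C$ together with the convexity and specific shape of $\overline H$, and the careful argument has to split into the tubular and horoball cases, using in each the respective flow lines of $F$.
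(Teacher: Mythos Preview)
Your proposal correctly isolates the heart of the matter --- showing that $C \setminus \overline{H}$ is connected, equivalently that $\partial H \cap C$ is a single disk --- but then stops short of proving it. What you have written is an outline that ends by restating the obstacle; the sentence ``the careful argument has to split into the tubular and horoball cases, using in each the respective flow lines of $F$'' is a promissory note, not an argument. Moreover, it is not clear that the flow-line monotonicity of Lemma~\ref{Lem:Eigenschaften_Konstruktion_B'} would settle Stage~1 by itself: that monotonicity tells you each flow line crosses $\partial H$ at most once, but $C$ is not a flow tube, so knowing the behavior along individual flow lines does not directly control the connectedness of $C \setminus \overline{H}$.

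The paper's proof of Stage~1 is quite different from what you anticipate and does \emph{not} split into cases. Write $D := \{d_\gamma < \epsilon\}$ and grow the thickenings $(D)_t$ from $t=0$ upward. Since each $x_i \in X'_+$, one has $d(x_i, D) \geq \delta/4$, so there is a smallest $t_0 \in [0,\delta/4)$ at which $\overline{(D)_{t_0}}$ first touches $\overline{B_i}$, and by convexity this first contact is a single point. If the full thickening $(D)_{\delta/4}$ met $\partial B_i$ in a second component, then for some $t' \in (t_0,\delta/4)$ the hypersurface $\partial (D)_{t'}$ would be tangent to $\partial B_i$ at a point $x'$ in that second component. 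Now $\partial(D)_{t'}$ is locally $\partial B_{t'}^X(y')$ where $y' = \pi_{\overline D}(x')$, so by the Gauss lemma the outward unit normals to $\partial B_i$ and to $\partial B_{t'}^X(y')$ at $x'$ coincide; hence the geodesic segments $[x_i,x']$ (length $r$) and $[y',x']$ (length $t'$) share the same direction at $x'$ and one contains the other. Either way one obtains $d(x_i, y') < \delta/4$ with $y' \in \partial D \subseteq \overline{X_-}$, contradicting $x_i \in X'_+$. This uses only $r \leq \delta/4$ and $d(x_i, D) \geq \delta/4$ --- no flow lines, no tube/cusp dichotomy --- and extends immediately to $\bigcap_i B_i$ because $\partial\bigl(\bigcap_i B_i\bigr)$ is made up of pieces of the $\partial B_i$.

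Your Stage~2 also has a gap: after transporting by $\exp_p^{-1}$ the set $\exp_p^{-1}(H)$ need not be convex, so $\exp_p^{-1}(\partial H \cap C)$ need not be a global graph over $\{x_1=0\}$, and after your shear the image $V$ need not be star-shaped from the origin, so the ``final radial homeomorphism'' is not justified. Once Stage~1 is established, however, this step is routine and can be handled in several ways.
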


\begin{proof}
Some intuition for the situation can be gained from Figure \ref{Bild_Kleine_Kugel_grosse_Kugel}.

{
\begin{figure}
\centering

\begin{tikzpicture}[scale=0.75]



\fill[color=gray!2, xshift=0cm, yshift=0cm] (0,0) .. controls (1.25,2.5) and (3.25, 3.25) .. (4,3.5) -- (4, 0) -- cycle; 



\fill[color=gray!8, xshift=0cm, yshift=0cm] (0,0) .. controls (1.25,2.5) and (3.25, 3.25) .. (4,3.5) -- (4, 6) -- (-3, 6) -- (-3, 0) -- cycle; 


\coordinate (x) at (-1,3); 


\draw[ultra thin, xshift=0cm, yshift=0cm] (0,0) .. controls (1.25,2.5) and (3.25, 3.25) .. (4,3.5); 

\draw[ultra thin, fill=gray, fill opacity=0.1] (x) circle (1.5cm); 

\draw[very thin, dashed, xshift=0cm, yshift=0cm] (-2,0) .. controls (-0.5,3.5) and (1.5, 4.5) .. (4,5.5); 

\draw [decorate,decoration={brace,amplitude=5pt},xshift=0pt,yshift=0pt]
(1.95,4.55) -- (2.9,3.05) node [black,midway,xshift=12pt, yshift=5pt] 
{\scriptsize $\delta/4$};

\filldraw (x) circle (0.75pt) node[below left, xshift=1pt, yshift=1pt] {$x$}; 

\node at (1.25,5.25) {$X_+$}; 
\node at (2.5,0.7) {$\{ d_{\gamma} < \epsilon \}$}; 

\node at (-1, 3.9) {$B_r^X(x)$};

\end{tikzpicture}

\caption{If for a ball "of small radius" (here: $B_r^X(x)$ with radius $r$), we remove a distant piece "of large radius" (here: the thickening of $\{ d_{\gamma} < \epsilon \}$ by $\delta/4 \geq r$), there won't be any new holes cut into it.}
\label{Bild_Kleine_Kugel_grosse_Kugel}

\end{figure}
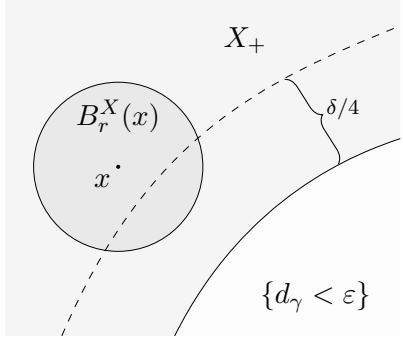
}

Let us first assume that $k=1$, i.e. $B := B_r^X(x_1)$, and write $C := \{ d_{\gamma} < \epsilon \}$. Using the presentations $(C)_t = \bigcup_{y \in C} B_t^X(y)$ and similarly $\overline{(C)_{t}} = \bigcup_{y \in \overline{C}} \overline{B_t^X(y)}$ (for arbitrary $t > 0$), we see that for every $y_0 \in \partial (C)_{t}$ there is a point $y_1 \in \partial C$ such that $y_0 \in \partial B_t^X(y_1)$. Moreover, this point $y_1$ is unique: we have $d(y_0, \overline{C}) = t$ and the existence of different $y_1, y'_1 \in \partial C$ with $y_0 \in \partial B_t^X(y_1)$, $y_0 \in \partial B_t^X(y'_1)$ -- i.e. $d(y_0, y_1) = t = d(y_0, y'_1)$ -- would contradict the uniqueness of the projection point of $y_0$ onto the closed, convex set $\overline{C} = \{ d_{\gamma} \leq \epsilon \}$.

As $x_1 \in X'_+$, we have $d(x_1,C) \geq d(x_1, X_-) \geq \delta/4$ and thus there exists a well-defined $t_0 \in [0,\delta/4)$ such that $\overline{B} \cap \overline{(C)_{t_0}}$ is nonempty for the first time; moreover, for this $t_0$, that intersection will consist of precisely one point $x_0$, because it would contradict convexity if there was also a point other than $x_0$. We will call $x_0$ the first contact point of $B$ with the thickenings of $C$.

In view of the monotonicity of $d(\cdot, C)$ along the flow lines (see Lemma \ref{Lem:Eigenschaften_Konstruktion_B'}) and the convexity of $B \cap (C)_{\delta/4}$, it simply remains to show that $(C)_{\delta/4}$ does not meet the upper side of $\partial B$ at a point which is not contained in the same connected component (which itself is homeomorphic to $D^{n-1}$) of $\partial B \cap (C)_{\delta/4}$ as $x_0$. By upper side of $\partial B$ we mean the points that -- when flowing away from the parabolic fixed point or the hyperbolic axis, respectively -- only arise as the second intersection point of $c_x$ with $\partial B$.

Assume to the contrary that there was another intersection of the thickenings of $C$ with the upper side of $B$. Then similar to the definition of $x_0$ we would have some $t' < \delta/4$ such that in that area, the intersection $\overline{B} \cap \overline{(C)_{t'}}$ would consist of precisely one point $x' \in \partial B$. By the above arguments, we find a unique $y' \in \partial C$ with $x' \in \partial B_{t'}^X(y')$; moreover, the boundary of that ball (and thus the boundary of $(C)_{t'}$) is tangent to $\partial B$ at that point $x'$, i.e. $T_{x'} \partial B = T_{x'} \partial B_{t'}^X(y') = T_{x'} \partial (C)_{t'}$ as subspaces of $T_{x'} X$. By the Gauss lemma, the radii of the balls $B$ and $B_{t'}^X(y')$ are orthogonal to their respective boundary in $x'$, hence their directions have to coincide. Thus the radius of $B_{t'}^X(y')$ for $x'$ (i.e. the geodesic piece $c_{y',x'}$ from $y'$ to $x'$) is a subset of the radius of $B$ for $x'$ (i.e. the geodesic piece $c_{x_1,x'}$ from $x_1$ to $x'$), or the other way round. In the first case, we would have
\[
y' = c_{y',x'}(0) \in c_{y',x'}([0,t')) \subseteq c_{x_1,x'}([0,r))
\]
and consequently $d(y', x_1) < r$; but because $y' \in \partial C$ and $r \leq \delta/4$, this would contradict $x_1 \notin (X_-)_{\delta/4}$. Similarly, the second case would lead to $x_1 \in c_{y',x'}([0,t'))$, which is a subset of $(X_-)_{t'} \subseteq (X_-)_{\delta/4}$ (recall $t' < \delta/4$), again contradicting the choice of $x_1$. Hence there can not be another intersection of the thickenings of $C$ with the upper side of $\partial B$ (other than in the same component as $x_0$), so we get the statement for a single ball $B = B_r^X(x_1)$.

We now turn to the case of an intersection $B := \bigcap_{i=1}^k B_r^X(x_i)$ of several balls. Again, $B \cap (C)_{\delta/4}$ is convex and the monotonicity statements for $d(\cdot, C)$ along the flow lines still hold. So we only have to show once again that there is no other intersection of the thickenings of $C$ with the upper side of $\partial B$ (other than in the component of the first contact point). As $\partial B$ consists of pieces of the boundaries of the balls $B_i$, such a second intersection point would again mean that for some $t' < \delta/4$, the thickening $(C)_{t'}$ would meet a boundary $\partial B_{i_0}$ tangentially at some point, for suitable $i_0 \in \{ 1, \ldots, k \}$. By the previous arguments for the case of a single ball (here: $B_{i_0}$), we get the desired contradiction.
\end{proof}

The following lemma is essential for our strategy. It basically says that if we take the cover given by balls and the stable covering sets as above (which, in general, is no longer good), then we can construct a good (stable) cover out of it by cutting the covering sets into smaller pieces; moreover, we can uniformly control the number of cuts needed and thus the number of sets in this refinement.

\begin{lem}\label{Lem:Verfeinerung_gute_Ueberdeckung}
There is a constant $\nu = \nu(n) \in \N$ only depending on $n$ with the following property. In the present situation, a set $B' = B_r^X(x) \setminus (X_-)_{\delta/4}$ can be replaced by at most $\nu$ open subsets $\{ U_{\omega} : \omega = 1, \ldots, \nu \} =: \mathcal{U}$, which form a good cover of $B'$. If the $B'$ are constructed from balls as above, then these sets $\mathcal{U}$ can be chosen compatible with each other, i.e. for a nonempty intersection of different $B'_1, \ldots, B'_k$, the intersections of the elements of $\mathcal{U}_1, \ldots, \mathcal{U}_k$ form a good cover of $\bigcap_{i=1}^k B'_i$. Here, we additionally assumed that the number of balls intersecting $B_r^X(x)$ nontrivially is bounded by a constant $\lambda = \lambda(n)$ only depending on $n$.
\end{lem}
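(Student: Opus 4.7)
The goal is to refine the stable covering set $B' = B_r^X(x) \setminus (X_-)_{\delta/4}$, which is a ball with at most $\kappa(n)$ convex pieces $C_j = (\{d_{\gamma_j} < \epsilon\})_{\delta/4}$ removed, into a uniformly bounded number of open sets forming a good cover, compatibly across intersections of several such $B'$. Lemma \ref{Lem:Durchschnitt_Kugeln_Ausschneidung_homoeomorph_Halbkugel} already handles the case of a single $C_j$: removing it from (any intersection of) our balls yields a half-ball, hence a contractible set. The only difficulty therefore lies in the interaction between several $C_j$'s inside $B_r^X(x)$, and the plan is to subdivide $B_r^X(x)$ into finitely many convex cells, each of which meets at most one $C_j$, and then to intersect these cells with $B'$.

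For the subdivision I would work with the convex distance functions $d_j(y) := d(y, \overline{C_j})$ and for each total order $\sigma$ on the indices of the $C_j$ meeting $B_r^X(x)$ consider the cell
\[
D_\sigma := \{\, y \in B_r^X(x) : d_{\sigma(1)}(y) \leq d_{\sigma(2)}(y) \leq \cdots \leq d_{\sigma(m)}(y)\, \}.
\]
Inside $D_\sigma$ only $C_{\sigma(1)}$ can be encountered, so after a small open thickening the set $U_\sigma := D_\sigma \cap B'$ is either $D_\sigma$ itself (if $C_{\sigma(1)}$ is not met) or a half-ball by Lemma \ref{Lem:Durchschnitt_Kugeln_Ausschneidung_homoeomorph_Halbkugel} applied in $D_\sigma$. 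Intersections of several $U_\sigma$ simply impose further convex inequalities on the same distance functions and are handled by the same lemma. The total number of cells is at most $m! \leq \kappa(n)!$, giving the claimed uniform bound $\nu(n)$.

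Compatibility across an intersection $B'_1 \cap \cdots \cap B'_k$ then follows because the pieces $C_j$ are globally defined (they are thickenings of the $\{d_\gamma < \epsilon\}$): if the cell schemes inside each $B_i$ are built from the same orderings of the same underlying list of $C_j$'s, the $U_\sigma$ from different balls intersect consistently, and each such intersection is a cell in $\bigcap_i B_i$ with at most one $C_j$ removed, again contractible by Lemma \ref{Lem:Durchschnitt_Kugeln_Ausschneidung_homoeomorph_Halbkugel}. The hypothesis that at most $\lambda(n)$ balls meet a fixed one keeps the total combinatorial count bounded in terms of $n$ only.

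The main obstacle I foresee is the convexity of the cells $D_\sigma$: each defining inequality $d_j \leq d_k$ is a difference of convex functions and hence in general not convex, so a priori the cells do not themselves satisfy the convexity hypothesis of Lemma \ref{Lem:Durchschnitt_Kugeln_Ausschneidung_homoeomorph_Halbkugel}. The remedy is to replace the Voronoi-style subdivision either by a slight convex enlargement (using the strict convexity of each $d_j$ along geodesics, which holds under $K<0$) or by a combinatorially rigid subdivision built from iterated bisecting convex hypersurfaces chosen canonically for each pair $(C_j, C_k)$. Carrying out this replacement while keeping the cell count bounded purely in terms of $n$, and verifying that all higher intersections remain contractible, is the technical heart of the argument.
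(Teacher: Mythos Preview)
Your approach is genuinely different from the paper's, and the obstacle you identify at the end is real and, as written, not resolved.

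The paper does not attempt any Voronoi-type decomposition. Instead it argues purely topologically: starting from a \emph{maximal} intersection $\bigcap_i B_i$, it fixes a homeomorphism to an $n$-cube subdivided into $3^n$ subcubes so that the first removed piece $(\{d_{\gamma_1}<\epsilon\})_{\delta/4}$ corresponds to a single boundary subcube (this is exactly what Lemma \ref{Lem:Durchschnitt_Kugeln_Ausschneidung_homoeomorph_Halbkugel} delivers). Then, by induction on $l$, each further $(\{d_{\gamma_{l+1}}<\epsilon\})_{\delta/4}$ is mapped into the existing cube grid, its image is ``straightened'' using the fibering by geodesics from its first contact point, and the grid is refined by a factor $(4l+1)^n$ so that the new piece again occupies whole subcubes. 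This yields at most $3^n(4\kappa+1)^{n\kappa}$ subcubes on a maximal intersection; the extension outward to single balls contributes the combinatorial factor $\lambda\cdot 2^{\lambda}$. No convexity of the pieces of the subdivision is ever used: the pieces are merely homeomorphic images of cubes, and their mutual intersections correspond to shared faces.

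Your scheme, by contrast, hinges on applying Lemma \ref{Lem:Durchschnitt_Kugeln_Ausschneidung_homoeomorph_Halbkugel} inside each cell $D_\sigma$. Two issues block this. First, as you say, the inequalities $d_j\le d_k$ do not cut out convex regions (already in $\R^2$ the region closer to a line than to a point is the exterior of a parabola), so the $D_\sigma$ are not convex and there is no reason for $D_\sigma\setminus C_{\sigma(1)}$, or intersections of several such sets, to be contractible. Second, even if the cells were convex, Lemma \ref{Lem:Durchschnitt_Kugeln_Ausschneidung_homoeomorph_Halbkugel} as stated and proved is specific to intersections of metric balls of the fixed radius $r$: its proof is the tangency argument comparing $r$ with $\delta/4$, which does not transfer to an arbitrary convex body. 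Your suggested remedies (convex enlargement, canonical bisecting hypersurfaces) are reasonable directions but are not carried out, and in a general Hadamard manifold with only $-1\le K<0$ there are no totally geodesic hypersurfaces available to play the role of canonical bisectors. So the proposal is an outline with its technical heart missing, whereas the paper's cube-grid induction sidesteps the convexity problem entirely at the price of a worse but still purely dimensional constant.
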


\begin{proof}
We start the construction on the maximal intersections of the covering balls. More precisely: let $B'_1, \ldots, B'_k$ be sets as above, obtained from balls $B_r^X(x_i)$ via $B'_i = B_r^X(x_i) \setminus (X_-)_{\delta/4}$; if $\bigcap_{i=1}^k B_i \neq \emptyset$ (where $B_i := B_r^X(x_i)$), but $B_0 \cap \bigcap_{i=1}^k B_i = \emptyset$ for all other balls $B_0$ (from our finite family of balls), then the intersection $\bigcap_{i=1}^k B_i$ is maximal.

By Lemma \ref{Lem:Eigenschaften_Konstruktion_B'} we have
\[
\bigcap\limits_{i=1}^k B_i \setminus (X_-)_{\delta/4} = \bigcap\limits_{i=1}^k B_i \setminus \bigcup\limits_{j=1}^l (\{ d_{\gamma_j} < \epsilon \})_{\delta/4}
\]
for suitable $\gamma_1, \ldots, \gamma_l \in \Gamma \setminus \{ \id \}$ and $l \leq \kappa = \kappa(n)$. We will construct the refinement by induction on $l$.

For $l=1$, we use Lemma \ref{Lem:Durchschnitt_Kugeln_Ausschneidung_homoeomorph_Halbkugel} and choose a homeomorphism between $\bigcap_{i=1}^k B_i \cong D^n$ and an $n$-dimensional open cube $W$, where we think of $W$ as partitioned into $3^n$ subcubes of equal size; more precisely, the partition $(0,1) = (0,1/3] \cup [1/3, 2/3] \cup [2/3, 1)$ (which is not disjoint) will be used to get a partition of $(0,1)^n = W$ by taking products. Since by Lemma \ref{Lem:Durchschnitt_Kugeln_Ausschneidung_homoeomorph_Halbkugel}, $\bigcap_{i=1}^k B_i \cap (\{ d_{\gamma_1} < \epsilon \})_{\delta/4}$ sits inside $\bigcap_{i=1}^k B_i$ as half a ball, we can choose this homeomorphism in such a way that $\bigcap_{i=1}^k B_i \cap (\{ d_{\gamma_1} < \epsilon \})_{\delta/4}$ corresponds to a sufficiently small open neighborhood of one of the subcubes of the partition above; obviously, this subcube has to lie at the boundary of the cube. By passing to sufficiently small open neighborhoods of the remaining subcubes (after removing the open neighborhood corresponding to $(\{ d_{\gamma_1} < \epsilon \})_{\delta/4}$), we get a good cover of $\bigcap_{i=1}^k B_i \setminus (\{ d_{\gamma_1} < \epsilon \})_{\delta/4}$. Note that intersections of the covering sets correspond to (open neighborhoods of the) joint faces of the subcubes intersecting each other. The cube model we just constructed will also be called cube grid; Figure \ref{Bild_Wuerfelraster_Anfang} illustrates the situation.

{
\begin{figure}
\centering

\begin{tikzpicture}[scale=0.75]


\begin{scope}[xshift=-5cm]



\fill[color=gray!2, xshift=0cm, yshift=0cm] (0,0) .. controls (1.25,2.5) and (3.25, 3.25) .. (4,3.5) -- (4, 0) -- cycle; 



\fill[color=gray!8, xshift=0cm, yshift=0cm] (0,0) .. controls (1.25,2.5) and (3.25, 3.25) .. (4,3.5) -- (4, 6) -- (-3, 6) -- (-3, 0) -- cycle; 


\coordinate (x) at (-1,2.8); 
\coordinate (y) at (-0.2,3.5); 


\draw[ultra thin, xshift=0cm, yshift=0cm] (0,0) .. controls (1.25,2.5) and (3.25, 3.25) .. (4,3.5); 

\draw[ultra thin, fill=gray, fill opacity=0.1] (x) circle (1.5cm); 

\draw[ultra thin, fill=gray, fill opacity=0.1] (y) circle (1.5cm); 


\draw[very thin, dashed, xshift=0cm, yshift=0cm] (-2,0) .. controls (-0.5,3.5) and (1.5, 4.5) .. (4,5.5); 

\draw [decorate,decoration={brace,amplitude=5pt},xshift=0pt,yshift=0pt]
(1.95,4.55) -- (2.9,3.05) node [black,midway,xshift=12pt, yshift=5pt] 
{\scriptsize $\delta/4$};

{
\draw[thin, fill=gray, fill opacity=0.15] (-0.82,2.131) arc(245.6:290:1.5cm) -- (0.325,2.091) arc(-27.9:26.4:1.5cm) -- (0.338,3.46) .. controls (-0.2,2.94) .. (-0.82,2.131);
}


\node at (1.25,5.25) {$X_+$}; 
\node at (2.5,0.7) {$\{ d_{\gamma} < \epsilon \}$}; 


\node at (-0.8, 3.6) {\footnotesize $B_1 \cap B_2$};
\node at (-2, 2.5) {\footnotesize $B_1$};
\node at (0, 4.5) {\footnotesize $B_2$};

\draw[very thin] (0,2.4) -- (0,1.2);
\node at (-0.1,1) {\tiny $(B_1 \cap B_2)$};
\node at (0.1,0.7) {\tiny $\cap (\{ d_{\gamma} < \epsilon \})_{\delta/4}$};

\end{scope}


\begin{scope}[xshift=2cm, yshift=0.75cm]

\draw[fill=gray!3] (0,0) -- (4.5,0) -- (4.5,4.5) -- (0,4.5) -- cycle;

\draw[very thin] (1.5,0) -- (1.5,4.5);

\draw[very thin] (3,0) -- (3,4.5);

\draw[very thin] (0,1.5) -- (4.5,1.5);

\draw[very thin] (0,3) -- (4.5,3);

\draw[fill=gray!20] (1.5,0) -- (3,0) -- (3,1.5) -- (1.5,1.5) -- cycle;


\end{scope}

\end{tikzpicture}

\caption{
The highlighted area in the cube to the right corresponds to the set $(B_1 \cap B_2) \cap (\{ d_{\gamma} < \epsilon \} )_{\delta/4}$ in the left picture. Small open neighborhoods of the other subcubes yield a good cover of the rest of the cube, which corresponds to $(B_1 \cap B_2) \setminus ( \{ d_{\gamma} < \epsilon \} )_{\delta/4}$.}
\label{Bild_Wuerfelraster_Anfang}

\end{figure}
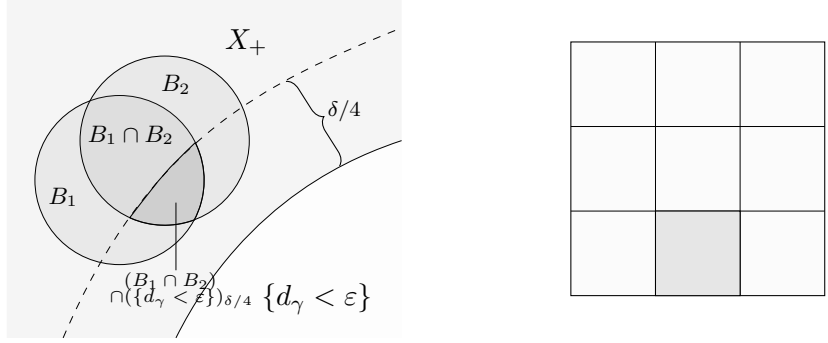
}

Consequently, for the step $l \rightarrow l+1$ we can assume that $\bigcap_{i=1}^k B_i$ is homeomorphic to a decomposition of the cube $W$ into finitely many subcubes (as above), where the subspace $\bigcap_{i=1}^k B_i \setminus \bigcup_{j=1}^l (\{ d_{\gamma_j} < \epsilon \})_{\delta/4}$ corresponds some of these subcubes (how many subcubes we need will be shown later on). Now, we have to find a cube grid for the situation that we further remove the set $(\{ d_{\gamma_{l+1}} < \epsilon \})_{\delta/4}$. To this end we map $(\{ d_{\gamma_{l+1}} < \epsilon \})_{\delta/4}$ into the existing cube grid under the present homeomorphism; next, we have to deform this homeomorphic image $C_{\delta/4}$ of $(\{ d_{\gamma_{l+1}} < \epsilon \})_{\delta/4}$ in $W$ in such a way that -- after a suitable refinement of the subcubes -- this is compatible with the existing cube grid, i.e. that $C_{\delta/4}$ can again be thought of as a union of (open neighborhoods of) subcubes.

Similar to the proof of Lemma \ref{Lem:Durchschnitt_Kugeln_Ausschneidung_homoeomorph_Halbkugel} let $x$ be the first contact point of the thickenings of $\{ d_{\gamma_{l+1}} < \epsilon \}$ with $\bigcap_{i=1}^k \overline{B_i}$; under the present homeomorphism, $x$ corresponds to a point $x'$ on the boundary $\partial W$ of the cube $W \subseteq \R^n$. Now $\partial W$ is made up of boundary pieces of the subcubes, hence $x'$ lies in the boundary of (at least) one subcube. Note that by convexity of $(\{ d_{\gamma_{l+1}} < \epsilon \})_{\delta/4}$, this set admits a (unique) fibering via geodesics from $x$ to its points; via the homeomorphism, we thus get a fibering of $W$ by curves from $x'$ to the points of $C_{\delta/4}$. By the definition of $x$ as the first contact point, we can think of this successive thickening of $\{ d_{\gamma_{l+1}} < \epsilon \}$ -- i.e. taking the sets $(\{ d_{\gamma_{l+1}} < \epsilon \})_t$ for increasing $0 < t \leq \delta/4$ -- as a flow away from $x$ along the fibering geodesics; similar observations hold for the situation in $W$ with a flow along the above curves away from $x'$. We will denote the image of $(\{ d_{\gamma_{l+1}} < \epsilon \})_t$ in $W$ by $C_t$ and call the above curves fibering curves. Note that by convexity of $(\{ d_{\gamma_i} < \epsilon \})_{\delta/4}$ ($i=1,\ldots, l+1$), the set $(\{ d_{\gamma_{l+1}} < \epsilon \})_{\delta/4} \cap \bigcup_{i=1}^l (\{ d_{\gamma_i} < \epsilon \})_{\delta/4}$ consists of at most $l$ connected components, so this also translates to the situation in $W$. Looking from $x'$, these $l$ components correspond to $l$ families of fibering curves, where two curves are in the same family if they meet the same connected component, see Figure \ref{Bild_Wuerfelraster_Hinzufuegen_und_Faserung_neuer_Menge}.

{
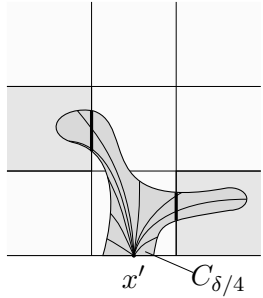
\begin{figure}
\centering

\begin{tikzpicture}[scale=0.75]


\fill[color=gray!20] (0,1.5) -- (1.5,1.5) -- (1.5,3) -- (0,3) -- cycle;

\fill[color=gray!20] (3,0) -- (4.5,0) -- (4.5,1.5) -- (3,1.5) -- cycle;

\fill[color=gray!3] (0,0) -- (3,0) -- (3,1.5) -- (4.5,1.5) -- (4.5,4.5) -- (0,4.5) -- (0,3) -- (1.5,3) -- (1.5,1.5) -- (0,1.5) -- cycle;


\draw[very thin] (1.5,0) -- (1.5,4.5);

\draw[very thin] (3,0) -- (3,4.5);

\draw[very thin] (0,1.5) -- (4.5,1.5);

\draw[very thin] (0,3) -- (4.5,3);

\draw (3,0) -- (3,1.5) -- (4.5,1.5);

\draw (0,1.5) -- (1.5,1.5) -- (1.5,3) -- (0,3);


\draw[fill=gray!30] (1.7,0) .. controls (1.9,0.7) and (2,1.8) .. (1.2,2) arc (250:70:0.5cm and 0.3cm) .. controls (2.3,2.3) and (1.5,0.75) .. (3.9,1.2) arc (100:-90:0.3cm and 0.2cm) .. controls (2.7,0.6) .. (2.6,0) -- cycle;


\draw[very thick] (1.5,1.85) -- (1.5,2.57);

\draw[very thick] (3,0.62) -- (3,1.12);


\draw[ultra thin] (2.25,0) .. controls (1.95,0.2) .. (1.77,0.3);

\draw[ultra thin] (2.25,0) .. controls (2,0.55) .. (1.835,0.75);

\draw[ultra thin] (2.25,0) .. controls (2,0.95) .. (1.78,1.4);

\draw[ultra thin] (2.25,0) .. controls (2.05,1.45) and (1.65,2) .. (1.1,2.03);

\draw[ultra thin] (2.25,0) .. controls (2.2,1.55) and (1.5,2.25) .. (1.25,2.57);

\draw[ultra thin] (2.25,0) .. controls (2.35,0.8) .. (2.35,1.32);

\draw[ultra thin] (2.25,0) .. controls (2.35,0.6) and (2.8,0.95) .. (3.1,1.12);

\draw[ultra thin] (2.25,0) .. controls (2.4,0.6) and (3.1,1) .. (4.25,1);

\draw[ultra thin] (2.25,0) .. controls (2.4,0.2) .. (2.67,0.33);

\draw (0,0) -- (4.5,0);


\filldraw (2.25,0) circle (0.75pt) node[below] {$x'$};

\draw[very thin] (2.45,0.07) -- (3.3,-0.3);
\node at (3.75,-0.45) {$C_{\delta/4}$};


\end{tikzpicture}

\caption{
Inserting $C_{\delta/4}$ into an existing cube grid. The "fibering" of $C_{\delta/4}$ is shown via the small curves starting in $x'$. Curves which intersect one of the bold lines -- i.e. intersect the cubes to be removed -- belong to the same family.}
\label{Bild_Wuerfelraster_Hinzufuegen_und_Faserung_neuer_Menge}

\end{figure}
}

If such a family of fibering curves does not end inside the part of the cube grid to be removed, then we can push these curves back to the last position where they left the part to be removed; and if they don't meet this part at all, we can push them back to $x'$. This can be realized by a suitable homeomorphism of the cube grid. Hence up to homeomorphism, we can assume that all fibering curves which intersect the part to be removed also end in the interior of it; and if they don't meet the part to be removed, we can assume they remain inside a sufficiently small neighborhood of $x'$.

Recall that the fibering curves of $C_{\delta/4}$ are the homeomorphic image of geodesics in $X$; we will use this to straighten the parts of $C_{\delta/4}$ which meet the subcubes to be removed. Take a family of fibering curves as above, ending inside a common component of the part to be removed. On their way from $x'$ to that component they might enter and leave the part to be removed several times. We will now think of these parts of curves between those cubes to be removed as separate pieces, i.e. such a piece will end in the part to be removed and either start in $x'$ or in the part to be removed. Now observe that every such piece can be deformed homeomorphically such that a subcube of the grid which should \textit{not} be removed will not be entered again after leaving it for the first time; this can be seen as a straightening of the fibering curves.

Using the two steps above\footnote{I.e. pushing back the ends of the fibering curves of $C_{\delta/4}$ not ending in the part to be removed to $x'$ or the position where they last left a subcube to be removed; and straightening the fibering curves between the positions where they leave and enter the part to be removed.} we can ensure that for a subcube which should not be removed, the intersection of its boundary with $C_{\delta/4}$ will have at most $2l$ components: for each of the $l$ directions from $x'$ to the intersections of $C_{\delta/4}$ not more than one intersection for entering and leaving, each. In order to describe the homeomorphism type of $C_{\delta/4}$ inside such a subcube, for each dimension we will need at most $2l$ new pieces of the respective interval to account for the $2l$ directions, at which $C_{\delta/4}$ might leave the subcube, and additionally at most $2l+1$ pieces to account for space in between those directions. Hence every interval (whose product yields the cube $W$) has to be split into at most $4l+1$ parts; thus in dimension $n$, we will split each of the subcubes into not more than $(4l+1)^n \leq (4\kappa + 1)^n$ new subcubes. As this procedure might be necessary for every of the subcubes, we conclude that in the step $l \rightarrow l+1$, the number of subcubes increases at most by factor $(4l+1)^n \leq (4\kappa + 1)^n$. Recall that in step $l=1$, we had $3^n$ subcubes; since there will be at most $\kappa$ steps, we have to repeat the procedure $\leq \kappa - 1$ times. Hence the number of subcubes after all possible steps will be bounded by
\[
3^n \cdot \prod\limits_{l = 2}^{\kappa} (4l + 1)^n \leq 3^n \cdot \prod\limits_{l = 2}^{\kappa} (4\kappa + 1)^n \leq 3^n \cdot (4\kappa + 1)^{n\cdot\kappa} =: \nu_0 = \nu_0(n) \in \N.
\]
As mentioned above, we get a good cover of the remaining cube by taking sufficiently small open neighborhoods of the subcubes not to be removed; by using the final homeomorphism, this yields a good cover for the set in $X$.

We will now extend the cube grid constructed for a maximal intersection $\bigcap_{i=1}^k B_i$ as above successively to the entire ball $B_1 = B_r^X(x)$. Note that $\bigcap_{i=1}^k B_i$ is a convex subset of the next larger intersection $\bigcap_{i=1}^{k-1} B_i$; if we apply the above procedure of cutting into subcubes to $\bigcap_{i=1}^{k-1} B_i$, then we will keep the cube grid which we already obtained on $\bigcap_{i=1}^k B_i \subseteq \bigcap_{i=1}^{k-1} B_i$ fixed and will refine (i.e. cut into subcubes) only the other parts. We proceed similarly for the other sets obtained by leaving out one or several of the $B_i$ when taking the intersection. In this fashion, we will get a good cover of $B_1$ which will be compatible with the good covers of the other balls $B_i$, as they coincide on their common intersection (see Figure \ref{Bild_Wuerfelraster_Erweitern}).

{
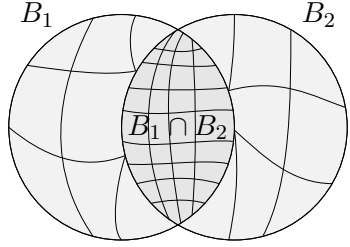
\begin{figure}
\centering

\begin{tikzpicture}[scale=0.75]


\draw[very thin, fill=gray, fill opacity=0.1] (-1,0) circle (2cm);

\draw[very thin, fill=gray, fill opacity=0.1] (1,0) circle (2cm);


\draw[very thin] (-1,0) circle (2cm);

\draw[very thin] (1,0) circle (2cm);


\draw[very thin] (-1.5,1.94) .. controls (-2,1) and (-2.5,-0.5) .. (-1.5,-1.94);

\draw[very thin] (-0.75,1.98) .. controls (-0.9,1.7) and (-0.85,1.25) .. (-0.75,1);

\draw[very thin] (-0.75,1) .. controls (-1.25,0.9) and (-2,1) .. (-2.68,1.1);

\draw[very thin] (-0.93,-0.5) .. controls (-1.1,-1) and (-1.2,-1.8) .. (-1,-2);

\draw[very thin] (-0.93,-0.5) .. controls (-1.7,-0.8) and (-2.4,-0.5) .. (-3,0);

\draw[very thin] (2,1.72) .. controls (2,0.5) and (2,-1) .. (1.5,-1.94);

\draw[very thin] (1,2) .. controls (1,1.75) and (1,1.25) .. (0.9,0.65);

\draw[very thin] (0.9,0.65) .. controls (1.8,0.9) and (2.4,0.55) .. (2.97,0.35);

\draw[very thin] (1,-0.05) .. controls (1.1,-0.9) and (1,-1.5) .. (0.9,-2);

\draw[very thin] (1,-0.05) .. controls (1.5,-0.5) and (2,-1) .. (2.74,-1);

\draw[ultra thin] (-0.43,1.4) .. controls (-0.1,1.35) and (0.1,1.3) .. (0.43,1.4);

\draw[ultra thin] (-0.63,1.15) .. controls (-0.2,1.1) and (0.4,1.05) .. (0.68,1.1);

\draw[ultra thin] (-0.87,0.7) .. controls (-0.4,0.65) and (0.4,0.65) .. (0.87,0.7);

\draw[ultra thin] (-0.98,0.3) .. controls (-0.25,0.15) and (0.4,0.35) .. (0.98,0.3);

\draw[ultra thin] (-0.98,-0.25) .. controls (-0.5,-0.4) and (0.4,-0.2) .. (0.98,-0.25);

\draw[ultra thin] (-0.9,-0.65) .. controls (-0.45,-0.7) and (0.5,-0.55) .. (0.9,-0.65);

\draw[ultra thin] (-0.73,-1) .. controls (-0.3,-0.9) and (0.2,-0.85) .. (0.73,-1);

\draw[ultra thin] (-0.48,-1.35) .. controls (-0.25,-1.3) and (0.15,-1.25) .. (0.48,-1.35);


\draw[ultra thin] (-0.15,-1.65) .. controls (-0.8,-0.2) and (-0.5,1.2) .. (-0.05,1.7);

\draw[ultra thin] (0.05,-1.7) .. controls (-0.2,-0.3) and (-0.3,1.1) .. (0.1,1.68);

\draw[ultra thin] (0.25,-1.55) .. controls (0.35,-0.4) and (0.3,1) .. (0.25,1.55);


\node at (-2.5, 2) {$B_1$};
\node at (0, 0) {$B_1 \cap B_2$};
\node at (2.5, 2) {$B_2$};

\end{tikzpicture}

\caption{
Extending the cube grid of $B_1 \cap B_2$ to a cube grid on $B_1$ and $B_2$.}
\label{Bild_Wuerfelraster_Erweitern}

\end{figure}
}

It remains to determine how many new sets will be needed to cover the initial ball $B_1$. Using that by assumption, there are at most $\lambda = \lambda(n)$ other balls intersecting $B_1$, we see that there are at most
\[
\sum\limits_{i=1}^{\lambda} 2^i \leq \sum\limits_{i=1}^{\lambda} 2^{\lambda} = \lambda \cdot 2^{\lambda}
\]
different intersections of different size for these balls. Thus we will need a refinement into at most $\lambda \cdot 2^{\lambda} \cdot \nu_0 =: \nu$ subsets. As $\lambda$ and $\nu_0$ depend only on $n$, the same is true for $\nu$.
\end{proof}

Lemma \ref{Lem:Verfeinerung_gute_Ueberdeckung} above is based on the assumption that the number of intersections between the balls can be controlled by $\lambda = \lambda(n)$. Later on, we will see that such a bound follows easily from the fact that the centers of the balls have a uniform lower bound on their distance from one another.

\subsection{Nerve construction}

We will now explain how to cover the (shrunken) thick part $M'_+$ by a good open cover which is stable under the flow, using the constructions from the previous sections. Let $\mathfrak{M}$ be a maximal $(r/2)$-discrete subset of $M'_+$. As before, for a ball $B_r^M(p)$ (where $p \in \mathfrak{M}$) with $B_r^M(p) \setminus M'_+ \neq \emptyset$, denote the corresponding stable covering set -- obtained by cutting off the part outside $M'_+$ and extending the remainder along the flow lines -- by $B'$. Let $N_+$ be the union of all these sets, so $N_+$ consists of the balls $B_r^M(p)$ lying completely inside $M'_+$ and the changed covering sets $B'$, whose initial balls had parts outside of $M'_+$. Moreover, let $N_0 := N_+ \cap \overline{(M_-)_{\delta/4}}$. By construction,
\[
M'_+ \subseteq N_+ \subseteq M_+ \qquad\text{and}\qquad \partial M'_+ \subseteq N_0.
\]
The stability of the covering is crucial for the following Lemma.

\begin{lem}\label{Lem:Sauer3.14Analogon}
The inclusions $j: M'_+ \hookrightarrow N_+$ and $\partial j: \partial M'_+ \hookrightarrow N_0$ are homotopy equivalences.
\end{lem}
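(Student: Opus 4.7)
The plan is to use the flow $f$ from Lemma \ref{Lem:Sauer3.13Analogon} to induce strong deformation retractions of $N_+$ onto $M'_+$ and of $N_0$ onto $\partial M'_+$. The crucial ingredient is the forward-invariance of each stable covering set $B' = (B_r^M(p) \cap M'_+) \cup A_S$ under $f$, which was built into the construction of $A_S$ as the flow-extension of $A = B_r^M(p) \cap \partial M'_+$ inside $\pi(W')$.

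Concretely, fix $y \in B'$. If $y \in M'_+$, then $f(y,t) = y$ for all $t$, since $f$ is a strong deformation retraction onto $M'_+$. Otherwise $y \in A_S$, so $y = \pi(c_x(s))$ for some $x \in A'$ and $s \in (T_x/2, T_x)$; by the definition of $F_W$ and $\Gamma$-equivariance, $f(y,t) = \pi(c_x((1-t)s + tT_x))$. Since $(1-t)s + tT_x$ stays in $(T_x/2, T_x)$ for $t \in [0,1)$ and equals $T_x$ at $t=1$, we see $f(y,t) \in A_S$ for $t<1$ and $f(y,1) = \pi(c_x(T_x)) \in A \subseteq B' \cap \partial M'_+$. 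For balls $B_r^M(p) \subseteq M'_+$, which are not modified, $f$ is the identity. Taking the union over all covering sets, $f$ restricts to a map $N_+ \times [0,1] \to N_+$ with $f(\cdot,1)$ landing in $N_+ \cap M'_+ = M'_+$, giving the desired strong deformation retraction of $N_+$ onto $M'_+$. Hence $j$ is a homotopy equivalence.

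For $\partial j$ the same flow works. Points of $\partial M'_+$ are fixed by $f$, since they lie in $M'_+$ (equivalently, $\partial M'_+ \cap W' = \emptyset$ for every $W' \in \mathcal{W}$). Any $y \in N_0 \setminus \partial M'_+$ lies in some $A_S$, as $\bigcup A_S$ is the only portion of $N_+$ that meets $(M_-)_{\delta/4}$. The computation above then shows $f(y,t) \in A_S \cup A \subseteq N_0$ for all $t$, and $f(y,1) \in A \subseteq \partial M'_+$. Hence $f$ restricts to a strong deformation retraction of $N_0$ onto $\partial M'_+$, so $\partial j$ is also a homotopy equivalence. The main substantive point is simply the verification of the invariance conditions $f(N_+ \times [0,1]) \subseteq N_+$ and $f(N_0 \times [0,1]) \subseteq N_0$; everything else unwinds tautologically from the strong deformation retraction properties of $f$, so the real work was done in advance when $A_S$ was designed precisely so that the flow pushes it through $A$ into $\partial M'_+$ without leaving the covering set.
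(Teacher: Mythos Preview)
Your proof is correct and follows the same approach as the paper: both use the flow $f$ from Lemma~\ref{Lem:Sauer3.13Analogon}, define the homotopy inverse $r := f(\cdot,1)|_{N_+}$, observe $r\circ j=\id_{M'_+}$ since $f$ fixes $M'_+$ pointwise, and use forward-invariance of the covering sets under $f$ to obtain $j\circ r\simeq \id_{N_+}$; the boundary case is handled by noting that $f$ preserves $\overline{(M_-)_{\delta/4}}$. The only difference is that the paper states this in two sentences (referring to \cite{Sauer} Lemma~3.14), whereas you spell out the verification of the invariance $f(B'\times[0,1])\subseteq B'$ and $f(N_0\times[0,1])\subseteq N_0$ explicitly via the parametrization $y=\pi(c_x(s))$ of points in $A_S$.
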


\begin{proof}
Again, the proof is analogous to \cite{Sauer} Lemma 3.14: for the map
\[
r: N_+ \hookrightarrow M_+ \stackrel{f(\cdot,1)}{\rightarrow} M'_+
\]
we have $r \circ j = \id_{M'_+}$ and using the stability of the covering sets under the flow $f$, we also see that $j \circ r$ is homotopic to $\id_{N_+}$.

The argument can be repeated for $\partial j$, as $f(\cdot,t)$ can be restricted to $\overline{(M_-)_{\delta/4}}$ (for all $t\in [0,1]$).
\end{proof}

By Lemma \ref{Lem:Verfeinerung_gute_Ueberdeckung}, we possibly have to refine the remaining parts of shape $B_r^X(x) \setminus (X_-)_{\delta/4}$ of the cut off balls into at most $\nu$ subsets to obtain a good cover. We do this now and similarly for their corresponding balls $B_r^M(p)$ in $M$. Again, just as for the initial balls, we can extend the resulting subsets which meet the boundary of $M'_+$ along the flow lines to obtain a good cover $\mathcal{U}$ of all of $N_+$.

For $U \in \mathcal{U}$ with $U \cap \partial M'_+ \neq \emptyset$ define $U' = U \cap \overline{(M_-)_{\delta/4}}$ and, finally, $\mathcal{V}$ as the set containing all these $U'$. By construction of the refinement, $\mathcal{V}$ is a good cover of $N_0$. Let $N(\mathcal{U})$ and $N(\mathcal{V})$ denote the corresponding nerve complexes (see e.g. \cite{Sauer} section 2.2 for a recapitulation). Since every $U' \in \mathcal{V}$ corresponds to precisely one $U \in \mathcal{U}$ and the same holds true for the intersections of different $U'$ with respect to intersections of the corresponding $U$, we can consider $N(\mathcal{V})$ as a subcomplex of $N(\mathcal{U})$. We obtain the following crucial lemma.

\begin{lem}\label{Lem:Homotopieaequivalenz_zu_Simplizialkomplex}
The pair $(M_+, \partial M_+)$ is (as a pair) homotopy equivalent to the simplicial pair $(N(\mathcal{U}), N(\mathcal{V}))$.
\end{lem}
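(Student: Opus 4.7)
The plan is to chain together three pair homotopy equivalences:
\[
(M_+, \partial M_+) \;\simeq\; (M'_+, \partial M'_+) \;\simeq\; (N_+, N_0) \;\simeq\; (N(\mathcal{U}), N(\mathcal{V})).
\]

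For the first step, I would invoke Lemma \ref{Lem:Sauer3.13Analogon}: the flow $f$ is a strong deformation retraction of $M_+$ onto $M'_+$, and $f(\cdot,1)$ restricts to a homeomorphism $\partial M_+ \to \partial M'_+$. In particular, $f(\cdot,1)$ is a map of pairs $(M_+,\partial M_+)\to (M'_+,\partial M'_+)$; since both constituent maps are homotopy equivalences and the inclusion $M'_+\hookrightarrow M_+$ is a pair homotopy inverse (the homotopy $f(\cdot,t)$ keeps $\partial M_+$ inside $M_+$ throughout), this is an honest pair homotopy equivalence.

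For the second step, Lemma \ref{Lem:Sauer3.14Analogon} provides that the inclusions $j\colon M'_+\hookrightarrow N_+$ and $\partial j\colon \partial M'_+\hookrightarrow N_0$ are each homotopy equivalences. Since $(M'_+,\partial M'_+)$ and $(N_+,N_0)$ are CW-pairs (or cofibrant pairs, both pairs being neighborhood deformation retracts), the standard gluing-lemma / five-lemma argument promotes the pair of homotopy equivalences $(j,\partial j)$ to a pair homotopy equivalence.

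The third step is the main point: I apply the pair version of the nerve lemma. By construction via Lemma \ref{Lem:Verfeinerung_gute_Ueberdeckung}, $\mathcal{U}$ is a good open cover of $N_+$, and $\mathcal{V}$ is a good open cover of $N_0$ consisting precisely of the intersections $U\cap \overline{(M_-)_{\delta/4}}$ for those $U\in\mathcal{U}$ meeting $\partial M'_+$; by the compatibility of intersections noted in the text, the inclusion of nerves $N(\mathcal{V})\hookrightarrow N(\mathcal{U})$ is a subcomplex inclusion. Concretely, I would choose a partition of unity $\{\varphi_U\}_{U\in\mathcal{U}}$ subordinate to $\mathcal{U}$ and use it to define the standard map $\Phi\colon N_+\to |N(\mathcal{U})|$; picking the partition so that $\varphi_U$ vanishes outside $\overline{(M_-)_{\delta/4}}$ whenever the corresponding $U$ does not meet $\partial M'_+$, one ensures $\Phi(N_0)\subseteq |N(\mathcal{V})|$. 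Both $\Phi$ and its restriction $\Phi|_{N_0}\colon N_0\to |N(\mathcal{V})|$ are then homotopy equivalences by the classical nerve lemma for good open covers, and a final application of the five lemma (or the cofibration-based pair Whitehead theorem) turns this into a pair homotopy equivalence $(N_+,N_0)\simeq (N(\mathcal{U}),N(\mathcal{V}))$.

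The main obstacle I expect is bookkeeping for the pair version of the nerve lemma: ensuring that $\mathcal{V}$ really is a good \emph{sub}cover in the sense that no new simplices appear in $N(\mathcal{U})$ from sets in $\mathcal{V}$ that would destroy the inclusion of nerves, and that the partition of unity can be chosen compatibly so that $N_0$ is mapped into $|N(\mathcal{V})|$. Both points are secured by the careful construction of $\mathcal{U}$ and $\mathcal{V}$ in the preceding subsections (in particular the stability of the covering sets under the flow, which guarantees that a set $U\in\mathcal{U}$ meeting $N_0$ is exactly one whose refinement contributes a member of $\mathcal{V}$), so once these are spelled out, the pair nerve lemma can be applied exactly as in \cite{Sauer}.
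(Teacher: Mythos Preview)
Your proposal is correct and follows essentially the same route as the paper, which simply defers to \cite{Sauer} (the argument between their Lemma 3.14 and Lemma 3.15) after substituting Lemmas \ref{Lem:Sauer3.13Analogon} and \ref{Lem:Sauer3.14Analogon}; your three-step chain and the pair nerve lemma are exactly what that deferred argument amounts to. One small caveat: in your first step the inclusion $M'_+\hookrightarrow M_+$ is not a map of pairs since $\partial M'_+\not\subseteq\partial M_+$, so the pair equivalence there should be obtained via the same cofibration/five-lemma device you already invoke in the later steps, applied to the map $f(\cdot,1)$.
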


\begin{proof}
Using Lemma \ref{Lem:Sauer3.13Analogon} and \ref{Lem:Sauer3.14Analogon} as substitutes for the analogous statements of \cite{Sauer}, the proof given in \cite{Sauer} (between Lemma 3.14 and Lemma 3.15) can be repeated in our situation. Note that this proof only uses homotopy theoretic arguments, so our different curvature conditions won't matter.
\end{proof}

In order to obtain our main result, it remains to show that the complexity of $N(\mathcal{U})$ can be bounded. To this end, we introduce the following notation from \cite{Sauer}: a simplicial complex $S$ is a \textbf{$(D,C)$-simplicial complex} (where $D, C > 0$), if the number of vertices of $S$ is bounded by $C$ and the degree at each vertex is at most $D$. Similarly, a \textbf{$(D,C)$-simplicial pair} is a simplicial pair $(S,S')$, where $S$ is a $(D,C)$-simplicial complex (thus $S'$ satisfies the same bounds).

Define constants $C = C(n), D = D(n) > 0$ via
\begin{align*}
C &:= \frac{\nu}{\Vol_{\R^n}(B_{r/4}^{\R^n})},\\
D &:= \nu \cdot N(n, r/2, 2r),
\end{align*}
where $N(n, r/2, 2r)$ is as in \cite{Sauer} Lemma 3.2 and $\nu$ is taken from Lemma \ref{Lem:Verfeinerung_gute_Ueberdeckung}; note that these values depend only on $n$, as $r = \epsilon(n)/32$ and $\nu = \nu(n)$ only depended on $n$. The complexity of $N(\mathcal{U})$ is bounded as follows:

\begin{lem}\label{Lem:Sauer3.15Analogon}
$N(\mathcal{U})$ is a $(D, C \cdot \Vol(M))$-simplicial complex.
\end{lem}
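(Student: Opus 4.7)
The plan is to prove two separate bounds corresponding to the definition of a $(D, C\cdot\Vol(M))$-simplicial complex.

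For the vertex count, note that $\mathcal{U}$ was obtained from $\{B'_p : p \in \mathfrak{M}\}$ by replacing each $B'_p$ by at most $\nu$ open pieces (Lemma \ref{Lem:Verfeinerung_gute_Ueberdeckung}), so $|\mathcal{U}| \leq \nu \cdot |\mathfrak{M}|$. I would bound $|\mathfrak{M}|$ by the volume-packing argument already used in Lemma \ref{Lem:LineareSchrankeAnzahlKomponenten} and Lemma \ref{Lem:DickerTeilKompakt}: the $(r/2)$-discreteness of $\mathfrak{M}$ makes the balls $B_{r/4}^M(p)$ pairwise disjoint, and they embed in $M$ since the injectivity radius at $p \in M'_+$ is at least $\epsilon/2 > r/4$. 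By volume comparison (valid for $K \leq 0$) each such ball has volume at least $\Vol_{\R^n}(B_{r/4}^{\R^n})$, giving $|\mathfrak{M}| \leq \Vol(M)/\Vol_{\R^n}(B_{r/4}^{\R^n})$ and hence $|\mathcal{U}| \leq C\cdot\Vol(M)$.

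For the degree bound, the crux is the claim: if $B'_p \cap B'_{p'} \neq \emptyset$ for distinct $p, p' \in \mathfrak{M}$, then $d(p,p') < 2r$. I would establish this by invoking the flow $f(\cdot,1)\colon M_+ \to M'_+$ from Lemma \ref{Lem:Sauer3.13Analogon} and verifying that $f(B'_p,1) \subseteq B_r^M(p)$: points of $B_r^M(p) \cap M'_+$ are fixed by $f(\cdot,1)$, while a point of $A_S$ has the form $\pi(c_x(t))$ with $x \in A'$ and $t \in (T_x/2, T_x)$, whose image under $f(\cdot,1)$ is $\pi(c_x(T_x)) \in A = B_r^M(p) \cap \partial M'_+$. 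Applying this to any $y$ in the intersection yields $f(y,1) \in B_r^M(p) \cap B_r^M(p')$, so $d(p,p') < 2r$ as claimed.

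With this in hand, the degree bound is immediate: any $U' \in \mathcal{U}$ intersecting a given $U$ coming from $B_r^M(p)$ must itself come from some $B_r^M(p')$ with $d(p,p') < 2r$. The $(r/2)$-discreteness of $\mathfrak{M}$ combined with the packing estimate \cite{Sauer} Lemma 3.2 limits the number of such $p'$ to at most $N(n, r/2, 2r)$, and each contributes at most $\nu$ pieces to $\mathcal{U}$, giving total degree at most $\nu \cdot N(n, r/2, 2r) = D$. The only non-routine step is the flow-back computation $f(B'_p,1) \subseteq B_r^M(p)$; this is precisely where the deliberate choice to extend only along the half-segment $(T_x/2, T_x)$ when constructing $A_S$ matters, since it forces the flow to terminate in $A$ rather than drift outside the original ball.
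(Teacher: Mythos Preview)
Your proof is correct and follows the same approach as the paper: a volume-packing argument for the vertex count, and the bound $N(n,r/2,2r)$ on the number of nearby centers (times $\nu$) for the degree. In fact you are more careful than the paper on one point: the paper tacitly assumes that if two sets of $\mathcal{U}$ intersect then their parent balls $B_r^M(p)$, $B_r^M(p')$ intersect, whereas your flow-back computation $f(B'_p,1)\subseteq B_r^M(p)$ supplies the missing justification, since the flow-line extensions $A_S$ lie outside the original ball.
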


\begin{proof}
We will first derive a bound on the cardinality of a general $(r/2)$-discrete subset $A$ of $M_+$; this also bounds the cardinality of $\mathfrak{M}$. If $x \in X_+$ is a preimage of some $p \in A$, then $d_{\Gamma}(x) \geq \epsilon = \epsilon(n)/4$. By choice of $r = \epsilon(n)/32 = \epsilon/8$, we see that for all $0 < \rho \leq 2r$ the ball $B_{\rho}^M(p)$ is isometric to $B_{\rho}^X(x)$; in particular, their volumes coincide. Using the curvature assumptions, we conclude that $\Vol_X(B_{\rho}^X(x)) \geq \Vol_{\R^n}(B_{\rho}^{\R^n})$. Moreover, for all $\rho \leq r/4$, the $\rho$-balls around the points in $A$ are disjoint. Hence $\{ B_{r/4}^M(p) : p \in A \}$ is a family of disjoint balls of volume at least $\Vol_{\R^n}(B_{r/4}^{\R^n}) =: V(n)$ each. Thus
\[
|\mathfrak{M}| \leq |A| \leq \frac{\Vol(M)}{V(n)}.
\]
Constructing $\mathcal{U}$, we cut a ball around some $p \in \mathfrak{M}$ in at most $\nu$ pieces, hence
\[
|\mathcal{U}| \leq \nu \cdot |\mathfrak{M}| \leq \nu \cdot \frac{\Vol(M)}{V(n)} = C \cdot \Vol(M).
\]
We now turn to the degree bound $D$. Recall that in the construction of $\mathcal{U}$, we started with balls $B_r^M(p)$ for $p \in \mathfrak{M}$. Assume two such radius-$r$-balls around $p, p' \in \mathfrak{M}$ intersect, then we find lifts $x, x' \in X$ of $p, p'$ such that $d_X(x,x') < 2r$. The lift of $\mathfrak{M}$ in $X$ is also $(r/2$)-discrete, hence the set of lifts whose balls intersect $B_r^X(x)$ is a $(r/2)$-discrete subset of $B_{2r}^X(x)$. By \cite{Sauer} Lemma 3.2, that cardinality is bounded by $N(n, r/2, 2r)$; in other words, at most $N(n, r/2, 2r)$ many $r$-balls around points of $\mathfrak{M}$ can have a nonempty intersection. As $N(n, r/2, 2r)$ depends only on $n$, this defines the constant $\lambda = \lambda(n)$ needed for Lemma \ref{Lem:Verfeinerung_gute_Ueberdeckung}. Again using that a ball was cut into at most $\nu$ pieces during the construction of $\mathcal{U}$, we get that a set in $\mathcal{U}$ intersects at most
\[
\nu \cdot N(n, r/2, 2r) = D
\]
other covering sets.
\end{proof}

Summarizing all the above statements, we have proven our main result:

\begin{satz}\label{Satz:Hauptresultat_Sichtbarkeit}
$(M_+, \partial M_+)$ is (as a pair) homotopy equivalent to a $(D, C\cdot\Vol(M))$-simplicial pair, where $C = C(n)$ and $D = D(n)$ are constants depending only on the dimension $n$. {\hfill$\square$}
\end{satz}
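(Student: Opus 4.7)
The plan is that essentially all the real work is already packaged in the two preceding lemmas, so the proof of Theorem~\ref{Satz:Hauptresultat_Sichtbarkeit} amounts to glueing Lemma~\ref{Lem:Homotopieaequivalenz_zu_Simplizialkomplex} and Lemma~\ref{Lem:Sauer3.15Analogon} together. The overall strategy, as sketched in the informal preamble to Section~\ref{Kapitel:Hauptresultat}, is: construct a flow-stable open cover $\mathcal{U}$ of a suitable enlargement $N_+$ of the shrunken thick part $M'_+$; pass to the nerve complex via a nerve-lemma style argument; and then bound vertex count and vertex degree in the nerve by volume.

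Concretely, the steps I would carry out in order are as follows. First, invoke Lemma~\ref{Lem:Homotopieaequivalenz_zu_Simplizialkomplex}, which gives a homotopy equivalence of pairs
\[
(M_+, \partial M_+) \;\simeq\; (N(\mathcal{U}), N(\mathcal{V})).
\]
This step uses: the flow $f$ from Lemma~\ref{Lem:Sauer3.13Analogon} to identify $M'_+$ with $N_+$ up to homotopy (Lemma~\ref{Lem:Sauer3.14Analogon}); the fact that $\mathcal{U}$ is good by construction (Lemma~\ref{Lem:Verfeinerung_gute_Ueberdeckung}); and that $\mathcal{V}$ is a good cover of $N_0$ with nerve sitting naturally inside $N(\mathcal{U})$. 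Second, apply Lemma~\ref{Lem:Sauer3.15Analogon}, which already states the combinatorial bound: $N(\mathcal{U})$ is a $(D, C \cdot \Vol(M))$-simplicial complex with $C = C(n)$, $D = D(n)$. Since $N(\mathcal{V}) \subseteq N(\mathcal{U})$ is a subcomplex, the pair $(N(\mathcal{U}), N(\mathcal{V}))$ is automatically a $(D, C \cdot \Vol(M))$-simplicial pair. Combining these two gives the theorem.

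The main obstacle has already been handled in the preceding machinery, so I would emphasize where it lay rather than rediscovering it. The delicate point is not the final packaging but the construction of a flow-stable \emph{good} cover in the visibility setting, where, unlike in the pinched case treated in \cite{Sauer}, the parabolic stabilizers are not a priori virtually nilpotent and the commuting-isometry flow is unavailable. This forced the replacement by the geodesic flow toward/from the parabolic fixed point, the cut-and-extend construction of stable covering sets, and the cube-grid refinement in Lemma~\ref{Lem:Verfeinerung_gute_Ueberdeckung} that repairs the loss of contractibility caused by the cut. Once those are in hand, the present theorem is just a two-line assembly, and I would present it simply as such, noting explicitly that the dimensional dependence of $C$ and $D$ is inherited from $\nu(n)$, $r = \epsilon(n)/32$, and $N(n, r/2, 2r)$ appearing in Lemma~\ref{Lem:Sauer3.15Analogon}.
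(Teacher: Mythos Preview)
Your proposal is correct and matches the paper's own treatment exactly: the theorem is stated there as an immediate summary (marked with a $\square$ and no separate proof), obtained by combining Lemma~\ref{Lem:Homotopieaequivalenz_zu_Simplizialkomplex} with Lemma~\ref{Lem:Sauer3.15Analogon}, and your observation that $N(\mathcal{V}) \subseteq N(\mathcal{U})$ automatically inherits the $(D, C\cdot\Vol(M))$ bounds is precisely the missing triviality needed for the pair statement.
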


\subsection{Applications}

Bounds on the homology are straightforward consequences of the main result Theorem \ref{Satz:Hauptresultat_Sichtbarkeit}.

\begin{satz}\label{Satz:Freier_Anteil_beschraenkt_durch_Volumen}
There is a constant $E = E(n) > 0$ depending only on $n$, such that
\[
b_k(M;\K) \leq E \cdot \Vol(M)
\]
for all $k\in\N_0$, where $b_k(M;\K) = \dim_{\K} H_k(M;\K)$ is the $k$-th Betti number of $M$ with coefficients in the field $\K$ (of arbitrary characteristic).
\end{satz}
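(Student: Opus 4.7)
My plan is to derive the Betti number bound from Theorem \ref{Satz:Hauptresultat_Sichtbarkeit} together with the structural information on the thin part provided by Theorem \ref{Satz:DickDuennZerlegungSichtbarkeit}, via a straightforward Mayer--Vietoris argument.

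The first observation is purely combinatorial: in any $(D,V)$-simplicial complex $S$ the number of $k$-simplices is at most $V\cdot\binom{D}{k}$, since each vertex is contained in at most $\binom{D}{k}$ $k$-simplices (degree $\leq D$). Hence $b_k(S;\K) \leq \dim_{\K} C_k(S;\K) \leq V\cdot\binom{D}{k}$ for every field $\K$. Applying this to the simplicial pair $(N(\mathcal{U}), N(\mathcal{V}))$ from Theorem \ref{Satz:Hauptresultat_Sichtbarkeit} and using that $(M_+,\partial M_+)$ is homotopy equivalent to it as a pair, I obtain
\[
b_k(M_+;\K),\ b_k(\partial M_+;\K) \leq \binom{D(n)}{k}\cdot C(n)\cdot\Vol(M) =: E_1(n)\cdot\Vol(M).
\]

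Next I bound $b_k(M_-;\K)$. By Theorem \ref{Satz:DickDuennZerlegungSichtbarkeit}, every component $U$ of $M_-$ is either a tube ($U \simeq \S^1$, so $b_0(U)=b_1(U)=1$ and $b_k(U)=0$ otherwise) or a cusp, which strong-deformation retracts onto its boundary $\partial U$. The boundary $\partial U$ is a single component of $\partial M_+$, so summing over the components,
\[
b_k(M_-;\K) \leq 2\cdot\#\{\text{components of }M_-\} + b_k(\partial M_+;\K) \leq E_2(n)\cdot\Vol(M),
\]
where I used Lemma \ref{Lem:LineareSchrankeAnzahlKomponenten} to control the number of components.

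Finally, I set up Mayer--Vietoris. Let $A$ be an open collar neighborhood of $M_+$ in $M$ and $B$ an open collar neighborhood of $M_-$ in $M$, chosen so that $A\cup B = M$, $A\simeq M_+$, $B\simeq M_-$ and $A\cap B$ is a collar of $\partial M_+$, hence homotopy equivalent to $\partial M_+$. The exact Mayer--Vietoris sequence yields
\[
b_k(M;\K) \leq b_k(M_+;\K) + b_k(M_-;\K) + b_{k-1}(\partial M_+;\K),
\]
and all three terms on the right are bounded by a dimension-dependent constant times $\Vol(M)$, giving the claim with $E:=2E_1+E_2$. No step is a real obstacle here; the only mild subtlety is to ensure that the collars exist globally (which is immediate from the product structure of cusps in Lemma \ref{Lem:Spitzen} and the tubular neighborhood structure of tubes in Lemma \ref{Lem:Roehren}), and that the combinatorial bound on $b_k$ of a $(D,V)$-simplicial complex is applied to all relevant subcomplexes uniformly in $\K$. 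Since the bound $\dim_\K C_k \leq V\cdot\binom{D}{k}$ is characteristic-free, the resulting estimate is valid for arbitrary coefficient fields.
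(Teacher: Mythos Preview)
Your proof is correct and follows essentially the same Mayer--Vietoris strategy as the paper: bound the Betti numbers of $M_+$ and $\partial M_+$ via the simplicial model of Theorem~\ref{Satz:Hauptresultat_Sichtbarkeit}, use the structure of the thin components from Theorem~\ref{Satz:DickDuennZerlegungSichtbarkeit}, and combine. The only cosmetic difference is that the paper first contracts the cusps to pass to the compact manifold $M_C \simeq M$ and then runs Mayer--Vietoris on $M_C = M_+ \cup (\text{tubes})$, whereas you keep the cusps in $M_-$ and absorb their contribution into $b_k(\partial M_+)$ via the retraction of Lemma~\ref{Lem:Spitzen}; both variants yield the same bound up to the choice of constant.
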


\begin{proof}
The proof is a standard argument utilizing the Mayer-Vietoris sequence. Note that it is sufficient to construct constants $E(k,n)$ also depending on the respective degree $k=0,\ldots,n$, since $E = \max_k E(k,n)$ will then satisfy the initial statement.

As $M$ is connected, $b_0(M;\K) = 1$. Hence $E(0,n) := 1/V(n)$ yields the statement for $k=0$, where $V(n)$ is a uniform lower bound on the volume of all complete Riemannian $n$-manifolds with $-1\leq K < 0$ (\cite{BGS} Corollary 8.4).

By the thick-thin decomposition Theorem \ref{Satz:DickDuennZerlegungSichtbarkeit}, $M$ is homotopy equivalent to its compact part $M_C$, which in turn is obtained by gluing the tubes to the thick part $M_+$ along their common boundary $\partial_{\mathcal{T}} M_+$. Let $\mathcal{T}$ denote the set of all tubes of $M$. By Mayer-Vietoris we thus have an exact sequence
\begin{center}
\begin{tikzcd}[column sep = small]
\ldots \ar[r]		&		H_k( \bigcup\limits_{T\in \mathcal{T}} T; \K) \oplus H_k(M_+; \K) \ar[r, "\alpha"]		&		H_k(M_C; \K) \ar[r, "\beta"]		&		H_{k-1}( \partial_{\mathcal{T}} M_+; \K) \ar[r]		&		\ldots
\end{tikzcd}
\end{center}
and hence
\begin{equation}\label{Gleichung:Dimensionen_Mayer_Vietoris}
\dim_{\K} H_k(M_C; \K) = \dim_{\K} \im (\beta) + \dim_{\K} \im (\alpha).
\end{equation}
Note that for a general $(A,B)$-simplicial complex, the number of $k$-simplices is bounded by $A^k \cdot B$. By our main result Theorem \ref{Satz:Hauptresultat_Sichtbarkeit} we thus get
\[
\dim_{\K} H_{k-1}( \partial_{\mathcal{T}} M_+; \K) \leq D^{k-1} \cdot C \cdot \Vol(M).
\]
Similarly,
\[
\dim_{\K} H_k(M_+; \K) \leq D^k \cdot C \cdot \Vol(M).
\]
As there are at most $C \cdot \Vol(M)$ many tubes\footnote{The $C$ used here, i.e. the one from the thick-thin decomposition Theorem \ref{Satz:DickDuennZerlegungSichtbarkeit}, is a different one than the $C$ from the main result Theorem \ref{Satz:Hauptresultat_Sichtbarkeit}. Still, we just use $C$ for both of them, as we could simply take the maximum.}, the tubes are disjoint and all homotopy equivalent to $\S^1$, we have
\[
\dim_{\K} H_k( \bigcup\limits_{T\in \mathcal{T}} T; \K) \leq C \cdot \Vol(M).
\]
Hence using (\ref{Gleichung:Dimensionen_Mayer_Vietoris}), we deduce
\[
\dim_{\K} H_k(M_C; \K) \leq (D^{k-1} + D^k + 1) \cdot C \cdot \Vol(M).
\]
As $M_C \simeq M$, the values $E(k,n) := (D^{k-1} + D^k + 1) \cdot C$ satisfy the statement.
\end{proof}

A similar result holds for the torsion part of the homology.

\begin{satz}\label{Satz:Torsionsanteil_beschraenkt_durch_Volumen}
There is a constant $F = F(n) > 0$ depending only on $n$, such that
\[
\log | \tors H_k(M;\Z) | \leq F \cdot \Vol(M)
\]
for all $k\in\N_0$, where for $n=3$ the case $k=1$ has to be excluded.
\end{satz}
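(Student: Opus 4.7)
The plan is to mimic the strategy of \cite{Sauer}: exploit the efficient simplicial pair provided by Theorem \ref{Satz:Hauptresultat_Sichtbarkeit}, combine it with a combinatorial bound on the integral torsion of bounded-degree simplicial complexes, and then transfer the estimate from $(M_+, \partial M_+)$ to $M$ via the Mayer-Vietoris sequence attached to the decomposition $M \simeq M_C = M_+ \cup (\text{tubes})$ supplied by the thick-thin decomposition (Theorem \ref{Satz:DickDuennZerlegungSichtbarkeit}).

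I would first invoke the key combinatorial input appearing in \cite{Sauer} (itself built on Gabber-type estimates): there is a constant $c = c(D) > 0$ such that every $(D, N)$-simplicial complex $T$ satisfies $\log |\tors H_k(T; \Z)| \leq c \cdot N$ for all $k$, and analogously for simplicial pairs. Applied to the $(D(n), C(n) \cdot \Vol(M))$-simplicial pair $(S, S')$ homotopy equivalent to $(M_+, \partial M_+)$ from Theorem \ref{Satz:Hauptresultat_Sichtbarkeit}, this yields constants $F_1 = F_1(n) > 0$ with
\[
\log |\tors H_k(M_+; \Z)|, \quad \log |\tors H_k(\partial M_+;\Z)|, \quad \log |\tors H_k(M_+, \partial M_+;\Z)| \; \leq \; F_1 \cdot \Vol(M).
\]

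To pass from the thick part to $M$ itself, I would use the thick-thin decomposition: $M_C$ is obtained by gluing the tubes (each homotopy equivalent to $\S^1$, hence with trivial torsion homology) to $M_+$ along the components of $\partial_{\mathcal{T}} M_+ \subseteq \partial M_+$, and there are at most $C \cdot \Vol(M)$ such components. The Mayer-Vietoris long exact sequence for this gluing, combined with the elementary submultiplicative inequality $|\tors B| \leq |\tors A| \cdot |\tors C|$ for short exact sequences $0 \to A \to B \to C \to 0$, then bounds $\log |\tors H_k(M;\Z)| = \log |\tors H_k(M_C;\Z)|$ by a finite sum of terms, each of which is already controlled linearly in $\Vol(M)$ by the previous step. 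This produces the desired $F = F(n)$.

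The main obstacle, and the source of the excluded case, is at $n=3$, $k=1$. Here the tubes are honest solid tori glued to $M_+$ along boundary tori $\T^2$; the meridians of these tori are killed in the tube but sent via the Mayer-Vietoris connecting map into $H_1(M_+)$, so torsion-free elements of $H_1(\partial_{\mathcal{T}} M_+)$ can contribute arbitrarily large torsion to $H_1(M)$ which is invisible to the torsion of any of the three pieces. This is precisely the obstruction realized by Thurston's hyperbolic Dehn filling, from which the explicit counterexample in \cite{Sauer} is constructed. For $n \geq 4$ or $k \neq 1$, the relevant Mayer-Vietoris entries are either free (as boundary components are higher-dimensional sphere bundles) or have already been controlled, so this pathology does not occur and the routine bookkeeping closes the argument.
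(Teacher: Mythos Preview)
Your proposal is correct and follows essentially the same route as the paper: the paper's proof simply states that, with the analogous statements (Theorem~\ref{Satz:Hauptresultat_Sichtbarkeit} and the thick-thin decomposition) in hand, the argument is literally the one given for \cite{Sauer} Theorem~1.2, which is exactly the combinatorial-torsion-bound plus Mayer--Vietoris scheme you outline. Your discussion of the $n=3$, $k=1$ exclusion is also on point and matches the paper's remark that the proof in \cite{Sauer} goes through for all other degrees in dimension~$3$.
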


\begin{proof}
Using our analogous statements, the proof is literally the same as the one for \cite{Sauer} Theorem 1.2. Note that in \cite{Sauer}, the case of $n=3$ is universally excluded, although the proof given there also holds for all degrees $k \neq 1$ in dimension $n=3$.
\end{proof}

\cite{Sauer} also gives an example of why torsion in the first homology can in general not be bounded in dimension $3$.

Finally, we state an analogue to \cite{Sauer} Theorem 1.5; let $\mathfrak{Htp}_n(V)$ denote the number of homotopy classes of $n$-dimensional complete Riemannian visibility manifolds of volume at most $V < \infty$ and with sectional curvature $-1 \leq K < 0$.

\begin{satz}\label{Satz:Homotopie-und_Homoeomorphietypen_zaehlen}
For $n \geq 4$, there exist constants $\alpha = \alpha(n), \beta = \beta(n) > 0$ only depending on $n$, such that
\[
\alpha \cdot V \cdot \log V \leq \log \mathfrak{Htp}_n(V) \leq \beta \cdot V \cdot \log V
\]
for sufficiently large $V > 0$.
\end{satz}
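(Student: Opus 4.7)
The proposal is to split the proof into the two inequalities and, as the statement suggests, carry over the strategy of \cite{Sauer} Theorem 1.5 almost verbatim, with our Theorem \ref{Satz:Hauptresultat_Sichtbarkeit} and Theorem \ref{Satz:DickDuennZerlegungSichtbarkeit} replacing their analogues. The two sides use quite different ideas: the upper bound is combinatorial and uses our efficient simplicial model, while the lower bound is number-theoretic and uses congruence covers of arithmetic hyperbolic manifolds (which are, in particular, visibility manifolds).

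For the upper bound, I would start from the observation that by Theorem \ref{Satz:DickDuennZerlegungSichtbarkeit}, every $M$ is homotopy equivalent to the compact manifold $M_C$ obtained by gluing the tubes (at most $C \cdot V$ of them, each homotopy equivalent to $\S^1$) to $M_+$ along $\partial_\mathcal{T} M_+$. By Theorem \ref{Satz:Hauptresultat_Sichtbarkeit}, $(M_+, \partial M_+)$ is homotopy equivalent to a $(D, C V)$-simplicial pair $(S, S')$. Hence the homotopy type of $M$ is determined by the isomorphism type of $(S, S')$ together with, for each of the at most $C V$ tubes, a free homotopy class of loop in $S'$ along which the tube is attached. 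A standard counting argument shows that the number of isomorphism classes of $(D, N)$-simplicial pairs is bounded by $\exp(c_1 N \log N)$, since each vertex has at most $D$ neighbours and so the complex is specified by choosing, for each vertex, a subset of at most $D$ neighbours among $N$ candidates (together with higher-simplex information, which is bounded in terms of $D$ alone). The fundamental group of $S'$ is generated by at most $c_2 N$ elements, so the number of relevant free homotopy classes is polynomial in $N$, giving an additional factor of $\exp(c_3 N \log N)$ for the $C V$ choices of attaching circles. Taking $N = \lfloor C V \rfloor$ and multiplying yields $\log \mathfrak{Htp}_n(V) \leq \beta \cdot V \log V$.

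For the lower bound, I would fix an arithmetic hyperbolic $n$-manifold $M_0$ (with $n \geq 4$) defined via a standard arithmetic lattice in $\operatorname{SO}(n,1)$, and consider its congruence covers $M_I$ indexed by ideals $I$ of the underlying number ring. Since hyperbolic manifolds have constant curvature $-1$ they satisfy the visibility axiom, so each $M_I$ is admissible. One then invokes the counting results for congruence subgroups (the relevant statement goes back to Borel, with the sharp $V \log V$ lower bound in higher dimensions established by Burger--Gelander--Lubotzky--Mozes and used in \cite{Sauer}) to exhibit at least $\exp(\alpha V \log V)$ such covers of volume at most $V$ with pairwise non-isomorphic fundamental groups. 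By Mostow rigidity in dimension $n \geq 4$ (in fact already by the homotopy-type refinement of Mostow, or simply because non-isomorphic $\pi_1$'s forbid any homotopy equivalence between aspherical manifolds), these are pairwise non-homotopy-equivalent.

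The main obstacle I expect is on the upper-bound side, in honestly bookkeeping the attaching data for the tubes so that one produces every homotopy type of $M$ from the simplicial model and the finite gluing data, without overcounting beyond an $\exp(V \log V)$ factor. In particular, one must argue that the homotopy type of $M_C$ really is determined by $(S, S')$ plus the (free) homotopy class of each attaching loop in $S'$; this is where the structure given by Theorem \ref{Satz:DickDuennZerlegungSichtbarkeit} --- that tubes are $D^{n-1}$-bundles over $\S^1$ whose boundary retracts to the attaching torus in a controlled way --- enters. Once this reduction is made, the combinatorics is essentially the same as in \cite{Sauer}, and the constants $\alpha, \beta$ depend only on $n$ through the Margulis constant, $C(n)$, $D(n)$, and the degree of the fixed arithmetic lattice chosen for the lower bound.
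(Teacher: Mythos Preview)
Your proposal is correct and matches the paper's proof, which simply says ``the proof is the same as for \cite{Sauer} Theorem~1.5''; you have correctly identified both halves of that argument (combinatorial upper bound from the efficient simplicial model, arithmetic lower bound from congruence covers of a fixed hyperbolic lattice). One remark on the obstacle you flag: for $n \geq 4$ the inclusion $\partial T \hookrightarrow T$ of a tube boundary into the tube induces an isomorphism on $\pi_1$, so by van~Kampen and asphericity of $M$ the homotopy type of $M$ is already determined by $\pi_1(M_+)$ and hence by the simplicial complex $S$ alone --- the tube-attaching data and the worry about counting free homotopy classes in $S'$ can be dropped entirely.
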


\begin{proof}
Again, the proof is the same as for \cite{Sauer} Theorem 1.5.
\end{proof}

Note that we are not able to give a statement analogous to \cite{Sauer} Corollary 1.6 -- i.e. counting the number of homeomorphism types --, as the tool used for that proof only holds in strictly negative curvature.

\section{Examples of negatively curved visibility manifolds with curvature not bounded away from zero}\label{Kapitel:Beispiele_Sichtbarkeit}

In \cite{JiWu}, Ji and Wu constructed a complete, finite volume visibility surface with sectional curvature $-1 \leq K < 0$ which does not satisfy the pinching condition\footnote{In fact they showed that the surface is not Gromov hyperbolic; recall that a manifold with pinched metric would already be Gromov hyperbolic.} $-1 \leq K \leq a < 0$ for any $a < 0$. We will extend this example to arbitrary dimensions $n > 2$.

Note that this does not provide an answer to the question whether every complete, finite volume visibility manifold with sectional curvature $-1 \leq K < 0$ can also be equipped with a complete, finite volume pinched metric. In fact, the surface in \cite{JiWu} was constructed out of a hyperbolic surface by flattening the hyperbolic metric along the (single) cusp, so it does \textit{not} serve as an example to separate these two classes of manifolds up to homeomorphism or diffeomorphism.

Similarly, to obtain our examples in dimension $n > 2$, we will start with a suitable non-compact hyperbolic $n$-manifold of finite volume and flatten the metric along a single cusp. The major difficulty compared to the $2$-dimensional case obviously lies in the more delicate computation of all the sectional curvatures. We remark that our resulting manifolds will satisfy the curvature condition $-11 \leq K < 0$, so technically, they have to be rescaled to fulfill the initially mentioned assumption $-1 \leq K < 0$.

Let $M$ be a hyperbolic $n$-manifold of finite volume which is not compact, i.e. has at least one cusp $S$; we can assume that $S$ has an $(n-1)$-dimensional torus $\T^{n-1}$ as its cusp cross section (see e.g. \cite{LongReid}). If $g$ denotes the Riemannian metric of $M$, then on $S \cong \T^{n-1} \times [0,\infty)$ it takes the form of a warped product metric
\[
\exp(-2t)\, ds^2 + dt^2,
\]
where $ds^2$ is the flat metric on the $\T^{n-1}$-factor.

Just as in \cite{JiWu}, let $h: [0,\infty) \rightarrow \R$ be a smooth function with the following properties:
\begin{enumerate}
\item $h$ is positive and monotonically decreasing,
\item $h''/h$ is positive and monotonically decreasing,
\item $h(t) = \exp(-t)$ for $0 \leq t \leq 1$,
\item $h(t) = 1/t^2$ for $t\geq 3$.
\end{enumerate}
The idea is to replace the warping function $\exp(-t)$ of the hyperbolic metric by $h(t)$. This can be seen as flattening the cusp, since $h(t) = 1/t^2$ (for large $t$) will decrease much more slowly than $\exp(-t)$. More precisely, on $S \cong \T^{n-1} \times [0,\infty)$, we pass to the warped product metric
\[
h^2(t)\, ds^2 + dt^2
\]
and hereby get a new metric $g'$ on all of $M$ (where $g = g'$ on $M \setminus S$). Note that $g'$ is smooth and complete for the same reasons as in \cite{JiWu}.

We will now show that $M$ still has finite volume with respect to $g'$. Letting $M' := M \setminus \T^{n-1} \times (3,\infty)$, we get
\[
M' = M'' \cup \bigcup\limits_{i=1}^k S_i,
\]
where the $S_i$ are the possible other (open) cusps of $M$ (different from $S$) and $M''$ denotes the compact part of $M$, which is obtained from $M$ by removing the $S_i$ (for $i=1,\ldots,k$) and the part $\T^{n-1} \times (3,\infty) \subseteq \T^{n-1} \times [0,\infty) \cong S$ of $S$. Since the metric on the $S_i$ did not change, we still have $\Vol_{g'}(S_i) = \Vol_g(S_i) < \infty$; moreover, $\Vol_{g'}(M'') < \infty$ by compactness. Hence $\Vol_{g'}(M') < \infty$ and we deduce
\begin{align*}
\Vol_{g'}(M) &= \Vol_{g'}(M') + \Vol_{g'}(\T^{n-1} \times (3,\infty)) \\
&= \Vol_{g'}(M') + \int_{\T^{n-1}} \int_{3}^{\infty} h(t) \,dt \,ds \\
&= \Vol_{g'}(M') + \int_{\T^{n-1}} \int_{3}^{\infty} \frac{1}{t^2} \,dt \,ds \\
&= \Vol_{g'}(M') + \int_{\T^{n-1}} \frac{1}{3} \,ds \\
&< \infty,
\end{align*}
because the last integral is also finite by compactness of $\T^{n-1}$.

The curvature computations are contained in the following lemma.

\begin{lem}\label{Lem:Schnittkruemmung_neue_Metrik}
Let $K_M$ denote the sectional curvature on $M$ with respect to the new metric $g'$. Then:
\begin{itemize}
\item Outside of $S$ as well as for $t\leq 1$ on $S$ we have $K_M = -1$.
\item For $1 < t < 3$ we have $-11 \leq K_M \leq -0.04$ (independent of $t$).
\item For $3 \leq t$ we have $- \frac{6}{t^2} \leq K_M \leq - \frac{4}{t^2}$ (depending on $t$).
\end{itemize}
\end{lem}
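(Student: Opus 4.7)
I would first invoke the standard Bishop--O'Neill formulas for a warped product with flat fiber to reduce the computation to scalar estimates on $h''/h$ and $(h'/h)^2$. On the cusp $\T^{n-1} \times [0,\infty)$ with metric $dt^2 + h(t)^2 ds^2$, in the orthonormal frame $\{\partial_t, E_1, \ldots, E_{n-1}\}$---where each $E_i$ is $h^{-1}$ times a parallel unit vector on $\T^{n-1}$---the only non-vanishing sectional curvatures (up to the usual symmetries of $R$) are
\[
K(\partial_t, E_i) = -\frac{h''}{h}, \qquad K(E_i, E_j) = -\Bigl(\frac{h'}{h}\Bigr)^{\!2} \quad (i \neq j).
\]
Expanding $R(X,Y,Y,X)$ for any orthonormal pair $X, Y$ in this frame yields
\[
K(X \wedge Y) = \sum_{a<b}(X^a Y^b - X^b Y^a)^2 \, K(e_a \wedge e_b),
\]
whose coefficients are non-negative and sum to $|X|^2|Y|^2 - \langle X, Y\rangle^2 = 1$. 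Hence $K_M(\sigma)$ always lies between $-h''/h$ and $-(h'/h)^2$, so it suffices to bound these two quantities pointwise in $t$.

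The endpoint regimes are then immediate. Substituting $h(t) = e^{-t}$ (valid off $S$ and for $t \leq 1$) gives $h''/h = (h'/h)^2 = 1$, hence $K_M = -1$. Substituting $h(t) = t^{-2}$ (valid for $t \geq 3$) gives $h''/h = 6/t^2$ and $(h'/h)^2 = 4/t^2$, which sandwiches $K_M$ in $[-6/t^2, -4/t^2]$.

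For the transition $1 < t < 3$ the function $h$ is only implicit, so I would extract bounds from the defining properties (1)--(4) together with smooth matching at the endpoints. Smoothness forces the boundary values $h(1) = e^{-1}$, $h'(1) = -e^{-1}$, $(h''/h)(1) = 1$ and $h(3) = 1/9$, $h'(3) = -2/27$, $(h''/h)(3) = 2/3$. Monotonicity of $h''/h$ then pins it to $[2/3, 1]$, so $-h''/h \in [-1, -2/3]$. Positivity of $h$ and of $h''/h$ forces $h'' > 0$, so $h'$ is increasing; together with $h$ itself decreasing, this gives $h(t) \in [1/9, e^{-1}]$ and $|h'(t)| \in [2/27, e^{-1}]$. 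Dividing the extreme ratios,
\[
\Bigl(\frac{h'(t)}{h(t)}\Bigr)^{\!2} \in \left[ \Bigl(\frac{2/27}{e^{-1}}\Bigr)^{\!2},\, \Bigl(\frac{e^{-1}}{1/9}\Bigr)^{\!2} \right] = \left[\frac{4e^2}{729},\, \frac{81}{e^2}\right] \subseteq [0.04, 11],
\]
so $-(h'/h)^2 \in [-11, -0.04]$. Combined with $-h''/h \in [-1, -2/3]$, the convex-combination bound from the first paragraph yields $-11 \leq K_M \leq -0.04$ on $(1,3)$, matching the lemma.

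The main obstacle is really the structural input in the first step---verifying that the warped-product Riemann tensor has no ``off-diagonal'' components in this frame, so that every 2-plane (including those mixing radial and tangential directions) is controlled by the two scalars $-h''/h$ and $-(h'/h)^2$ via a single convex combination. Once that standard warped-product fact is granted, the rest is pure monotonicity and boundary matching, and the specific constants $-11$ and $-0.04$ appear precisely as the extreme values of $(|h'|/h)^2$ on $[1,3]$.
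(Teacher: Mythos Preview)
Your proposal is correct and follows essentially the same route as the paper: both reduce to showing that every sectional curvature on the cusp is a convex combination of $-h''/h$ and $-(h'/h)^2$, and then bound these two scalars regionwise using monotonicity of $h$, $h'$, and $h''/h$ together with the boundary values forced by smooth matching at $t=1,3$. The only presentational difference is in how the convex-combination reduction is obtained: the paper plugs directly into the Bishop--O'Neill sectional curvature formula and chooses the orthonormal basis of $\Pi$ so that one vector has vanishing $\partial_t$-component, arriving explicitly at $K_M(\Pi) = -(h''/h)Y^2 - (h'/h)^2(1-Y^2)$, whereas you invoke the diagonality of the curvature operator in the frame $\{\partial_t, E_1,\dots,E_{n-1}\}$; the paper's derivation is thus slightly more self-contained, while yours packages the same computation as a standard structural fact. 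The numerical estimates on $(1,3)$ and the endpoint substitutions are identical.
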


\begin{proof}
As the metric hasn't changed outside of $S$, all sectional curvatures will still be $-1$ there. To compute the curvature on $S$, we will use the following formula.

\begin{adjustwidth}{25pt}{0pt}
\textbf{Fact.} (\cite{BO'N} S. 26) Let $(B, \langle \cdot, \cdot \rangle_B)$ and $(F, \langle \cdot, \cdot \rangle_F)$ be Riemannian manifolds and $f$ a positive, smooth function on $B$. Moreover, let $M = B \times_f F$ be the warped product with respect to $f$, i.e. in a point $m = (b,p) \in B \times F$ the metric $\langle \cdot, \cdot \rangle_M$ takes the form
\[
\langle X + V, Y + W \rangle_M = \langle X, Y \rangle_B + f^2(b) \cdot \langle V, W \rangle_F
\]
for $X + V, Y + W \in T_b B \oplus T_p F \cong T_m M$. If $\{ X+V, Y+W \}$ is an orthonormal basis of a tangent plane $\Pi$ in $T_m M$, then the sectional curvature $K_M(\Pi)$ of $M$ in $m = (b,p)$ along $\Pi$ satisfies the equation

\begin{equation}\label{Gleichung:Formel_Schnittkruemmung}
\begin{aligned}
K_M(\Pi) &= K_B(X,Y) \cdot \| X \wedge Y \|^2_B \\
 &\quad - f(b) \cdot \Big[ \langle W, W \rangle_F \cdot \nabla^2 f(X,X) - 2 \langle V, W \rangle_F \cdot \nabla^2 f(X,Y) \\
 &\hspace{6.1cm}+ \langle V, V \rangle_F \cdot \nabla^2 f(Y,Y) \Big] \\
 &\quad + f^2(b) \cdot \Big[ K_F(V,W) - \|(\grad f) (b)\|^2_B \Big] \cdot \| V \wedge W \|^2_F.
\end{aligned}
\end{equation}

Here $K_B$ and $K_F$ denote the sectional curvatures of $(B, \langle \cdot, \cdot \rangle_B)$ and $(F, \langle \cdot, \cdot \rangle_F)$ respectively, $\nabla^2 f$ the Hessian of $f$ and $(\grad f)$ the gradient of $f$. Furthermore, the terms $\| X \wedge Y \|^2_B$ and $\| V \wedge W \|^2_F$ are to be understood as follows: if $U$ is a vector space with scalar product $\langle \cdot, \cdot \rangle$, then
\[
\langle U_1 \wedge U_2, U_3 \wedge U_4 \rangle := \det \left( \begin{matrix} \langle U_1, U_3 \rangle & \langle U_1, U_4 \rangle \\ \langle U_2, U_3 \rangle & \langle U_2, U_4 \rangle \end{matrix} \right)
\]
is a canonical scalar product on the exterior power $\Lambda^2 (U)$.
\end{adjustwidth}

In our situation, $B = [0,\infty)$, $F = \T^{n-1}$, $f=h$, $\langle \cdot, \cdot \rangle_M = g'$ and the above formula (\ref{Gleichung:Formel_Schnittkruemmung}) can be simplified as follows. The first line on the right hand side vanishes because $[0,\infty)$ is $1$-dimensional and thus $K_B(X,Y) = 0$. Since the torus is flat, we also have $K_F(V,W) = 0$ in the last line. Note that we can assume $X = 0$ without restriction\footnote{If $V$ is an $n$-dimensional vector space and $U \subseteq V$ a $2$-dimensional subspace with basis $\{ u, u' \}$, we can construct an orthonormal basis $\{ \widetilde{u}, \widetilde{u}' \}$ of $U$ with $\widetilde{u}_1 = 0$ by letting $\widetilde{u} := u - (u_1/u'_1) \cdot u'$ and extending $\widetilde{u}$ to an orthonormal basis via the Gram-Schmidt process (and if $u_1$ or $u'_1$ is $0$, we can directly use Gram-Schmidt).}. Hence
\[
K_M(\Pi) = - f(b) \cdot \langle V, V \rangle_F \cdot \nabla^2 f(Y,Y) - f^2(b) \cdot \| (\grad f) (b) \|^2_B \cdot \| V \wedge W \|^2_F.
\]
As $f(b)$ is $h(t)$ in our situation (i.e. we will also write $(t,p)$ for $m = (b,p)$), we get
\[
\nabla^2 f = h'' \qquad\text{and}\qquad (\grad f) = h'.
\]
This yields
\[
K_M(\Pi) = - h(t) \cdot \langle V, V \rangle_{\T^{n-1}} \cdot h''(t) \cdot Y^2 - h^2(t) \cdot |h'(t)|^2 \cdot \| V \wedge W \|^2_{\T^{n-1}},
\]
where $Y \in \R$ (see above, $B = [0,\infty)$ is $1$-dimensional). Since $X + V = V$ was assumed to be normalized with respect to $\langle \cdot, \cdot \rangle_M$ -- i.e. $\langle V, V \rangle_M = 1$ --, utilizing the formula
\[
\langle V, W \rangle_M = h^2(t) \cdot \langle V, W \rangle_{\T^{n-1}} \qquad\text{for general } V, W \in T_p \T^{n-1} \subseteq T_m M
\]
(see the definition of the warped product) we get
\[
\langle V, V \rangle_{\T^{n-1}} = \frac{1}{h^2(t)} \cdot \langle V, V \rangle_M = \frac{1}{h^2(t)}.
\]
By the orthogonality of $X + V = V$ and $Y + W$ with respect to $\langle \cdot, \cdot \rangle_M$ we can also deduce
\begin{align*}
0 &= \langle X + V, Y + W \rangle_M \\
&= \langle X, Y \rangle_B + h^2(t) \cdot \langle V, W \rangle_{\T^{n-1}} \\
&= h^2(t) \cdot \langle V, W \rangle_{\T^{n-1}},
\end{align*}
i.e. $\langle V, W \rangle_{\T^{n-1}} = 0$ because $h > 0$. Since $Y + W$ was normalized with respect to $\langle \cdot, \cdot \rangle_M$, also
\begin{align*}
1 &= \langle Y + W, Y + W \rangle_M \\
&= \langle Y, Y \rangle_B + h^2(t) \cdot \langle W, W \rangle_{\T^{n-1}} \\
&= Y^2 + h^2(t) \cdot \langle W, W \rangle_{\T^{n-1}},
\end{align*}
holds, i.e.
\[
\langle W, W \rangle_{\T^{n-1}} = \frac{1 - Y^2}{h^2(t)}.
\]
Inserting all these terms into the definition of $\| V \wedge W \|^2_{\T^{n-1}}$, we get
\begin{align*}
\| V \wedge W \|^2_{\T^{n-1}} &= \langle V \wedge W, V \wedge W \rangle_{\T^{n-1}} \\
&= \det \left( \begin{matrix} \langle V, V \rangle_{\T^{n-1}} & \langle V, W \rangle_{\T^{n-1}} \\ \langle W, V \rangle_{\T^{n-1}} & \langle W, W \rangle_{\T^{n-1}} \end{matrix} \right) \\
&= \det \left( \begin{matrix} 1/h^2(t) & 0 \\ 0 & (1-Y^2)/h^2(t) \end{matrix} \right) \\
&= \frac{1 - Y^2}{h^4(t)}.
\end{align*}
The formula for the sectional curvature thus takes the form\footnote{Note that for $h(t) = \exp(-t)$ (for all $t$), we indeed get back the constant curvature $-1$ of the hyperbolic case.}
\begin{align*}
K_M(\Pi) &= - h(t) \cdot \frac{1}{h^2(t)} \cdot h''(t) \cdot Y^2 - h^2(t) \cdot |h'(t)|^2 \cdot \frac{1 - Y^2}{h^4(t)} \\
&= - \frac{h''(t)}{h(t)} \cdot Y^2 - \frac{|h'(t)|^2}{h^2(t)} \cdot (1 - Y^2) .
\end{align*}
Recall that we have $Y^2 \in [0,1]$; furthermore, $h'' > 0$ by properties 1. and 2. of $h$. So in particular, $K_M(\Pi) < 0$ for all $t \in [0,\infty)$. We also have $h'(t) \in [h'(1), h'(3)]$ for $t \in [1,3]$: if we had $h'(t) \notin [h'(1), h'(3)]$ for some $t \in (1,3)$, then $h'$ would have a proper extremum in the intervall $(1,3)$ (i.e. an extremum which is not also assumed on the boundary); but this extremum of $h'$ would mean that $h''$ vanishes at that point, which contradicts $h'' > 0$. Since $h'(1) = -\exp(-1)$ and $h'(3) = -2/3^3$, we thus get
\[
h'(t) \in [-1/e, -2/27] \qquad\text{for } t \in (1,3).
\]
We will now derive lower and upper bounds for the sectional curvature (depending on $t$).

\begin{itemize}
\item {$t\leq 1$}: As the metric hasn't changed here (i.e. $h(t) = \exp(-t)$), we still have $K_M \equiv -1$.

\item {$1<t<3$}: We compute
\begin{align*}
K_M(\Pi) &= - \frac{h''(t)}{h(t)} \cdot Y^2 - \frac{|h'(t)|^2}{h^2(t)} \cdot (1 - Y^2) \\
&\leq - \min\left( \frac{h''(t)}{h(t)}, \frac{|h'(t)|^2}{h^2(t)} \right) \cdot Y^2 - \min\left( \frac{h''(t)}{h(t)}, \frac{|h'(t)|^2}{h^2(t)} \right) \cdot (1 - Y^2) \\
&= - \min\left( \frac{h''(t)}{h(t)}, \frac{|h'(t)|^2}{h^2(t)} \right) \\
&\leq - \min\left( \inf\limits_{1<t<3} \frac{h''(t)}{h(t)}, \inf\limits_{1<t<3} \frac{|h'(t)|^2}{h^2(t)} \right) \\
&\leq - \min\left( \frac{h''(3)}{h(3)}, \frac{2^2/27^2}{h^2(1)} \right) \\
&= - \min\left( \frac{6/3^4}{1/3^2}, \frac{4/729}{1/e^2} \right) \\
&\leq -0.04
\end{align*}
and similarly
\begin{align*}
K_M(\Pi) &\geq - \max\left( \sup\limits_{1<t<3} \frac{h''(t)}{h(t)}, \sup\limits_{1<t<3} \frac{|h'(t)|^2}{h^2(t)} \right) \\
&\geq - \max\left( \frac{h''(1)}{h(1)}, \frac{1/e^2}{h^2(3)} \right) \\
&\geq -11.
\end{align*}

\item {$3\leq t$}: Here $h(t) = 1/t^2$ holds, so we can deduce
\begin{align*}
K_M(\Pi) &= - \frac{h''(t)}{h(t)} \cdot Y^2 - \frac{|h'(t)|^2}{h^2(t)} \cdot (1 - Y^2) \\
&= - \frac{6/t^4}{1/t^2} \cdot Y^2 - \frac{4/t^6}{1/t^4} \cdot (1 - Y^2) \\
&= - \frac{6}{t^2} \cdot Y^2 - \frac{4}{t^2} \cdot (1 - Y^2) \\
&= - \frac{4}{t^2} - \frac{2}{t^2} \cdot Y^2.
\end{align*}
Hence
\[
- \frac{6}{t^2} \leq K_M(\Pi) \leq - \frac{4}{t^2}.
\]
\end{itemize}
\end{proof}

In particular, we have shown that $(M,g')$ satisfies the curvature condition $-11 \leq K_M < 0$; on the other hand, it won't satisfy $K_M \leq a < 0$ for any $a < 0$, as $-6/t^2 \rightarrow 0$ for $t \rightarrow \infty$. Nonetheless, $M$ still is a visibility manifold:

\begin{lem}\label{Lem:Sichtbarkeit_neue_Metrik}
The universal cover of $(M,g')$ satisfies the visibility axiom.
\end{lem}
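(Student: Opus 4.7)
The plan is to verify the standard angle-at-a-point characterization of the visibility axiom (cf.\ \cite{BGS}): for every $p \in \tilde{M}$ and every $\epsilon > 0$, there exists $R > 0$ such that any geodesic segment $\sigma$ in $\tilde{M}$ with $d(p, \sigma) \geq R$ subtends angle less than $\epsilon$ at $p$. Fixing $p$ and $\epsilon$, I would distinguish two cases according to whether $\sigma$ stays outside the lifts of the deep cusp $\T^{n-1} \times [3, \infty) \subset S$ or enters one of them. In the first case the sectional curvature is pinched by $-11 \leq K \leq -0.04$ (Lemma~\ref{Lem:Schnittkruemmung_neue_Metrik}), so the standard Rauch/Toponogov comparison argument from the pinched negative curvature setting delivers the required angle bound once $R$ is large enough.

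For the main case I would work inside a single connected component of the preimage of the cusp, which is isometric to the warped product $W = \R^{n-1} \times_h [0, \infty)$ with metric $h(t)^2 |dx|^2 + dt^2$. Since $\R^{n-1}$ is flat and all its translations are Killing fields, every geodesic $s \mapsto (x(s), t(s))$ of $W$ satisfies the conservation laws
\[
h(t(s))^2 \dot{x}_i(s) = c_i, \qquad i = 1, \ldots, n-1,
\]
which together with the unit-speed condition $\dot{t}^2 + h(t)^2 |\dot{x}|^2 = 1$ yield $\dot{t}^2 = 1 - |\vec{c}|^2 / h(t)^2$. Every geodesic with $\vec{c} \neq 0$ is thus confined to $\{ t : h(t) \geq |\vec{c}| \}$, and, since $h$ decreases to $0$, it reaches a maximal height $t_{\max}$ before returning to the pinched region; the only rays escaping to infinity through $W$ are the vertical ones ($\vec{c} = 0$), realising the unique parabolic fixed point at infinity of that cusp.

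The angular bound for a deeply penetrating $\sigma$ I would then obtain by comparison with a Jacobi field along the vertical ray from $p$ to the parabolic fixed point. Lemma~\ref{Lem:Schnittkruemmung_neue_Metrik} yields $-K \geq 4/t^2$ along this ray, so every Jacobi field $J$ orthogonal to the ray with $J(0) = 0$ grows at least like $t^\alpha$, where $\alpha(\alpha - 1) = 4$ and hence $\alpha = (1 + \sqrt{17})/2 > 1$. The horizontal displacement of $\sigma$, bounded via the conservation law by an explicit integral in $|\vec{c}|$ and $t_{\max}$, is therefore small compared to $\| J(R) \|$ for large $R$, and the resulting angle at $p$ is of order $R^{-(\alpha - 1)}$, which tends to $0$.

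The principal obstacle is this last step: the decay $|K| \sim 1/t^2$ is exactly the borderline rate for the visibility axiom, so the conclusion depends essentially on the explicit lower bound $-K \geq 4/t^2$ (rather than merely $K < 0$) in order to force the Jacobi-field exponent strictly above one. In dimension two this is the computation of \cite{JiWu}; the higher-dimensional extension carried out here additionally requires the same bound on \emph{all} two-planes tangent to $W$, including the horizontal ones, which is precisely what Lemma~\ref{Lem:Schnittkruemmung_neue_Metrik} provides.
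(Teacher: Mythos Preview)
Your plan is viable in spirit but takes a much longer route than the paper, and the sketch has a genuine gap. The paper simply invokes the integral criterion of Eberlein--O'Neill (\cite{EO'N}, Proposition~5.9): a Hadamard manifold $X$ is a visibility manifold as soon as some point $x\in X$ satisfies
\[
\int_1^\infty k_V(t)\,t\,dt=\infty\qquad\text{for every unit }V\in T_xX,
\]
where $k_V(t)=\min\{|K(\Pi)|:\dot c_V(t)\in\Pi\}$. Taking $x$ over $\T^{n-1}\times\{0\}$, the cusp coordinate along any ray $c_V$ at time $t$ is at most $t$, so Lemma~\ref{Lem:Schnittkruemmung_neue_Metrik} gives $k_V(t)\geq 0.04/t^2$ for every $t\geq 1$; the integral diverges logarithmically and the proof is two lines. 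Your Jacobi-field estimate (superlinear growth from $-K\geq c/t^2$ with $c>0$) is exactly the mechanism behind this criterion, so you are re-deriving it by hand in this particular model; your closing remark that the $1/t^2$ decay is the borderline rate is correct and is precisely why the criterion is tailored to the situation.

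The gap is the dichotomy in your first paragraph. A segment $\sigma$ with $d(p,\sigma)\geq R$ need not stay in the pinched region \emph{or} enter a single lift of the deep cusp: it can enter and exit arbitrarily many lifts, interspersed with arbitrarily long pinched pieces, and you provide no device to sum the angular contributions of these pieces or to bound their number. The passage from ``horizontal displacement bounded via the conservation law'' to ``angle of order $R^{-(\alpha-1)}$'' is likewise asserted rather than argued (the Euclidean horizontal width of one excursion already scales like $t_{\max}^3$, so a genuine comparison with the Jacobi field---which along the vertical ray actually solves $J''=(6/t^2)J$, giving exponent $3$---is needed). Both issues can be repaired, but they are precisely what the Eberlein--O'Neill criterion packages into a single black box; cite it instead.
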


\begin{proof}
Similar to \cite{JiWu} we will utilize the following fact.

\begin{adjustwidth}{25pt}{0pt}
\textbf{Fact.} (\cite{EO'N} Proposition 5.9) Let $X$ be a Hadamard manifold. If there is a point $x \in X$ such that
\[
\int_1^{\infty} k_V(t) \cdot t \,dt = \infty
\]
for all normalized tangent vectors $V \in T_x X$, then $X$ satisfies the visibility axiom. Here $k_V(t) = \min_{\Pi \ni \dot{c}_V(t)} |K_X(\Pi)|$ denotes the minimum of the (absolute) sectional curvatures in $c_V(t)$ taken over all $2$-dimensional subspaces $\Pi \subseteq T_{c_V(t)} X$ with $\dot{c}_V(t) \in \Pi$, where $c_V$ is the unique geodesic with $c_V(0) = x$ and $\dot{c}_V(0) = V$.
\end{adjustwidth}

Denote the universal cover of $(M,g')$ by $X$. We choose $x \in X$ in the preimage of the boundary of $S$, i.e. $\pi(x) \in \T^{n-1} \times \{0\}$. Let $V \in T_x X$ be an arbitrary normalized tangent vector. Note that by Lemma \ref{Lem:Schnittkruemmung_neue_Metrik}, we have $K_X \leq -0.04/t^2$ for all $t \geq 1$; in particular, $|K_X| \geq 0.04/t^2$ and thus $k_V(t) \geq 0.04/t^2$. So we can compute
\[
\int_1^{\infty} k_V(t) \cdot t \,dt \geq \int_1^{\infty} \frac{0.04}{t^2} \cdot t \,dt = 0.04 \cdot \int_1^{\infty} \frac{1}{t} \,dt = \infty,
\]
and the statement follows by the above fact.
\end{proof}





\begin{bibdiv}
\begin{biblist}

\bib{Sauer}{article}{
    title={Homology and homotopy complexity in negative curvature},
    author={Bader, Uri},
		author={Gelander, Tsachik},
		author={Sauer, Roman},
    year={2016},
    eprint={https://arxiv.org/abs/1612.04871v5}
}

\bib{BGS}{book}{
   author={Ballmann, Werner},
   author={Gromov, Mikhael},
   author={Schroeder, Viktor},
   title={Manifolds of nonpositive curvature},
   series={Progress in Mathematics},
   volume={61},
   publisher={Birkh\"auser Boston, Inc., Boston, MA},
   date={1985},
   pages={vi+263},
   }
	
\bib{BenPet}{book}{
  title={Lectures on Hyperbolic Geometry},
  author={Benedetti, R.},
	author={Petronio, C.},
  isbn={9783540555346},
  series={Universitext (Berlin. Print)},
  url={https://books.google.de/books?id=iTbmytIqdpcC},
  year={1992},
  publisher={Springer Berlin Heidelberg}
}

\bib{BO'N}{article}{
	author = {Bishop, Richard},
	author = {O'Neill, B.},
	year = {1969},
	pages = {1--49},
	title = {Manifolds of negative curvature},
	volume = {145},
	journal = {Transactions of the American Mathematical Society}
}

\bib{Bowditch}{article}{
	author = {Bowditch, B. H.},
	journal = {Duke Math. J.},
	number = {1},
	pages = {229--274},
	publisher = {Duke University Press},
	title = {Geometrical finiteness with variable negative curvature},
	url = {https://doi.org/10.1215/S0012-7094-95-07709-6},
	volume = {77},
	year = {1995}
}
	
\bib{BridsonHaefliger}{book}{
  title={Metric Spaces of Non-Positive Curvature},
  author={Bridson, M.R.},
	author={H{\"a}fliger, A.},
  isbn={9783662124949},
  series={Grundlehren der mathematischen Wissenschaften},
  url={https://books.google.co.uk/books?id=M7XrCAAAQBAJ},
  year={2013},
  publisher={Springer Berlin Heidelberg}
}

\bib{Eberlein}{article}{
	author = {Eberlein, Patrick},
	year = {1980},
	pages = {435--476},
	title = {Lattices in Spaces of Nonpositive Curvature},
	volume = {111},
	journal = {Annals of Mathematics. Second Series}
}

\bib{EO'N}{article}{
	author = {Eberlein, P.},
	author = {O'Neill, B.},
	journal = {Pacific J. Math.},
	number = {1},
	pages = {45--109},
	publisher = {Pacific Journal of Mathematics, A Non-profit Corporation},
	title = {Visibility manifolds.},
	url = {https://projecteuclid.org:443/euclid.pjm/1102946601},
	volume = {46},
	year = {1973}
}

\bib{JiWu}{article}{
    title={On ends of finite-volume noncompact mainfolds of nonpositive curvature},
    author={Ji, Ran},
		author={Wu, Yunhui},
    year={2018},
    eprint={https://arxiv.org/abs/1812.02295v1}
}

\bib{LongReid}{article}{
	author = {Long, Darren},
	author = {Reid, Alan},
	year = {2000},
	pages = {285--296},
	title = {All Flat Manifolds Are Cusps of Hyperbolic Orbifolds.},
	volume = {2},
	journal = {Algebraic \& Geometric Topology}
}
	
\bib{Samet}{article}{
	author = {Samet, Iddo},
	journal = {Geom. Topol.},
	number = {2},
	pages = {1113--1147},
	publisher = {MSP},
	title = {Betti numbers of finite volume orbifolds},
	url = {https://doi.org/10.2140/gt.2013.17.1113},
	volume = {17},
	date = {2013}
}

\bib{SenskaDissertation}{article}{
    author       = {Senska, Hartwig},
    year         = {2019},
    title        = {Topologie und Volumen in negativer Kr\"ummung},
    doi          = {10.5445/IR/1000096617},
    publisher    = {{Karlsruhe}},
    school       = {Karlsruher Institut f\"ur Technologie (KIT)},
    language     = {German}
}

\end{biblist}
\end{bibdiv}

\end{document}